\theoremstyle{plain}
\newtheorem{theo}{Theorem}[section]
\crefname{theo}{Theorem}{Theorems}
\Crefname{theo}{Theorem}{Theorems}
\newtheorem{prop}[theo]{Proposition}
\crefname{prop}{Proposition}{Propositions}
\Crefname{prop}{Proposition}{Propositions}
\newtheorem{lem}[theo]{Lemma}
\crefname{lem}{Lemma}{Lemmas}
\Crefname{lem}{Lemma}{Lemmas}
\newtheorem{cor}[theo]{Corollary}
\crefname{cor}{Corollary}{Corollaries}
\Crefname{cor}{Corollary}{Corollaries}
\crefname{claim}{Claim}{Claims}
\Crefname{claim}{Claim}{Claims}
\crefname{property}{Property}{Properties}
\Crefname{property}{Property}{Properties}
\crefname{problem}{Problem}{Problems}
\Crefname{problem}{Problem}{Problems}
\theoremstyle{definition}
\newtheorem{defi}[theo]{Definition}
\crefname{defi}{Definition}{Definitions}
\Crefname{defi}{Definition}{Definitions}
\crefname{notation}{Notation}{Notations}
\Crefname{notation}{Notation}{Notations}
\crefname{convention}{Convention}{Conventions}
\Crefname{convention}{Convention}{Conventions}
\crefname{cond}{Condition}{Conditions}
\Crefname{cond}{Condition}{Conditions}
\crefname{assum}{Assumption}{Assumptions}
\Crefname{assum}{Assumption}{Assumptions}
\theoremstyle{remark}
\newtheorem{rem}[theo]{Remark}
\crefname{rem}{Remark}{Remarks}
\Crefname{rem}{Remark}{Remarks}
\crefname{ex}{Example}{Examples}
\Crefname{ex}{Example}{Examples}
\crefname{section}{Section}{Sections}
\Crefname{section}{Section}{Sections}
\crefname{subsection}{Subsection}{Subsections}
\Crefname{subsection}{Subsection}{Subsections}
\crefname{figure}{Figure}{Figures}
\Crefname{figure}{Figure}{Figures}
\newtheorem*{acknowledgment}{Acknowledgment}
\newcommand{\Z}{\mathbb{Z}}
\newcommand{\R}{\mathbb{R}}
\newcommand{\C}{\mathbb{C}}
\newcommand{\quat}{\mathbb{H}}
\newcommand{\Q}{\mathbb{Q}}
\newcommand{\fraks}{\mathfrak{s}}
\newcommand{\frakt}{\mathfrak{t}}
\newcommand{\tilR}{\tilde{\mathbb{R}}}
\newcommand{\ind}{\mathop{\mathrm{ind}}\nolimits}
\newcommand{\wt}{\widetilde}
\newcommand{\al}{\alpha}
\def\cU{\mathcal{U}}
\def\Pin{\operatorname{Pin}}
\def\s{\mathfrak{s}}
\def\t{\text}
\def\C{\mathbb{C}}
\def\D{\cancel{\partial}}
\def\spinc{\text{spin}^c}
\def\wt{\widetilde}
\newcommand{\G}{\mathcal G}
\newcommand{\pr}{\text{pr}}
\newcommand{\Om}{\Omega}
\newcommand{\del}{\partial}
\newcommand{\Ker}{\mathop{\mathrm{Ker}}\nolimits}
\newcommand{\Coker}{\mathop{\mathrm{Coker}}\nolimits}
\newcommand{\im}{\mathop{\mathrm{Im}}\nolimits}
\newcommand{\LE}{\mathcal{LE}}
\newcommand{\SWF}{\mathrm{SWF}}
\title[PSC and and homology cobordism invariants]{Positive scalar curvature and homology cobordism invariants}
\author{Hokuto Konno}
\address{Graduate School of Mathematical Sciences, the University of Tokyo, 3-8-1 Komaba, Meguro, Tokyo 153-8914, Japan}
\email{konno@ms.u-tokyo.ac.jp}
\author{Masaki Taniguchi}
\address{2-1 Hirosawa, Wako, Saitama 351-0198, Japan}
\email{masaki.taniguchi@riken.jp}
\begin{document}

\maketitle

\begin{abstract}
We determine the local equivalence class of the Seiberg--Witten Floer stable homotopy type of 
a spin rational homology 3-sphere $Y$ embedded into a spin rational homology $S^{1} \times S^{3}$ with a positive scalar curvature metric so that $Y$ generates the third homology.
The main tool of the proof is a relative Bauer--Furuta-type invariant on a periodic-end $4$-manifold.
As a consequence, we give obstructions to positive scalar curvature metrics on spin rational homology $S^{1} \times S^{3}$, typically described as the coincidence of various Fr{\o}yshov-type invariants.
This coincidence also yields alternative proofs of two known obstructions by Jianfeng Lin and by the authors for the same class of $4$-manifolds.
\end{abstract}

\tableofcontents

\section{Introduction}

\subsection{Seiberg--Witten Floer stable homotopy type and local equivalence}
Manolescu's Seiberg--Witten Floer stable homotopy type~\cite{Ma03} is a space-valued Floer theoretic invariant of a rational homology $3$-sphere equipped with a spin$^{c}$ structure, and recovers the monopole Floer homology defined by Kronheimer and Mrowka~\cite{KM07} for this class of $3$-manifolds~\cite{LM18}.
Therefore, in principle, the Seiberg--Witten Floer stable homotopy type contains all Floer-theoretic information from Seiberg--Witten theory for rational homology $3$-spheres.

In this paper, we will consider a spin rational homology $3$-sphere $(Y,\frakt)$ embedded into a spin $4$-manifold $(X,\fraks)$ with the rational homology of $S^{1} \times S^{3}$ so that the fundamental class of $Y$ generates $H_{3}(X;\Z)$.
The main theorem of this paper states that, if $X$ admits a metric with positive scalar curvature (PSC), we can determine the Seiberg--Witten Floer stable homotopy type of such $(Y,\frakt)$, denoted by $\SWF(Y,\frakt)$, up to the local equivalence relation explained below.
This result gives a strong obstruction to PSC metrics of spin rational homology $S^{1} \times S^{3}$, and this is the authors' original motivation for this study.
To the same class of $4$-manifolds, there are two known obstructions based on Seiberg--Witten theory, Jianfeng Lin's obstruction~\cite{Lin19} and the authors' obstruction~\cite{KT20} explained later, and the main theorem of this paper recovers both of them.

To motivate to consider the local equivalence relation, let us recall several homology cobordism invariants from Seiberg--Witten theory.
Applying various equivariant ordinary/generalized cohomologies to the Seiberg--Witten Floer stable homotopy type, many numerical homology cobordism invariants can be extracted, such as, the Fr{\o}yshov invariant~\cite{Fr96,Fr10}, which we denote by $\delta$ following \cite{Ma16}, Manolescu's invariants $\alpha, \beta, \gamma, \kappa$ \cite{Ma14,Ma16}, and 
Stoffregen's invariants $\overline{\delta}, \underline{\delta}$~\cite{Sto171}.
These invariants have different applications, for example:
The Fr{\o}yshov invariant $\delta$ was used to extend Donaldson's diagonalization theorem~\cite{Do83} to negative-definite $4$-manifolds with boundary~\cite{Fr96,Fr10,Ma03}.
Manolescu used the invariant $\beta$ to disprove the triangulation conjecture~\cite{Ma16}, and used $\kappa$ to extend Furuta's 10/8-inequality~\cite{Fu01} to spin $4$-manifolds with boundary \cite{Ma14}.
Stoffregen's invariants $\overline{\delta}, \underline{\delta}$ should correspond, respectively, to $\overline{d}, \underline{d}$ in involutive Heegaard Floer homology~\cite{HM17}, using $\Z/4$-equivariant ordinary cohomology.


These invariants $\alpha, \beta, \gamma, \delta,\overline{\delta}, \underline{\delta}, \kappa$ are spin rational homology cobordism invariants, and obtained from $\SWF(Y,\frakt)$ described above.
However, these invariants factor through a weaker invariant than $\SWF(Y,\frakt)$, the {\it local equivalence class} of $\SWF(Y,\frakt)$, defined by Stoffregen~\cite{Sto20}.
The local equivalence is an equivalence relation on a certain class of spaces including $\SWF(Y,\frakt)$ for rational homology $3$-spheres $Y$, and this is an abstraction of a relation between $\SWF(Y_{0}, \frakt_{0})$ and $\SWF(Y_{1}, \frakt_{1})$ for $(Y_{0}, \frakt_{0})$ and $(Y_{1}, \frakt_{1})$ which are spin rational homology cobordant to each other.
To summarize this situation,
let us denote by $\Theta_{\Z}^{3}$ the $3$-dimensional homology cobordism group, and 
denote by $\Theta_{\Q, \rm spin}^{3}$ the $3$-dimensional spin rational homology cobordism group.
Namely, an element of $\Theta_{\Q, \rm spin}^{3}$ is the equivalence class $[(Y,\frakt)]$ of a spin rational homology $3$-sphere, and the equivalence relation is given by a spin rational homology cobordism.
Stoffregen~\cite{Sto20} introduced the {\it local equivalence group} $\LE$, which consists of the local equivalence classes of certain spaces modeled on $\SWF(Y,\frakt)$.
Then one has group homomorphisms
\[
\Theta_{\Z}^{3} \to \Theta_{\Q, \rm spin}^{3} \to \LE.
\]
For a spin rational homology $3$-sphere $(Y,\frakt)$, the local equivalence class $[\SWF(Y,\frakt)]$ is valued in $\LE$, and
the above numerical invariants $\alpha, \beta, \gamma, \delta,\overline{\delta}, \underline{\delta}, \kappa$ factor through $\LE$, such as $\alpha(Y,\frakt) =\alpha([(Y,\frakt)]) = \alpha([\SWF(Y,\frakt)])$:
\[
\Theta_{\Q, \rm spin}^{3} \to \LE
\xrightarrow{\alpha, \beta, \gamma, \delta,\overline{\delta}, \underline{\delta}, \kappa} \Q.
\]

\subsection{Main theorem}

As described, the local equivalence class $[\SWF(Y,\frakt)] \in \LE$ of $\SWF(Y,\frakt)$ is, so far at least,  a candidate of the `universal' Seiberg--Witten theoretic homology cobordism invariant of $(Y,\frakt)$:
it contains information of all known homology cobordism invariants obtained from Seiberg--Witten theory.
The main theorem of this paper determines $[\SWF(Y,\frakt)]$ when $Y$ is embedded into a spin rational homology $S^1\times S^3$ admitting a PSC metric so that $Y$ generates $H_3(X;\Z)$:

\begin{theo}
\label{real main}
Let $(X,\fraks)$ be an oriented spin rational homology $S^1\times S^3$, and $(Y,\frakt)$ be an oriented spin rational homology $3$-sphere.
Suppose that $(Y,\frakt)$ is a cross-section of $(X,\fraks)$, i.e. $Y$ is embedded into $X$ so that it represents a fixed generator of $H_3(X;\Z)$, and that $\fraks|_{Y}$ is isomorphic to $\frakt$.
Assume that $X$ admits a PSC metric.
Then the local equivalence class of $\SWF(Y,\frakt)$ is given by
\begin{align}
\label{eq: main theo determination}
[\SWF(Y,\frakt)] 
= \left[ \left( S^{0},0,-\frac{\lambda_{SW} (X, \s)}{2}\right) \right].
\end{align}
In particular, for an arbitrary spin rational homology cobordism invariant which factors through $\LE$, the invariant of $(Y,\frakt)$ coincides with the invariant of the right-hand side of \eqref{eq: main theo determination}.
\end{theo}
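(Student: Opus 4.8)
The plan is to read off the local equivalence class of $\SWF(Y,\frakt)$ from a relative Bauer--Furuta-type invariant of the $4$-manifold obtained by cutting $X$ along $Y$. Write $W$ for $X$ cut open along $Y$; since $[Y]$ generates $H_{3}(X;\Z)$, this $W$ is a spin cobordism from $(Y,\frakt)$ to itself with the rational homology of a product, and stacking infinitely many copies gives a spin $4$-manifold $Z_{+}=W\cup_{Y}W\cup_{Y}\cdots$ with a single end modeled on the infinite cyclic cover $\widetilde{X}$ of $X$ and with $\partial Z_{+}\cong Y$. Because positive scalar curvature is a local condition, any PSC metric on $X$ pulls back to a PSC metric on $\widetilde{X}$ which is automatically periodic, so we may equip $Z_{+}$ with a metric whose periodic end has positive scalar curvature. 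The first step is to set up Seiberg--Witten theory on $Z_{+}$ with this periodic end, using the Fredholm and exponential-decay theory for end-periodic operators (Taubes; Mrowka--Ruberman--Saveliev): PSC on the end forces every finite-energy solution of the (possibly perturbed) equations to be reducible and exponentially decaying near infinity, which supplies the compactness needed for finite-dimensional approximation on the non-compact manifold $Z_{+}$.

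The central step is the construction of the relative Bauer--Furuta-type invariant of the pair $(Z_{+},Y)$: a stable $S^{1}$-equivariant map
\[
\psi_{+}\colon \Sigma^{m}S^{0}\longrightarrow \SWF(Y,\frakt),
\]
where $m$ records the relevant suspension data, together with the analogous map $\psi_{-}$ coming from the other side of the cut, i.e. from the orientation reversal $-Z_{+}$ (whose boundary is $-Y$), dualized via $\SWF(-Y,\frakt)\simeq\SWF(Y,\frakt)^{\vee}$ to a map $\SWF(Y,\frakt)\to\Sigma^{-m}S^{0}$. The datum $m$ is governed entirely by the linear part of the equations on $Z_{+}$: the end-periodic index of the spin Dirac operator and the ($b^{+}$-type, here trivial) contribution of $d^{+}\oplus d^{*}$. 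That Dirac index is precisely the correction term $w(X,\fraks)$ in the Mrowka--Ruberman--Saveliev formula $\lambda_{SW}(X,\fraks)=\#\mathcal{M}_{\mathrm{irr}}(X)-w(X,\fraks)$; since PSC makes $\mathcal{M}_{\mathrm{irr}}(X)$ empty, we obtain $w(X,\fraks)=-\lambda_{SW}(X,\fraks)$, and the $\C$-linearity of the Dirac operator together with the standard grading conventions turns this into the rational shift $-\frac{\lambda_{SW}(X,\fraks)}{2}$ appearing in \eqref{eq: main theo determination}.

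The last step is to check that $\psi_{\pm}$ are \emph{local} maps in Stoffregen's sense, i.e. that they restrict to $S^{1}$-equivariant homotopy equivalences on the fixed-point subspaces. This is where PSC is used most forcefully: on the periodic end there are no irreducible configurations and the linear operator is invertible at infinity, so the full Seiberg--Witten map on $Z_{+}$ is an $S^{1}$-equivariantly compact perturbation of its linear part, and on the reducible locus the approximated map agrees up to equivariant homotopy with the elementary linear model, which is manifestly a local equivalence onto $S^{0}$ after the shift. Feeding $\psi_{+}$ and $\psi_{-}$ into the definition of $\LE$ then yields local maps $\left(S^{0},0,-\frac{\lambda_{SW}(X,\fraks)}{2}\right)\to[\SWF(Y,\frakt)]$ and back, so the two classes dominate each other in $\LE$ and hence coincide, which is \eqref{eq: main theo determination}; the final sentence of the theorem is then immediate from the definition of an invariant factoring through $\LE$.

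I expect the main obstacle to be the construction and analytic control of this relative Bauer--Furuta invariant on the non-compact end-periodic manifold $Z_{+}$: establishing uniform exponential decay and compactness for the perturbed Seiberg--Witten equations on an end that is only \emph{periodic} rather than a metric cylinder, so that finite-dimensional approximation produces a well-defined stable homotopy class independent of the choices (metric on the periodic end, perturbation, truncation), and then nailing the index/grading bookkeeping so that the shift is exactly $-\frac{\lambda_{SW}(X,\fraks)}{2}$. A secondary difficulty is obtaining both directions of the local equivalence rather than only an inequality of Fr\o yshov-type invariants; this needs the duality/gluing argument relating the two end-periodic pieces determined by the two sides of $Y\subset X$, and care that their grading data are genuinely opposite.
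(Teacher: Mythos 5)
Your proposal follows essentially the same route as the paper: cut $X$ along $Y$, stack copies of $W$ to form a half-periodic-end $4$-manifold carrying a PSC periodic end, build a relative Bauer--Furuta-type invariant on it via finite-dimensional approximation (using end-periodic Fredholm/exponential-decay theory to get compactness), read off the desuspension shift from the end-periodic Dirac index via the Mrowka--Ruberman--Saveliev formula with the irreducible count killed by PSC, and then produce a local map in the reverse direction from the other half-periodic piece using duality. The paper implements the reverse direction by analyzing $W[0,\infty]$ directly (rather than $-Z_+$) together with the Manolescu duality argument, and it needs the extra orientation-reversal identity $\lambda_{SW}(-X,\fraks)=-\lambda_{SW}(X,\fraks)$ (a consequence of PSC) in the index bookkeeping — a small point you did not make explicit — but the overall strategy, key lemmas, and conclusion match.
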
 


Here $\lambda_{SW}(X, \fraks)$ is the Casson-type invariant defined by the Mrowka--Ruberman--Saveliev~\cite{MRS11} for an integral homology $S^{1} \times S^{3}$, which was later generalized for a rational homology $S^{1} \times S^{3}$ by J.~Lin--Ruberman--Saveliev~\cite{LRS17}.
Recall that an element of $\LE$ is expressed as the class of a triple $(Z,m,n)$, where $Z$ is  a space of type SWF~\cite[Definition~2.7]{Ma16}, and $m \in \Z$, $n \in \Q$.

\begin{rem}\label{ooo}In this paper, we developed Seiberg--Witten theory for 4-manifolds with periodic ends to prove \cref{real main}.
But we expect that an alternative proof of \cref{real main} without using Seiberg--Witten theory for 4-manifolds with periodic ends could be given by using Schoen--Yau's argument \cite{SY86} combined with a kind of gluing theorems for relative Bauer--Furuta invariants \cite{KLS18, KLS'18, SS21}.
\end{rem}

\subsection{Obstructions to PSC metrics}
\label{subsection: Obstructions to PSC metrics}

Now we regard \cref{real main} as an obstruction to PSC metrics on homology $S^{1} \times S^{3}$, and compare this with known obstructions on PSC metrics for the same class of $4$-manifolds.
We can extract from \cref{real main} convenient obstructions to PSC metrics, and moreover that \cref{real main} provides a systematic way to recover prior results.

Recall that it is well-understood which rational homology $3$-spheres admit PSC metrics: only connected sums of spherical $3$-manifolds.
Rational homology $S^{1} \times S^{3}$ is a class of $4$-manifold that may be seen to be closed to rational homology $3$-sphere, but it is not easy to rule out the existence of PSC metrics on such $4$-manifolds.
In dimension $4$, the Seiberg--Witten invariant is known as a powerful obstruction to PSC metric, 
but it cannot be used to rational homology $S^{1} \times S^{3}$, since the Seiberg--Witten invariant is not well-defined for such $4$-manifolds.
J.~Lin recently made a breakthrough in this situation:
he gave the first obstruction to PSC metric based on Seiberg--Witten theory for integral homology $S^{1} \times S^{3}$ in \cite{Lin19}, and later this result was generalized by himself with Ruberman and Saveliev to any rational homology $S^{1} \times S^{3}$ in \cite{LRS17}.
J.~Lin's obstruction is described as follows: under the same assumption with \cref{real main}, one has the equality
\begin{align}
\label{eq: Lin formula}
\delta(Y, \frakt) = \lambda_{SW}(X, \fraks).
\end{align}
Using \cref{real main}, we can give an alternative proof of J.~Lin's formula \eqref{eq: Lin formula}, and further generalize it to various Fr{\o}yshov-type invariants:

\begin{cor}
\label{main cor}
Let $(X,\fraks)$ be an oriented spin rational homology $S^1\times S^3$, and $(Y,\frakt)$ be an oriented spin rational homology $3$-sphere.
Suppose that $(Y,\frakt)$ is a cross-section of $(X,\fraks)$.
Assume that $X$ admits a PSC metric.
Then we have
\begin{align}
\label{eq: 6 eq}
\alpha(Y,\frakt) = \beta(Y,\frakt) = \gamma(Y,\frakt) = \delta(Y,\frakt) = \overline{\delta}(Y,\frakt)= \underline{\delta}(Y,\frakt) = \kappa(Y,\frakt) = \lambda_{SW} (X, \s).
\end{align}
\end{cor}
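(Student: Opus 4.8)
The proof of \cref{main cor} will be a formal consequence of \cref{real main}. The plan is: first, observe that the hypotheses of \cref{main cor} are precisely those of \cref{real main}, so that $[\SWF(Y,\frakt)] = \left[\left(S^{0},0,-\frac{\lambda_{SW}(X,\fraks)}{2}\right)\right]$ in $\LE$; next, recall, as summarized in the introduction, that each of $\alpha,\beta,\gamma,\delta,\overline{\delta},\underline{\delta},\kappa$ factors through $\LE$; and finally, evaluate each of these seven invariants on the model triple $\left(S^{0},0,n\right)$ with $n=-\lambda_{SW}(X,\fraks)/2$.

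For the last step, I would first note that the trivial space of type SWF $S^{0}$ (trivial $\Pin(2)$-action, fixed-point set all of $S^{0}$) represents $\SWF(S^{3})$ at weights $(0,0)$, and that every one of $\alpha,\beta,\gamma,\delta,\overline{\delta},\underline{\delta},\kappa$ vanishes on $\SWF(S^{3})$: this is a direct computation from the definitions, since the relevant $\Pin(2)$-, $S^{1}$-, or $\Z/4$-equivariant ordinary cohomology (resp.\ $\Pin(2)$-equivariant $K$-theory) of $S^{0}$ has no ``gap''. It then remains to track how each invariant responds to a shift of the rational weight in the triple. Since all seven are normalized so as to descend along $\Theta^{3}_{\Q,\mathrm{spin}} \to \LE$, their dependence on the rational weight must be the same: increasing it by $c$ changes each invariant by $-2c$; equivalently, the composite $\Q \cong \{[(S^{0},0,n)] : n \in \Q\} \hookrightarrow \LE \xrightarrow{\mu} \Q$ is $n \mapsto -2n$ for every $\mu \in \{\alpha,\beta,\gamma,\delta,\overline{\delta},\underline{\delta},\kappa\}$. (Alternatively, one may invoke Lin's formula \eqref{eq: Lin formula} to fix the constant in the $\delta$-case and use the grading comparison to transport it to the others; conversely, the present computation gives an independent proof of \eqref{eq: Lin formula}.) Substituting $n=-\lambda_{SW}(X,\fraks)/2$ then yields \eqref{eq: 6 eq}.

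The step I expect to be the main obstacle is not conceptual but a matter of bookkeeping: pinning down consistently the grading and normalization conventions for the seven invariants as they appear in \cite{Ma16}, \cite{Sto171}, \cite{Sto20}, and reconciling them with the normalization of $\lambda_{SW}(X,\fraks)$ for a \emph{rational} (as opposed to integral) homology $S^{1}\times S^{3}$ from \cite{LRS17} --- including the index-theoretic correction term --- so that the factor of $2$ comes out uniformly across all seven. Once these conventions are fixed, the argument is purely formal, requiring no Seiberg--Witten input beyond \cref{real main}.
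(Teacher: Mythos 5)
Your proposal is correct and takes essentially the same route as the paper: invoke \cref{real main} to pin down $[\SWF(Y,\frakt)] = [(S^0,0,-\lambda_{SW}(X,\fraks)/2)]$ and then read off all seven invariants on that triple; the paper compresses this to a one-line ``by the definitions'' remark, which is exactly what your vanishing-plus-shift computation makes explicit. One small caution: the claim that all seven invariants shift by $-2c$ under a rational-weight shift of $c$ does \emph{not} follow from the mere fact that they descend along $\Theta^3_{\Q,\mathrm{spin}} \to \LE$ --- two functions can both factor through a quotient without agreeing on how they depend on a chosen coordinate of the target --- but it does follow directly because each of $\alpha,\beta,\gamma,\delta,\overline\delta,\underline\delta,\kappa$ is \emph{defined} as (an equivariant-cohomology/K-theory quantity of $Z$, depending on $m$) minus $2n$, so the dependence on the rational weight $n$ is built in and uniform by construction.
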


\begin{proof}
By the definition of $\alpha, \beta, \gamma, \delta,\overline{\delta}, \underline{\delta}, \kappa$ \cite{Ma16,Ma14,Sto171}, it is easy to see that the values of these invariants for the right-hand side of \eqref{eq: main theo determination} are given by $\lambda_{SW}(X,\fraks)$.
Therefore the \lcnamecref{main cor} directly follows from \cref{real main}.
\end{proof}

Note that, by \cref{main cor}, we can replace $\lambda_{SW}(X,\fraks)$ in the right-hand side of \eqref{eq: main theo determination} with various invariants of $(Y,\frakt)$.

An obvious consequence of \cref{main cor} is:

\begin{cor}
\label{cor: glued}
Let $Y$ be an oriented homology 3-sphere. 
Suppose that at least two of $\alpha(Y), \beta(Y), \gamma(Y), \delta(Y),\overline{\delta}(Y), \underline{\delta}(Y), \kappa(Y)$ do not coincide with each other.
Then, for any homology cobordism $W$ from $Y$ to itself, the homology $S^1\times S^3$ obtained from $W$ by gluing the boundary components does not admit a PSC metric. 
\end{cor}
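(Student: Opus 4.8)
The plan is to deduce \cref{cor: glued} immediately from \cref{main cor}; the only work is to package $W$ into a closed $4$-manifold to which \cref{main cor} applies.

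First I would fix an orientation-compatible diffeomorphism between the two boundary components of $W$ (each a copy of $Y$) and let $X$ be the resulting closed oriented $4$-manifold, that is, $X = W \cup_{Y \sqcup Y} (Y \times [0,1])$. A Mayer--Vietoris computation, using the hypothesis that both inclusions $Y \hookrightarrow W$ induce isomorphisms on homology, shows $H_{*}(X;\Z) \cong H_{*}(S^{1} \times S^{3};\Z)$, so $X$ is an integral, hence rational, homology $S^{1} \times S^{3}$. Since $Y$ is an integral homology sphere, $H^{2}(W;\Z/2) = 0$ and $Y$ carries a unique spin structure; therefore $W$ is spin and the two boundary spin structures agree under the gluing, so they assemble into a spin structure $\fraks$ on $X$ restricting to the unique spin structure $\frakt$ on $Y$. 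Finally, the image in $X$ of a parallel copy of $Y$ generates $H_{3}(X;\Z) \cong \Z$, so $(Y,\frakt)$ is a cross-section of $(X,\fraks)$ in the sense of \cref{real main}.

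Next I would argue by contradiction: suppose $X$ admits a PSC metric. Then \cref{main cor}, applied to $(X,\fraks)$ and $(Y,\frakt)$, gives
\[
\alpha(Y) = \beta(Y) = \gamma(Y) = \delta(Y) = \overline{\delta}(Y) = \underline{\delta}(Y) = \kappa(Y) = \lambda_{SW}(X,\fraks);
\]
in particular all seven of these invariants of $Y$ coincide, contradicting the hypothesis that at least two of them are distinct. Hence $X$ admits no PSC metric, as claimed.

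I do not expect any genuine obstacle: the whole content is carried by \cref{main cor}, and the rest is routine. The one point deserving a moment's care is the spin bookkeeping in the construction of $X$ --- verifying that the gluing respects the spin structure and that $X$ is spin --- but this is immediate from the vanishing of $H^{2}(W;\Z/2)$ and the uniqueness of the spin structure on an integral homology sphere.
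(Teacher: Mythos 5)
Your proof is correct and takes the same route the paper intends: the paper states \cref{cor: glued} as an obvious consequence of \cref{main cor} without spelling out the details, and you have simply filled in the routine verifications (that the glued manifold $X$ is a spin integral homology $S^1\times S^3$ with $(Y,\frakt)$ as a cross-section) before invoking \cref{main cor} to force all seven invariants to coincide. One small wording nit: the vanishing $H^2(W;\Z/2)=0$ follows from $W$ being a homology cobordism (so $H_1(W)=H_2(W)=0$ and UCT applies), not directly from $Y$ being a homology sphere; the conclusion is the same, but the justification as phrased skips that step.
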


Here we drop the unique spin structure from our notation for (integral) homology $3$-spheres.

J.~Lin~\cite{Lin19} and J.~Lin--Ruberman--Saveliev~\cite{LRS17} used monopole Floer homology to establish the obstruction \eqref{eq: Lin formula}.
Morally, our argument in this paper can be thought of as a stable cohomotopy version of J.~Lin's argument in \cite{Lin19}.


After J.~Lin's work, the authors~\cite{KT20} gave another obstruction based on a 10/8-type inequality, described in \cref{theo: first KT}.
Using \cref{main cor} combined with Manolescu's relative 10/8-inequality~\cite{Ma14},
we can give an alternative proof of the authors' previous result (with a minor change):

\begin{cor}[\cite{KT20}]
\label{theo: first KT}
Let $(X,\fraks), (Y,\frakt)$ be as in \cref{real main}.
Take a compact smooth spin $4$-manifold $M$ bounded by $(Y,\frakt)$.
Suppose that $(Y,\frakt)$ is a cross-section of $(X,\fraks)$.
Assume that $X$ admits a PSC metric.
Then we have
\begin{align}
\label{eq first KT}
b^{+}(M) \geq -\frac{\sigma(M)}{8} - \delta(Y,\frakt)-1.
\end{align}
\end{cor}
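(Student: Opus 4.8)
The plan is to deduce \eqref{eq first KT} purely formally from \cref{main cor} and Manolescu's relative $10/8$-inequality~\cite{Ma14}, with no new input beyond what has already been established; in particular, the periodic-end Seiberg--Witten theory developed in this paper enters only through \cref{real main} and its corollary. First I would record that $M$ is a compact smooth spin $4$-manifold whose boundary is the rational homology $3$-sphere $(Y,\frakt)$ (with the spin structure $\fraks|_{\del M}$ isomorphic to $\frakt$), so that Manolescu's relative $10/8$-inequality applies directly to $M$. In the orientation and normalization conventions used here it reads
\[
b^{+}(M) \;\geq\; -\frac{\sigma(M)}{8} - \kappa(Y,\frakt) - 1,
\]
the essential point being that the correction term is exactly Manolescu's invariant $\kappa$ of the boundary, one of the numerical invariants appearing in \cref{main cor}, which moreover factors through $\LE$.

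Next I would invoke \cref{main cor}: since $(Y,\frakt)$ is a cross-section of $(X,\fraks)$ and $X$ admits a PSC metric, we have $\kappa(Y,\frakt) = \delta(Y,\frakt)$ (indeed both equal $\lambda_{SW}(X,\fraks)$). Substituting into the displayed inequality yields $b^{+}(M) \geq -\sigma(M)/8 - \delta(Y,\frakt) - 1$, which is \eqref{eq first KT}. If one prefers the version of Manolescu's inequality phrased with $\kappa(-Y,\frakt)$ in place of $\kappa(Y,\frakt)$, I would instead apply \cref{main cor} to the orientation reversal $(\bar{X},\fraks)$ --- which still carries a PSC metric and still has $-Y$ as a cross-section --- together with $\lambda_{SW}(\bar{X},\fraks) = -\lambda_{SW}(X,\fraks)$ and the additivity $\delta(-Y,\frakt) = -\delta(Y,\frakt)$, to identify $\kappa(-Y,\frakt) = -\delta(Y,\frakt)$, and then conclude as before.

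Since the argument is just the composition of two known results, I do not expect a genuine obstacle. The only delicate point --- and the reason for the qualifier ``with a minor change'' in the statement --- is the bookkeeping of sign and normalization conventions among \cite{Ma14}, \cref{main cor}, and the original formulation in \cite{KT20}: one must make sure that the additive constant $-1$ and the sign in front of $\delta(Y,\frakt)$ come out exactly as in \eqref{eq first KT}. Accordingly, a careful writeup should begin by fixing the precise form of the relative $10/8$-inequality being used, after which the deduction is a single substitution.
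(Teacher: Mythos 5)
Your argument is essentially identical to the paper's: apply Manolescu's relative $10/8$-inequality to get $b^{+}(M) \geq -\sigma(M)/8 - \kappa(Y,\frakt) - 1$, then use \cref{main cor} to replace $\kappa(Y,\frakt)$ by $\delta(Y,\frakt)$. The extra paragraph about orientation reversal is an unnecessary but harmless hedge.
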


\begin{proof}
Manolescu's relative 10/8-inequality, which is \cite[Theorem~1]{Ma14} generalized to a rational homology $3$-sphere (see \cite[Remark~2]{Ma14}), implies that
\[
b^{+}(M) \geq -\frac{\sigma(M)}{8} - \kappa(Y,\frakt) -1.
\]
Combining this with \eqref{eq: 6 eq}, we obtain \eqref{eq first KT}.
\end{proof}

\begin{rem}
The inequality \eqref{eq first KT} is slightly weaker than the original inequality given in \cite[Theorem~1.1]{KT20}.
The source of this difference is that, in \cite{KT20}, we used Furuta-Kametani's 10/8-type inequality~\cite{FK05} based on $KO$-theory, whereas Manolescu's inequality is based on $K$-theory.
\end{rem}

\subsection{Outline of the proof of the main theorem}
\label{subsec: Outline of the proof of the main theorem}
Here is an explanation of an outline of the proof of \cref{real main}.
The heart of this paper is, under the assumption of the existence of PSC metric on $X$, to consider finite-dimensional approximations of the Seiberg--Witten equations on a periodic-end $4$-manifold.
More precisely, we shall construct a relative Bauer--Furuta-type invariant over a half-periodic-end $4$-manifold
\[
W[-\infty,0] =  \cdots  \cup_{Y} W\cup_{Y} W \cup_{Y} W,
\]
along the spirit of Furuta~\cite{Fu01}, Bauer--Furuta~\cite{BF04}, and Manolescu~\cite{Ma03}. 
Here $W$ is the $4$-manifold defined by cutting $X$ open along $Y$, and the `left side' end is equipped with a periodic PSC metric and a neighborhood of the `right side' boundary is equipped with a product metric of the form $[0,1] \times Y$. Technically, the relative Bauer--Furuta invariant over such a non-compact 4-manifold is defined using the similar method given in \cite{IT20} which defines the relative Bauer--Furuta invariant for a certain class of 4-manifolds with conical end.

The key observation is that $W[-\infty,0]$ with such a periodic PSC metric on the end looks like a homology cobordism from $S^{3}$ to $Y$ from Seiberg--Witten theoretic point of view.
The relative Bauer--Furuta invariant over $W[-\infty,0]$ gives a local map from $\left[ \left( S^{0},0,-\lambda_{SW} (X, \s)/2\right) \right]$ to $\SWF(Y,\frakt)$.
The quantity $\lambda_{SW} (X, \s)$ emerges from the spin Dirac index over $W[-\infty,0]$, discussed in \cref{Dirac index on Winf}.

Similarly, by considering the relative Bauer--Furuta invariant over
\[
W[0, \infty] = W \cup_{Y} W\cup_{Y} W \cup_{Y} \cdots,
\]
we get a local map from $\SWF(Y,\frakt)$ to $\left[ \left( S^{0},0,-\lambda_{SW} (X, \s)/2\right) \right]$, and we can conclude that $\SWF(Y,\frakt)$ is locally equivalent to $\left[ \left( S^{0},0,-\lambda_{SW} (X, \s)/2\right) \right]$.

\subsection{Examples}

In \cref{section: Examples} we shall give examples of concrete 3-manifolds $Y$ to which we can apply the obstructions given in \cref{subsection: Obstructions to PSC metrics}.
Here let us exhibit a part of those examples.

As a consequence of his formula \eqref{eq: Lin formula},
J.~Lin proved in \cite[Corollary 1.3]{Lin19} that a homology $S^1\times S^3$ having a cross-section $Y$ with $\mu(Y) \neq \delta(Y) \mod 2$ does not admit a PSC metric.
Here $\mu(Y) \in \Z/2\Z$ denotes the Rohlin invariant.
For Seifert homology 3-spheres, we can get an `integer-valued lift' of this result by J.~Lin.
Moreover, also for linear combinations of Seifert homology 3-spheres of certain type, we can get an obstructions described in terms of some integer-valued invariants of certain 3-manifolds:

\begin{theo}The following statements hold: 
\begin{itemize}
\item[(i)]
Let $Y'$ be a Seifert homology 3-sphere such that
\[
 -\overline{\mu} (Y') \neq \delta(Y' ) , 
 \]
 where $\overline{\mu}$ is the Neumann--Siebenmann invariant for graph homology 3-spheres, introduced in \cite{N80,Si80}. 
 Let $Y$ be an oriented homology 3-sphere which is homology cobordant to $Y'$.
 Then, for any homology cobordism $W$ from $Y$ to itself, the 4-manifold obtained from $W$ by gluing the boundary components does not admit a PSC metric.   
 \item[(ii)] Let $Y_1, \cdots, Y_n$ be negative Seifert homology 3-spheres of projective type. Suppose that $\delta(Y_1) \leq \cdots \leq \delta(Y_n)$. Set $\wt{\delta}_i := \delta(Y_i) + \overline{\mu} (Y_i)$. Suppose that at least two of following four integers are distinct:
 \begin{align*}
 \sum_{i=1}^n {\delta} ( Y_i), \quad
 2 \lfloor \frac{\sum_{i=1}^n \wt{\delta}_i +1}{2}\rfloor -\sum_{i=1}^n \overline{\mu} ( Y_i),\\
 2\lfloor \frac{\sum_{i=1}^{n-1} \wt{\delta}_i +1}{2} \rfloor  -\sum_{i=1}^n \overline{\mu} ( Y_i), \quad 
 2\lfloor \frac{\sum_{i=1}^{n-2} \wt{\delta}_i +1}{2} \rfloor    -\sum_{i=1}^n \overline{\mu} ( Y_i).
  \end{align*}
Let $Y$ be an oriented homology 3-sphere which is homology cobordant to $Y_1 \# \cdots \# Y_n$. 
Then, for any homology cobordism $W$ from $Y$ to itself, the 4-manifold obtained from $W$ by gluing the boundary components  does not admit a PSC metric. 
 \end{itemize}
\end{theo}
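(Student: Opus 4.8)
The plan is to derive both statements from \cref{cor: glued}. Since $\alpha,\beta,\gamma,\delta,\overline{\delta},\underline{\delta},\kappa$ are homology cobordism invariants and $Y$ is homology cobordant to $Y'$ in (i) (resp.\ to $Y_1\#\cdots\#Y_n$ in (ii)), it suffices to show that at least two of these seven invariants take distinct values on $Y'$ (resp.\ on $Y_1\#\cdots\#Y_n$); \cref{cor: glued} then yields the asserted non-existence of PSC metrics on the glued-up $4$-manifolds. Thus the whole argument reduces to a computation of Floer-theoretic invariants of Seifert homology spheres and of their connected sums.

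For (i), I would invoke the known identification, for Seifert homology $3$-spheres, of the Neumann--Siebenmann invariant with one of Stoffregen's $\delta$-type invariants: for a negative Seifert homology $3$-sphere one has $\overline{\delta}(Y')=-\overline{\mu}(Y')$ (and $\underline{\delta}(Y')=-\overline{\mu}(Y')$ under the opposite orientation convention, so that in each case the quantity $\widetilde{\delta}$ below is nonnegative). This is the Seiberg--Witten transcription of Dai--Manolescu's computation of involutive Heegaard Floer homology for almost-rational plumbed $3$-manifolds, via the correspondence $\overline{\delta}\leftrightarrow\overline{d}$, $\underline{\delta}\leftrightarrow\underline{d}$ recalled in the introduction; alternatively one can cite Stoffregen's direct computation of the $\Pin(2)$-equivariant Seiberg--Witten Floer homology of Seifert fibrations. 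Granting this, the hypothesis $-\overline{\mu}(Y')\neq\delta(Y')$ says precisely that $\overline{\delta}(Y')\neq\delta(Y')$, which exhibits the required pair of distinct invariants and proves (i).

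For (ii), the extra ingredient is an explicit description of the local equivalence class $[\SWF(Y_i)]\in\LE$ when $Y_i$ is a negative Seifert homology $3$-sphere of projective type: it is a standard ``staircase'' local-equivalence class determined by the pair of integers $(\delta(Y_i),\overline{\mu}(Y_i))$, equivalently by $\delta(Y_i)$ together with the nonnegative integer $\widetilde{\delta}_i=\delta(Y_i)+\overline{\mu}(Y_i)$ measuring the height of the staircase. Since connected sum corresponds to the smash product in $\LE$, the class $[\SWF(Y_1\#\cdots\#Y_n)]$ is the smash product of these explicit classes, and evaluating $\alpha,\beta,\gamma,\delta,\overline{\delta},\underline{\delta},\kappa$ on it --- using Stoffregen's computation of Manolescu-type invariants of connected sums --- recovers exactly the four displayed integers: the first is $\delta$ of the connected sum (additive on these classes, since $\delta$ is a homomorphism out of the homology cobordism group), while the other three are the $\alpha$-, $\beta$-, $\gamma$-type invariants of the smash product, with the rounding operation $2\lfloor(x+1)/2\rfloor$ and the truncated sums $\sum_{i=1}^{n}$, $\sum_{i=1}^{n-1}$, $\sum_{i=1}^{n-2}$ reflecting the $\Pin(2)$-module structure of an $n$-fold smash product of projective-type modules. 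The hypothesis that two of the four integers are distinct then gives two distinct invariants of $Y_1\#\cdots\#Y_n$, and \cref{cor: glued} applies once more.

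I expect the main obstacle to be the connected-sum computation underlying (ii): one must (a) pin down the local equivalence class of a projective-type negative Seifert space with sufficient precision, and (b) carry out the $\Pin(2)$-equivariant bookkeeping for an arbitrary $n$-fold smash product of such classes so that the output matches the stated floor expressions --- in particular tracking the non-additivity of $\beta$ and $\gamma$ and the exact way the top towers of the smash product are successively lost. By contrast, part (i) follows immediately once the identification $\overline{\delta}(Y')=-\overline{\mu}(Y')$ for Seifert homology spheres is quoted.
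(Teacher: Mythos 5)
Your overall strategy — reduce everything to \cref{cor: glued} and the homology-cobordism invariance of the seven invariants, then pull in computations for Seifert spheres and their connected sums — is exactly the paper's strategy. Your treatment of (ii) is essentially the paper's proof: apply Stoffregen's connected-sum computation (\cref{St} in the paper) to identify the four displayed integers with $\delta, \alpha, \beta, \gamma$ of $Y_1 \# \cdots \# Y_n$, and then invoke \cref{cor: glued} whenever two of them differ. The detour through smash products in $\LE$ is unnecessary packaging (Stoffregen's theorem already gives the numerical output in closed form), but it is not wrong.

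For (i), however, your primary route has a genuine gap. You anchor the argument on the identity $\overline{\delta}(Y') = -\overline{\mu}(Y')$, derived by transcribing Dai--Manolescu's computation of $\overline{d}$ for almost-rational plumbed $3$-manifolds under the correspondence $\overline{\delta} \leftrightarrow \overline{d}$, $\underline{\delta} \leftrightarrow \underline{d}$. But the paper itself explicitly flags this correspondence as expected rather than proved (``Stoffregen's invariants $\overline{\delta}, \underline{\delta}$ \emph{should} correspond, respectively, to $\overline{d}, \underline{d}$''), so it cannot be quoted. Your fallback of ``citing Stoffregen's direct computation'' would then need to compute $\overline{\delta}$, but the result actually recorded in the paper (\cref{single}) gives $\beta(Y') = \gamma(Y') = -\overline{\mu}(Y')$ for negative fibrations and $\alpha(Y') = \beta(Y') = -\overline{\mu}(Y')$ for positive ones, not a computation of $\overline{\delta}$. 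The paper's argument uses precisely those equalities: the hypothesis $-\overline{\mu}(Y') \neq \delta(Y')$ immediately gives $\beta(Y') \neq \delta(Y')$ in the negative case and $\alpha(Y') \neq \delta(Y')$ in the positive case, and \cref{cor: glued} applies. Replacing your $\overline{\delta}$ with $\beta$ (resp.\ $\alpha$) and citing \cref{single} closes the gap; it also handles the positive-fibration case cleanly, which your ``opposite orientation convention'' remark leaves vague.
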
 
For the definition of projective Seifert homology 3-spheres, see \cref{section: Examples}.

\subsection{Outline of this paper}
We finish off this introduction with an outline of the contents of this paper.
The contents until \cref{section: Relative Bauer--Furuta type invariant} are devoted to construct the relative Bauer--Furuta invariant on the periodic-end 4-manifold $W[-\infty,0]$.
In \cref{section: Preliminaries} we give several notations related to infinite cyclic covering spaces of a $4$-manifold. We also review Fredholm theory for infinite cyclic covering spaces, Seiberg--Witten Floer homotopy types and notion of local equivalence. In \cref{section: Fredholm theories} we ensure Fredholm properties of elliptic operators on certain 4-manifolds with periodic end and boundary. 
We calculate cohomologies of the Atiyah--Hitchin--Singer operator on such non-compact 4-manifolds.
We also calculate the Dirac index on $W[-\infty,0]$ in \cref{Dirac index on Winf}.
In \cref{section: The boundedness result} we show a boundedness result which is needed to construct the relative Bauer--Furuta invariant. In \cref{section: Relative Bauer--Furuta type invariant} we construct the relative Bauer--Furuta invariant for the 4-manifolds $W[-\infty,0]$ with periodic end and boundary.
In \cref{section: The proof of main} we prove \cref{real main} along the idea explained in \cref{subsec: Outline of the proof of the main theorem}.
In \cref{section: Obstruction to embedding of 3-manifolds into 4-manifolds with PSC metric} we give a generalization of \cref{real main}, which is stated as an obstruction of embeddings of $3$-manifolds into $4$-manifolds admitting PSC metrics.
In \cref{section: Examples} we provide several families of examples of homology $S^1\times S^3$'s which cannot admit PSC metrics using \cref{real main}.

\begin{acknowledgment}
The authors would like to express their gratitude to the organizers and participants of Gauge Theory Virtual
for giving them an opportunity to reconsider their past work \cite{KT20}. The authors also wish to thank Nobuo Iida for discussing \cref{ooo} with us. 
The first author was partially supported by JSPS KAKENHI Grant Numbers 17H06461, 19K23412, and 21K13785.
The second author was supported by JSPS KAKENHI Grant Number 20K22319 and RIKEN iTHEMS Program.
\end{acknowledgment}

\section{Preliminaries} \label{section: Preliminaries}

\subsection{Notations}
\label{subsec: Notations}

In this \lcnamecref{subsec: Notations} we introduce several notations on periodic $4$-manifolds. 
Let $(X,\fraks)$ be an oriented spin rational homology $S^1 \times S^3$, i.e. a spin $4$-manifold whose rational homology is isomorphic to that of $S^1 \times S^3$.
Fix a Riemannian metric $g_X$ on $X$ and a generator of $H_3(X;\Z)$, denoted by $1\in H_3(X;\Z)$.
Note that $H_3(X;\Z)$ is isomorphic to $H^{1}(X;\Z)$, and hence to $\Z$.
Let $Y$ be an oriented rational homology $3$-sphere, and
assume that $Y$ is embedded into $X$ so that $[Y]=1$.
We call such $Y$ a {\it cross-section} of $X$.
Let $W_0$ be the rational homology cobordism from $Y$ to itself obtained by cutting $X$ open along $Y$.
The manifold $W_{0}$ is equipped with an orientation and a spin structure induced by those of $X$.
For $(m,n) \in (\{-\infty \}\cup \Z) \times (\Z\cup \{\infty\})$ with $m<n$,
we define the periodic $4$-manifold
\[
W[m,n]:= W_m \cup_Y W_{m+1} \cup_Y \dots \cup_YW_n,
\]
where $W_i$ is a copy of $W_0$ for each $i \in \Z$.
This $4$-manifold $W[m,n]$ is also equipped with an orientation and a spin structure as well as $W_{0}$.
The element of $H^1(X;\Z)$ corresponding to $1 \in H_3(X;\Z)$ via the Poincar\'e duality gives the isomorphism class of an infinite cyclic covering
\begin{align}\label{eq: zcov}
p:\widetilde{X} \to X
\end{align}
and an identification
\begin{align}\label{eq: zcov1}
\widetilde{X} \cong W[-\infty,\infty].
\end{align}

Via the identification \eqref{eq: zcov1}, let us think of $p$ as a map from $W[-\infty ,\infty]$ to $X$. Define the map $p_-:W[-\infty ,0] \to X$ as the restriction of $p$.
 We call an object defined on $W[-\infty ,0]$, such as connection, metric, bundle, and differential operator, a {\it periodic object} if the restriction of the object to $W[-\infty, 0]$ can be identified with the pull-back of an object on $X$ under $p_-$.
Considering the pull-back under $p_{-}$, the Riemannian metric $g_X$ on $X$ induces a Riemannian metric, denoted by $g_{W[-\infty ,0] }$, on $W[-\infty ,0] $.
Let $S^+, S^-$ be the positive/negative spinor bundles respectively over $W[-\infty ,0]$ with respect to the metric and the spin structure above.
Fixing a trivialization of the determinant line bundle of the spin structure on $W[-\infty ,0]$, we obtain the canonical reference connection $A_0$ on $W[-\infty ,0]$ corresponding to the trivial connection.
 
 To consider the weighted Sobolev norms on $W[-\infty, 0]$, fix a function  
\[
\tau : \wt{X} \to \R
\]
with $T^*\tau =\tau+ 1$, where $T: \wt{X} \to \wt{X}$ is the deck transform determined by $T(W_i)=W_{i-1}$. 
Note that $d\tau$ defined a cohomology class $[d\tau] \in H^1(X; \Z)$ which is equal to $1 \in H^1(X; \Z)$ corresponding to $1 \in H_3(X;\Z)$ via the Poincar\'e duality.

\begin{defi}
\label{defi: weighted}
Let $E$ be a periodic vector bundle on $W[-\infty, 0]$ with a periodic inner product.
For a fixed $k>0$ and $\delta \in \R$, we define the {\it weighted Sobolev norm} by
\[
\|f\|_{L^2_{k, \delta} (W[-\infty, 0])} := \| e^{\delta \tau} f\|_{L^2_k (W[-\infty, 0])} . 
\]
for a smooth comactly supported section $f$ of $E$.
Here we used a periodic metric and a periodic connection on $E$ to define the $L^{2}_{k}$-norm.
Let $L^2_{k, \delta} (E)$ denote the $L^2_{k, \delta}$-completion of compactly supported smooth sections of $E$.
\end{defi}

Note that the equivalence class of norms $\|-\|_{L^2_{k, \delta} (W[-\infty, 0])} $ does not depend on the choices of a periodic metric and a periodic connection on $E$.

\subsection{Fredholm theory on $\wt{X}$} \label{delta0}
\label{subsection review Fredholm}

In this \lcnamecref{subsection review Fredholm}
we review the Fredholm property of periodic elliptic operators on the infinite cyclic covering $\wt{X}$ developed by  C.~Taubes~\cite{T87}.
He showed that a periodic elliptic operator is Fredholm under some condition with respect to $L^2_{k,\delta}$-norms for generic $\delta \in \R$.
For the details, see \cite{T87}, or \cite[Subsection~2.1]{KT20}.

Let $\mathbb{D}= (D_i,E_i)$ be a periodic elliptic complex on $\wt{X}$, i.e. the complex 
 \begin{align} \label{comp}
0\to \Gamma(\wt{X};E_N) \xrightarrow{D_N} \Gamma(\wt{X};E_{N-1} )  \to \cdots \xrightarrow{D_1} \Gamma(\wt{X};E_0)\to 0 
\end{align}
consisting of first order periodic linear differential operators $D_i$ between periodic vector bundles $E_{i}$ on $\wt{X}$ with exact symbol sequence.
Here, for a vector bundle $E$, the notation $\Gamma (  \wt{X}, E)$ denotes the set of compactly supported smooth sections of $E$.
As well as \cref{defi: weighted},
define the weighted Sobolev norm on $\wt{X}$ by
 \[
 \| f \|_{L^2_{k,\delta}(\wt{X})} := \| e^{\tau \delta} f \|_{L^2_{k}(\wt{X})} 
 \]
using a periodic connection and a periodic metric. 
The complex \eqref{comp} gives rise to the complex of bounded operators 
 \begin{align}\label{elliop}
L^2_{k+N+1,\delta} (\wt{X};E_N) \xrightarrow{D_N} L^2_{k+N,\delta}  (\wt{X};E_{N-1} )  \to \cdots \xrightarrow{D_1} L^2_{k,\delta} (\wt{X};E_0) 
\end{align}
for each $k>0$ and $\delta \in \R$.

Note that, since the operators in \eqref{elliop} are periodic differential operators, there exist differential operators $\hat{\mathbb{D}}=(\hat{D}_i, \hat{E}_i)_{i=0 , \cdots , N} $ on $X$ such that $\mathbb{D}$ is given as the pull-back $p_-^*\hat{\mathbb{D}}$.

\begin{defi}
For $z \in \C$, define the complex $\hat{\mathbb{D}}(z)$ by 
\[
0 \to \Gamma(X;\hat{E}_N) \xrightarrow{\hat{D}_N(z)} \Gamma (X;\hat{E}_{N-1} )  \to \cdots \xrightarrow{\hat{D}_1(z)} \Gamma (X;\hat{E}_0) \to 0, 
\]
where the operators $\hat{D}_i(z): \Gamma(X;\hat{E}_i) \to \Gamma (X;\hat{E}_{i-1} )$ are defined by
\[
\hat{D}_i(z)(f):= e^{-\tau z} \hat{D}_i (e^{\tau z}f).
\]
\end{defi} 

 \begin{theo}[{\cite[Lemmas 4.3 and 4.5]{T87}}]\label{fred}
 Suppose that there exists $z_0 \in \C$ where the complex $\hat{\mathbb{D}}(z_0)$ is acyclic.
Then there exists a discrete subset $\mathcal{D}$ in $\R$ with no accumulation points such that the complex \eqref{elliop} is an acyclic complex for all $\delta$ in $\R \setminus \mathcal{D}$.
 \end{theo}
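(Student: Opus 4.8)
This is Taubes' Fredholm criterion for periodic elliptic complexes \cite[Lemmas~4.3 and~4.5]{T87}, and the strategy I would follow is the Fourier--Laplace (direct integral) decomposition along the infinite cyclic cover combined with the analytic Fredholm theorem on the closed manifold $X$. The first step is to package the acyclicity of $\hat{\mathbb D}(z)$ as invertibility of a single operator: for the elliptic complex $\hat{\mathbb D}(z)$ on the \emph{closed} manifold $X$, form the ``rolled-up'' operator $L(z):=\hat D(z)+\hat D(z)^{*}$ between the even and odd parts $\bigoplus_i \hat E_{2i}$ and $\bigoplus_i \hat E_{2i-1}$ (with respect to an auxiliary metric). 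Then $L(z)$ is elliptic on the closed $X$, hence Fredholm of index equal to the Euler characteristic of the complex, it depends holomorphically on $z\in\C$, and $\hat{\mathbb D}(z)$ is acyclic if and only if $L(z)$ is invertible (so in particular the index must vanish, which it does since the hypothesis gives one acyclic member). One also records that $L(z+2\pi i)$ is conjugate to $L(z)$ by the bundle automorphism induced by $e^{2\pi i\tau}$, which is $T$-invariant and descends to the classifying map $X\to S^1$ of the cover; hence the exceptional set below is $2\pi i\Z$-invariant.

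\textbf{The exceptional set in $\C$.} Let $\Sigma:=\{z\in\C : L(z)\text{ is not invertible}\}=\{z : \hat{\mathbb D}(z)\text{ is not acyclic}\}$. By hypothesis $z_0\notin\Sigma$. The analytic Fredholm theorem, applied to the holomorphic family $z\mapsto L(z)$ of Fredholm operators on the closed manifold $X$ (using that $L(z)^{-1}$ exists at $z_0$), shows that $\Sigma$ is a discrete subset of $\C$ with no accumulation point. By the $2\pi i\Z$-invariance from Step~1, $\Sigma$ meets each strip $\{a\le \operatorname{Re} z\le b\}$ in a set that, modulo $2\pi i\Z$, lives in the compact cylinder $[a,b]\times(\R/2\pi\Z)$ and is discrete and closed there, hence finite. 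Therefore $\mathcal D:=\operatorname{Re}(\Sigma)=\{\delta\in\R : (\delta+i\R)\cap\Sigma\neq\emptyset\}$ meets every bounded interval in a finite set, i.e.\ $\mathcal D$ is discrete in $\R$ with no accumulation points. This is the set asserted in the theorem.

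\textbf{Acyclicity for $\delta\notin\mathcal D$.} Fix $\delta\notin\mathcal D$, so that $\hat{\mathbb D}(\delta+i\theta)$ is acyclic for \emph{every} $\theta\in\R$, equivalently for every $\theta$ in the compact circle $\R/2\pi\Z$. The Fourier--Laplace transform along the deck group identifies, for each $j$, the weighted Sobolev space $L^2_{k+j,\delta}(\wt X;E_{\bullet})$ with the $L^2$-family over $\theta\in\R/2\pi\Z$ of the $L^2_{k+j}$-completions of the $\theta$-twisted bundles on $X$ (the twist being the flat connection whose monodromy around the generating loop corresponds to $e^{\delta+i\theta}$), and under this identification the periodic complex \eqref{elliop} becomes the multiplication family $\theta\mapsto\hat{\mathbb D}(\delta+i\theta)$. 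Since $\theta\mapsto L(\delta+i\theta)^{-1}$ is continuous on the compact circle and everywhere defined, its operator norm is bounded uniformly in $\theta$; elliptic estimates on the closed $X$, uniform in $\theta$, then promote this to a uniformly bounded fiberwise chain contraction of the twisted complexes. Integrating over $\theta$ and transforming back produces a bounded chain contraction of \eqref{elliop}, i.e.\ \eqref{elliop} is acyclic. (Reversely, if $\delta\in\mathcal D$ then some fiber $\hat{\mathbb D}(\delta+i\theta)$ has nontrivial cohomology and one expects \eqref{elliop} to fail even to be Fredholm, which is why these $\delta$ are excluded; this direction is not needed for the statement.)

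\textbf{Main obstacle.} The delicate point is the third step: making the Fourier--Laplace identification of the \emph{weighted} Sobolev spaces on the noncompact cover $\wt X$ precise --- in particular that the weight $e^{\delta\tau}$ is exactly absorbed into the twisting parameter, that $\wt X$ (an infinite cyclic cover, not a literal product) transforms correctly, and that the elliptic estimates on $X$ are uniform in the circle parameter $\theta$ so the synthesized inverse is genuinely bounded on $L^2_{k,\delta}$. Once this transform calculus is in place, Steps~1 and~2 are formal consequences of the analytic Fredholm theorem and the $2\pi i\Z$-periodicity. The whole argument is exactly the one carried out by Taubes in \cite[\S4]{T87}.
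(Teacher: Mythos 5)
The paper itself does not prove this statement; it cites \cite[Lemmas~4.3 and~4.5]{T87}, so what you are writing out is a reconstruction of Taubes' argument. Your overall strategy --- Fourier--Laplace decomposition along the deck group, reduction to the twisted family on the compact quotient $X$, an analytic Fredholm argument to isolate the exceptional weights, and then a uniformly bounded fiberwise chain contraction to synthesize acyclicity on $\wt X$ --- is the right one and does match the structure of Taubes' proof.

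There is, however, a genuine gap in your Steps~1--2. The rolled-up operator $L(z)=\hat D(z)+\hat D(z)^{*}$ is \emph{not} holomorphic in $z$. Each $\hat D_i(z)=\hat D_i+z\,\sigma_{\hat D_i}(d\tau)$ is affine-linear and in particular holomorphic, but the Hermitian formal adjoint picks up a conjugate:
\[
\hat D_i(z)^{*}=\hat D_i^{*}+\bar z\,\sigma_{\hat D_i}(d\tau)^{*},
\]
which is anti-holomorphic. Hence $L(z)$ mixes holomorphic and anti-holomorphic dependence whenever the complex has three or more terms, and the analytic Fredholm theorem for holomorphic families does not apply to it as you invoke it. For a two-term complex (i.e.\ a single elliptic operator, such as the Dirac operator) there is no adjoint piece, $L(z)=\hat D_1(z)$ is holomorphic, and your argument is fine; but the AHS and de~Rham complexes needed by \cref{allops} have more terms and are precisely the cases affected.

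The standard repair is to discard the rolled-up operator and work directly with the holomorphic family of differentials. Let $G_0$ be a Green's operator (chain contraction) for the acyclic complex $\hat{\mathbb D}(z_0)$, for instance $G_0=\hat D(z_0)^{*}L(z_0)^{-2}$. Then
\[
P(z):=\hat D(z)G_0+G_0\hat D(z)=I+K(z),\qquad K(z)=(z-z_0)\bigl(\sigma(d\tau)\,G_0+G_0\,\sigma(d\tau)\bigr),
\]
where $K(z)$ is a \emph{holomorphic} family of compact (in fact smoothing) operators with $K(z_0)=0$. The analytic Fredholm theorem now applies to $P(z)=I+K(z)$ and yields invertibility away from a discrete set $\Sigma$. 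Using $\hat D(z)^2=0$ one checks $P(z)\hat D(z)=\hat D(z)P(z)$, so $P(z)^{-1}$ commutes with $\hat D(z)$; consequently, if $\hat D(z)\alpha=0$ then $\alpha=P(z)^{-1}\hat D(z)G_0\alpha=\hat D(z)\bigl(P(z)^{-1}G_0\alpha\bigr)$, which gives acyclicity of $\hat{\mathbb D}(z)$ for $z\notin\Sigma$. With $\Sigma$ discrete and $2\pi i\Z$-invariant, the rest of your argument --- the definition of $\mathcal D=\operatorname{Re}(\Sigma)$ and the Fourier--Laplace synthesis of a bounded chain contraction over $\theta\in\R/2\pi\Z$ --- goes through as you wrote it.
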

 

\begin{defi}[\cite{RS07}]  We call $g_X$ an {\it admissible metric} on $X$ if the kernel of 
\[
D^+_{A_0} + f^* d\theta : L^2_k (X;S^+) \to   L^2_{k-1} (X;S^-)
\]
is zero, where the map $f:X\to S^1$ is a smooth classifying map of \eqref{eq: zcov}. 
\end{defi}
The admissibility condition does not depend on the choice of classifying map $f$.
One can show that every PSC metric on $X$ is an admissible metric (See (2) in \cite{RS07}). 

In \cite{KT20}, we confirmed that \cref{fred} can be used for differential operators appearing as the linearization of the Seiberg--Witten equations:
\begin{lem} [{\cite[Lemma~2.6]{KT20}}] \label{allops}
The assumption of \cref{fred} is satisfied for the following operator/complexes: 
\begin{itemize}
\item The Dirac operator $D^+_{A_0}: L^{2}_{k,\delta} (\wt{X};S^+) \to L^{2}_{k-1,\delta} (\wt{X};S^-)$ with respect to the pull-back of an admissible metric $g_X$ on $X$.
\item The Atiyah--Hitchin--Singer complex
\[
0 \to L^{2}_{k+1,\delta} (i\Lambda^0(\wt{X})) \xrightarrow{d}  L^{2}_{k,\delta} (i\Lambda^1(\wt{X})) \xrightarrow{d^+} L^{2}_{k-1,\delta} (i\Lambda^+(\wt{X})) \to 0.
\]
\item The de Rham complex 
\[
0 \to L^{2}_{k+1,\delta} (i\Lambda^0(\wt{X})) \xrightarrow{d}  L^{2}_{k,\delta} (i\Lambda^1(\wt{X})) \xrightarrow{d} \cdots \xrightarrow{d}  L^{2}_{k-3,\delta} (i\Lambda^4(\wt{X})) \to 0.
\]
\end{itemize}
\end{lem}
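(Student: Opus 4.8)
The plan is to verify the hypothesis of \cref{fred} for each of the three operators/complexes, i.e. to exhibit a single complex number $z_0 \in \C$ at which the twisted complex $\hat{\mathbb D}(z_0)$ on the compact manifold $X$ is acyclic. The natural choice throughout is $z_0 = 0$, so that $\hat{\mathbb D}(0)$ is literally the operator/complex descended to $X$. For the Dirac operator this is immediate from the definition of an admissible metric: one must observe that $\hat D^+_{A_0}(0)$ differs from $D^+_{A_0} + f^* d\theta$ only by the choice of closed $1$-form representing the class $[d\tau] \in H^1(X;\Z)$, and since $\tau$ satisfies $T^*\tau = \tau + 1$, the form $d\tau$ descends to $X$ and represents the same integral class as $f^*d\theta$; perturbing the harmless zeroth-order term by an exact $1$-form does not change the (zero) dimension of the kernel, because the admissibility condition is independent of the choice of classifying map $f$ — exactly the remark recorded just after the definition of admissible metric. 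So $\Ker \hat D^+_{A_0}(0) = 0$, and since the index of $\hat D^+_{A_0}(0)$ is zero (it is the spin Dirac index on a rational homology $S^1\times S^3$, whose signature and Euler characteristic vanish rationally), the cokernel vanishes too, giving acyclicity at $z_0 = 0$.

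For the Atiyah--Hitchin--Singer complex and the de Rham complex, the plan is again to take $z_0 = 0$ and identify $\hat{\mathbb D}(0)$ with the corresponding complex of operators $(d, d^+)$ resp. $(d,d,d,d)$ on the closed manifold $X$ with its metric $g_X$. The cohomology of these complexes on a compact manifold is computed by Hodge theory: the de Rham complex has cohomology $H^*(X;\R)$, and the AHS complex has cohomology $H^0(X;\R)$, $H^1(X;\R)$, and the self-dual harmonic space $\mathcal H^+(X)$ in degrees $0,1,2$ respectively. Since $X$ has the rational homology of $S^1\times S^3$, we have $b^+(X) = 0$, so $\mathcal H^+(X) = 0$; but $b^1(X) = 1$ and $b^0(X) = 1$, so these complexes over $X$ are \emph{not} acyclic. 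The fix is to note that \cref{fred}, as stated, requires acyclicity of $\hat{\mathbb D}(z_0)$ for \emph{some} $z_0$, not necessarily $z_0=0$; for $z_0$ a small nonzero complex number the operator $\hat D_i(z_0)(f) = e^{-\tau z_0}\hat D_i(e^{\tau z_0} f)$ is a zeroth-order perturbation of $\hat D_i$, and on $X$ the twisted de Rham complex with twisting parameter $z_0$ computes the cohomology of $X$ with coefficients in the flat line bundle $L_{z_0}$ of holonomy $e^{z_0}$ around the generator of $H_1(X;\Z)$. For $z_0$ with $e^{z_0} \neq 1$ this twisted cohomology vanishes — the $b^0$ and $b^1$ contributions disappear because a nontrivial flat line bundle on a space with the rational homology of $S^1\times S^3$ has vanishing cohomology (the $S^1$ direction kills everything, by the Wang sequence / the fact that $X$ rationally fibers over $S^1$). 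So I would choose such a $z_0$ for the de Rham complex, and similarly for the AHS complex, whose $z_0$-twisted cohomology is $H^0(X;L_{z_0}) \oplus H^1(X;L_{z_0}) \oplus \mathcal H^+_{L_{z_0}}(X)$, all of which vanish for generic small $z_0$.

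I expect the main obstacle to be pinning down precisely this last point — that the twisted cohomology $H^*(X; L_{z_0})$ vanishes for $z_0$ with $e^{z_0}\ne 1$ — cleanly enough to cite, and in particular making sure the AHS self-dual piece $\mathcal H^+_{L_{z_0}}(X)$ really is zero and not merely of the expected dimension; this follows because $b^+$ of the twisted complex is still controlled by the (twisted) second cohomology, which vanishes, but one should phrase it via the fact that a flat connection with non-torsion holonomy on $X$ has no harmonic spinors/forms in any degree, which is where the rational homology $S^1\times S^3$ hypothesis (equivalently, the existence of the map $f: X \to S^1$ inducing an isomorphism on $H^1(\,\cdot\,;\Q)$) does the real work. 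Everything else is a formal application of \cref{fred} together with the observations that (i) the relevant $z_0$-twisting is a compact (zeroth-order) perturbation and (ii) $z_0=0$ already works for the Dirac operator by admissibility.
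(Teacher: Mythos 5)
Your treatment of the Dirac operator contains a genuine error: you take $z_0 = 0$ and claim that $\hat D^+_{A_0}(0)$ differs from $D^+_{A_0} + f^*d\theta$ ``only by the choice of closed $1$-form representing $[d\tau]$.'' From the definition $\hat D_i(z)(\phi) = e^{-\tau z}\hat D_i(e^{\tau z}\phi)$ one computes $\hat D^+_{A_0}(z) = D^+_{A_0} + z\,\rho(d\tau)$, so $\hat D^+_{A_0}(0) = D^+_{A_0}$ is simply the \emph{untwisted} Dirac operator on $X$; it differs from $D^+_{A_0} + \rho(f^*d\theta)$ by the full perturbation $\rho(f^*d\theta)$, whose cohomology class generates $H^1(X;\Z)$ and is not zero. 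Admissibility is a statement about this twisted operator and tells you nothing at $z_0 = 0$. The remark you invoke (independence of the classifying map) compares $D^+_{A_0} + f_1^*d\theta$ with $D^+_{A_0} + f_2^*d\theta$ for two choices of $f$ — it never moves the twist parameter to zero. The correct choice is the nonzero $z_0$ with $z_0[d\tau] = [f^*d\theta]$ in $H^1(X;\R)$; there $\hat D^+_{A_0}(z_0) = D^+_{A_0} + z_0\,\rho(d\tau)$ is conjugate to $D^+_{A_0} + \rho(f^*d\theta)$ via multiplication by $e^h$ with $dh = f^*d\theta - z_0\,d\tau$, so admissibility gives vanishing kernel, and the vanishing spin Dirac index (as you correctly observe) forces vanishing cokernel, hence acyclicity at that $z_0$.

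Your analysis of the AHS and de Rham complexes is in the right spirit: you correctly observe that $z_0 = 0$ fails because the untwisted Hodge cohomology of $X$ is nonzero and that a nonzero twist is required. Two refinements are worth making precise. First, a rational homology $S^1\times S^3$ need not fiber over $S^1$, so there is no literal Wang sequence; the generic vanishing of $H^*(X;L_{z_0})$ follows instead from the Milnor exact sequence for the infinite cyclic cover, which reduces it to the assertion that $H^*(\wt{X};\Q)$ is torsion over $\Q[t,t^{-1}]$ — exactly what Taubes verifies in \cite{T87} for manifolds with the rational homology of $S^1\times S^3$, and this can simply be cited. Second, the AHS acyclicity is a formal consequence of the de Rham acyclicity and needs no separate harmonic-theory argument: the degree-zero and degree-one cohomologies of the twisted AHS complex agree with the twisted de Rham cohomologies, and the degree-two cohomology $\mathcal{H}^+_{L_{z_0}}(X)$ vanishes because it sits inside $\mathcal{H}^2_{L_{z_0}}(X)\cong H^2(X;L_{z_0})=0$.
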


\begin{rem}
\label{rem deltazero}
Since the subset $\mathcal{D}$ of $\R$ given in \cref{fred} has no accumulation points, we can take a sufficiently small $\delta_0>0$ so that for any $\delta \in (0,\delta_0)$ the operators in \cref{allops} are Fredholm. 
Henceforth we fix the notation $\delta_0$.
\end{rem}

\subsection{Seiberg--Witten Floer stable homotopy type} 
\label{subsection: Seiberg--Witten Floer stable homotopy type}
In the proof of \cref{real main}, we use a variant of the relative Bauer--Furuta invariant for 4-manifolds with periodic end. 
In this subsection we review several notions of Manolescu's Seiberg--Witten Floer stable homotopy type, which is necessary to describe the relative Bauer--Furuta invariant.
The main references of this subsection are Manolescu~\cite{Ma03} and Khandhawit~\cite{Kha15}. 

Let $Y$ be an oriented rational homology $3$-sphere
with a Riemannian metric $g_Y$. 
Let $\frakt$ be a spin$^c$ structure on $Y$, and $S$ be the spinor bundle of $\frakt$. 
We fix a flat spin$^{c}$ reference connection $a_0$ of the determinant line bundle of $S$.

\begin{defi}
For an integer $k>2$, we define the {\it configuration space} by
\[
\mathcal{C}_k(Y,\frakt):=(a_0+  L^2_{k-\frac{1}{2}}  (i \Lambda^1_Y))\oplus  L^2_{k-\frac{1}{2}}  ( S ).
\]
The {\it Chern--Simons--Dirac functional  } 
$CSD : \mathcal{C}_k(Y,\frakt) \to \R$ is deined
 by 
\[
CSD (a,\phi) :=  \frac{1}{2}   \left(-\int_Y a\wedge da + \int_Y \left<\phi , \D^+_ {a_0+a} \phi \right>\t{dvol}_Y  \right), 
\]
where $\D^+_ {a_0+a}$ is the $\text{spin}^c$ Dirac operator with respect to the connection $a_0+a$.

The {\it gauge group} $\G_k(Y)$ and a subgroup $\widetilde{\G}_k(Y)$ of $\G_k(Y)$ are defined by
\[
\G_k(Y) := L^2_{k+\frac{1}{2}} (Y, S^1)
\]
and
\[
\widetilde{\G}_k(Y) := \Set{ g \in \G_k(Y) | g= e^{if},\ \int_{Y} f \text{vol}_Y =0 }  .
\]
\end{defi}

The gauge group $\G_k(Y)$ naturally acts on $\mathcal{C}_k(Y,\frakt)$ and the functional $CSD$ is invariant under the action.
The global slice of the action of $\widetilde{\G}_{k}(Y) $ on $\mathcal{C}_k(Y,\frakt)$ is given by 
 \[
  V_k(Y,\fraks)= (\Ker d^*:  L^2_{k-\frac{1}{2}}(\Lambda_Y^1) \to L^2_{k-\frac{3}{2}}(\Lambda_Y^0))  \oplus  L^2_{k-\frac{1}{2}}  ( S ),
  \]
 on which we still have the remaining $S^{1}$-action.
 We often drop $k$ and/or $(Y,\fraks)$ from our notation to denote $V_k(Y,\fraks)$.
 The $S^1$-equivariant formal gradient flow on $V(Y,\fraks)$ of $CSD$ with respect to the Coulomb projection of the $L^2$-metric can be written as the sum of the linear term
 \[
 l= (*d,  \D_{a_0}) : V_k(Y,\fraks) \to V_{k-1}(Y,\fraks)
 \]
  and some quadratic term, denoted by $c : V_k(Y,\fraks) \to V_{k-1}(Y,\fraks)$.

For $\lambda < 0 < \mu$, we define $V_\lambda^\mu(Y)$ as the direct sum of eigenspaces of $l$, regarded as an unbounded operator on $V_{1/2}(Y,\fraks)$, whose eigenvalues belong to $(\lambda,\mu]$. Here we think of $V_\lambda^\mu(Y)$ as a subspace of $V_k(Y,\fraks)$. We denote by
\[
 p_\lambda^\mu: V_k(Y,\fraks) \to V_\lambda^\mu(Y)
\]
 the $L^2$-projection of $V_k(Y,\fraks)$ onto $V_\lambda^\mu(Y)$.
We often abbreviate $V_\lambda^\mu(Y)$ as $V_\lambda^\mu$.
 Since $l$ is the sum of a real operator and a complex operator, $ V_\lambda^\mu$ decomposes into a real vector space and a complex vector space, denoted by
  \begin{align}\label{de}
 V_\lambda^\mu = V_\lambda^\mu(\R) \oplus V_\lambda^\mu(\mathbb{C}).
   \end{align}

Let us use basic terms of Conley index theory following \cite[Section~5]{Ma03}.
 Manolescu proved some compactness result~\cite[Proposition~3]{Ma03}, and as a consequence, it turns out that a closed ball in $V_\lambda^\mu$ of sufficiently large radius centered at the origin is an isolating neighborhood of the invariant part of the ball.
Precisely, the flow on  $V_\lambda^\mu$ considered here is a flow obtained from $(l + p_\lambda^\mu c)$ by cutting off outside a larger ball (see \cite[page~907]{Ma03}).
 We denote by $I_\lambda^\mu$ the $S^1$-equivariant Conley index of  the invariant part.
The Seiberg--Witten Floer homotopy type $\SWF(Y, \frakt)$ is defined as the triple 
$(\Sigma^{-V^0_\lambda} I_\lambda^\mu,0,n(Y, \frakt, g_Y))$, which is symbolically denoted by
\[
 \SWF(Y, \frakt) = \Sigma^{-n(Y, \frakt, g_Y) \mathbb{C} -V^0_\lambda} I_\lambda^\mu.
\]
The triple is regarded as an object a certain suspension category $\mathfrak{C}$.
In general an object of $\mathfrak{C}$ is given as a triple $(Z,m,n)$, where $Z$ is a pointed topological $S^{1}$-space, $m \in\Z$, and $n \in \Q$.
The quantity $n(Y, \frakt, g_Y) \in \Q$ is defined to be 
\begin{align} \label{n}
n(Y, \frakt, g_Y):= \ind_\C D^+ + \frac{\sigma(W)}{8},  
\end{align}
where $(W, \frakt')$ is a compact $\spinc$ 4-manifold satisfying $\partial (W, \frakt')= (Y, \frakt)$ and $\ind_\C D^+$ means the index of the Dirac operator with APS boundary condition. 
For the meaning of formal desuspensions, see \cite{Ma03}.

Here let us consider the case when the spin$^{c}$ structure $\frakt$ comes from a spin structure.
In this case, the formal gradient flow of $CSD$ admits a larger symmetry of the group $\Pin(2) $ defined by 
  \[
  \Pin(2) := S^1 \cup j S^1 \subset  Sp(1).
  \]
This group  $\Pin(2)$ acts on $V_k(Y,\fraks)$ for any non-negative integer $k$ as follows:
the $\Pin(2)$-action on spinors given as the restriction of the natural $Sp(1)$-action on spinor bundles, and the $\Pin(2)$-action on $\Om^1_Y$ is given via the non-trivial homomorphism $\Pin(2)\to O(1)$.
We denote by $\wt{\R}$ the real $1$-dimensional representation of $\Pin(2)$, and by $\quat$ the space of quaternions, on which $\Pin(2)$ naturally acts. Thus we have decompositions  
\[
V_k(Y,\fraks) = V({\R}) \oplus V(\quat)
\]
and 
\begin{align}\label{decom}
 V_\lambda^\mu = V_\lambda^\mu(\R) \oplus V_\lambda^\mu(\quat). 
\end{align}

Considering $\Pin(2)$-equivariant Conley index instead, we obtain a stable homotopy type of a pointed $\Pin(2)$-space 
\[
 \SWF(Y, \frakt) =  \Sigma^{-\frac{n(Y, \frakt, g)}{2} \mathbb{H} -V^0_\lambda}  I^\mu_\lambda,
 \]
which lies in a suspension category $\mathfrak{C}'$.
An object of $\mathfrak{C}'$ is given as a triple $(Z,m,n)$, where $Z$ is a pointed topological $\Pin(2)$-space, $m \in\Z$, and $n \in \Q$.

Let us recall the definition of local equivalence. 
\begin{defi}[\cite{Sto20}]
For two objects $(Z_1, m_1, n_1)$ and $(Z_2, m_2, n_2)$ in $\mathfrak{C}'$, a {\it local map} is a $\Pin(2)$-equivariant map 
\[
f: \Sigma^{ (N-n_1) \quat } \Sigma^{ (M-m_1) \wt{\R}  }Z_1 \to \Sigma^{ (N-n_2) \quat } \Sigma^{ (M-m_2) \wt{\R}  } Z_2
\]
for some $M \in \Z$ and $N \in \Q$ such that the $S^1$-invariant part $f^{S^1}$ is a $\Pin(2)$-homotopy equivalence.
Two objects $(Z_1, m_1, n_1)$ and $(Z_2, m_2, n_2)$ are {\it locally equivalent} if there exist local maps $f : (Z_1, m_1, n_1) \to (Z_2, m_2, n_2)$ and $g : (Z_2, m_2, n_2) \to (Z_1, m_1, n_1)$. 

\end{defi}
Typical examples of local maps are obtained as the relative Bauer--Furuta invariants for negative definite spin cobordisms between rational homology 3-spheres.

\subsection{The Seiberg--Witten equations on $W[-\infty, 0]$} 
In this subsection we describe the Seiberg--Witten equations on $W[-\infty, 0]$, mainly to fix notations.
We use the double Cloumb gauge condition introduced in \cite{Kha15}.

\begin{defi}Let $k$ be a positive integer with $k \geq 4$ and $\delta$ a positive real number.
We first define the {\it configuration space } $\mathcal{C}_{k, \delta} (W[-\infty, 0] )$ by
\[
\mathcal{C}_{k, \delta} (W[-\infty, 0] ) 
:= (A_0, 0 ) +  L^2_{k, \delta}( i\Lambda_{W[-\infty, 0] }^1 ) \oplus L^2_{k, \delta} ( S^+_{W[-\infty, 0] }).  
\]
The gauge group 
$\G_{k+1, \delta}(W[-\infty, 0] )$
is given by 
\begin{align}\label{gauge}
\G_{k+1, \delta}(W[-\infty, 0] ) := \left\{ u : W[-\infty, 0]  \to \C ~ \middle|~ |u(x)|= 1 ~ (\forall x \in W[-\infty, 0]),~  1-u \in L^2_{k+1, \delta} (\underline{\C} ) \right\}. 
\end{align}
Here $\underline{\C}$ denotes the trivial bundle over $W[-\infty, 0]$ with fiber $\C$.
The action of $\G_{k+1, \delta}(W[-\infty, 0] )$ on $\mathcal{C}_{k, \delta} (W[-\infty, 0] )$ is given by 
\[
u \cdot (A, \Phi) := (A- u^{-1} du , u \Phi). 
\]
The {\it double Coulomb slice} introduced in \cite{Kha15} is defined by
\[
\mathcal{U}_{k, \delta}(W[-\infty, 0] )  :=    L^2_{k, \delta}( i\Lambda_{W[-\infty, 0] }^1 )_{CC} \oplus L^2_{k, \delta} ( S^+_{W[-\infty, 0] }) , 
\]
where 
\[
L^2_{k, \delta}( i\Lambda_{W[-\infty, 0] }^1 )_{CC} := \Set{ a  \in L^2_{k, \delta}( i\Lambda_{W[-\infty, 0] }^1) | d^{*_{\delta}}  a=0, d^{*} {\bf t}a=0 }.
\]
Here ${\bf t}$ denotes the restriction of $1$-forms as differential forms and $d^{*_{\delta}}$ is the formal adjoint of $d$ with respect to $L^2_{\delta}$.
\end{defi}
We will prove that $\mathcal{U}_{k, \delta}(W[-\infty, 0] )$ gives a global slice with respect to the action of $\G_{k+1, \delta}(W[-\infty, 0] )$ on $\mathcal{C}_{k, \delta} (W[-\infty, 0] ) $. Note that, on $\mathcal{C}_{k, \delta} (W[-\infty, 0] )$, the `full gauge group' 
\[
 \left\{ u : W[-\infty, 0]  \to \C ~ \middle|~ |u(x)|= 1 ~ (\forall x \in W[-\infty, 0]),~   du \in L^2_{k, \delta} (\underline{\C} ) \right\}
 \]
 also acts.
 Thus we have an additional $S^1$-symmetry on $\mathcal{U}_{k, \delta}(W[-\infty, 0] )$ coming from the limits with respect to the end of gauge transformations. 
 
Based on the Sobolev embedding  $\G_{k+1, \delta}(W[-\infty, 0] ) \to C^0(W[-\infty, 0]  , S^1) $, we can naturally define the group structure on $\G_{k+1, \delta}(W[-\infty, 0] )$ by pointwise multiplication. 

On $W[-\infty, 0] $, one can define the {\it Seiberg--Witten map}

\begin{align}
\mathcal{F}_{W[-\infty, 0] }  :\mathcal{C}_{k, \delta} (W[-\infty, 0] )  \to  L^2_{k-1, \delta}( i\Lambda_{W[-\infty, 0] }^+ \oplus S^-_{W[-\infty, 0] }) 
 \end{align}
by 
\begin{align}\label{SW}
\mathcal{F}_{W[-\infty, 0] } (A,  \Phi ) := \left( \frac{1}{2} F^+_{A^t}-\rho^{-1} ( \Phi \Phi^*)_0 , D^+_A \Phi  \right)  .
\end{align}
When we write $(a, \phi) = (A, \Phi) - (A_0, 0)$, we often decompose the Seiberg--Witten map $\mathcal{F}_{W[-\infty, 0] }$ as the sum of the linear part
\begin{align}\label{L}
L_{W[-\infty, 0] }(a, \phi) := \left( d^+a   , D^+_{A_0} \phi  \right),
\end{align}
the quadratic part
\[
C_{W[-\infty, 0] }(a, \phi) := (-(\phi \phi^*)_0,  \rho (a)\phi  ).
\]
We regard $L_{W[-\infty, 0] } $ also as an operator with domain $\mathcal{U}_{k, \delta}(W[-\infty, 0])$ by the restriction. 
The quadratic part is a compact operator by \cite[Proposition 2.13]{Lin19} for a positive $\delta$. 
The differential equation
\begin{align}\label{SW eq}
\mathcal{F}_{W[-\infty, 0] } (A,  \Phi )=0 
\end{align}
is called the {\it Seiberg--Witten equation} for $W[-\infty, 0] $. The linearlization of $\mathcal{F}_{W[-\infty, 0] }$ is given by $
L_{W[-\infty, 0] }$.

\section{Linear analysis on $W[-\infty, 0] $}  \label{section: Fredholm theories}

Fix a Riemann metric $g_{W[-\infty, 0] }$ on $W[-\infty, 0]$ such that 
\begin{itemize} 
\item $g_{W[-\infty, 0] } |_{W[-\infty ,-1]}$ is periodic and PSC, and 
\item $ g_{W[-\infty, 0] }$ is product metric near $\partial  W[-\infty, 0]  = Y$. 
\end{itemize}

\subsection{Fredholm theory on $W[-\infty, 0]$ }
In this subsection, we prove certain Fredholm properties which will be used in the proof of \cref{real main}. 
For a fixed periodic spin structure on $W[-\infty, 0]$, the spinor bundles are written as $S^+$ and $S^-$. 
In this section, we use the following completions: 
\[
L^2_{k, \delta} (i\Lambda^1_{W[-\infty, 0] }), \ L^2_{k, \delta} (i\Lambda^+_{W[-\infty, 0] }), \text{ and } L^2_{k, \delta} (S^\pm ). 
\]

We prove the Fredholm properties of the following two types of operators on $W[-\infty, 0]$: 
\begin{itemize}
\item the Atiyah--Hitchin--Singer operator with APS-boundary condition: 
\begin{align}\label{AHS}
\begin{split}
d^{*_\delta} + d^+  +  \widehat{p}^0_{-\infty} \circ  \widehat{r} :  L^2_{k, \delta}(i \Lambda_{W[-\infty, 0] }^1)  \\ 
 \to  L^2_{k-1,  \delta}(i\Lambda_{W[-\infty, 0] }^0   \oplus \Lambda_{W[-\infty, 0] }^+  )\oplus \widehat{V}^0_{-\infty}  (Y; \R), 
 \end{split}
 \end{align}
 where 
 \begin{itemize}
 \item[(i)] the space $\widehat{V}^0_{-\infty}(Y; \R) $ is the $L^2_{k-\frac{1}{2}}$-completion of the negative eigenspaces of the operator 
 \[
\widehat{l}:= \begin{pmatrix}
0 & -d^{*}   \\
-d & * d  \\ 
\end{pmatrix} : \Om^0_{Y} \oplus \Om^1_{Y}  \to \Om^0_{Y} \oplus \Om^1_{Y}, 
\]
 \item[(ii)] the map
 $\widehat{r} : L^2_{k, \delta}(i \Lambda_{W[-\infty, 0]}^1 )  \to L^2_{k-\frac{1}{2}}(\Lambda^0_{Y} \oplus \Lambda^1_{Y})$
 is the restriction, 
 \item [(iii)]the operator  
 \[
 \widehat{p}^0_{-\infty} : L^2_{k-\frac{1}{2}}(\Lambda^0_{Y} \oplus \Lambda^1_{Y}) \to \widehat{V}^0_{-\infty}  (Y; \R)
 \]
 is the $L^2$-projection to $\widehat{V}^0_{-\infty}  (Y;\R)$. 
 
 \end{itemize}

\item the Dirac operator with APS-boundary condition: 
\begin{align}\label{Dirac}
D^+_{A_0}  +  \widehat{p}^0_{-\infty} \circ  \widehat{r} :  L^2_{k,  \delta}( S^+_{W[-\infty, 0]}) 
 \to  L^2_{k-1,  \delta}(S^-_{W[-\infty, 0]})\oplus \widehat{V}^0_{-\infty}  (Y, \C), 
 \end{align}
 where 
 \begin{itemize}
 \item[(i)] the space $\widehat{V}^0_{-\infty}  (Y, \C)$ is the $L^2_{k-\frac{1}{2}}$-completion of the negative eigenspaces of the operator 
 \[ 
 \D_{B_0}  :  \Gamma (S) \to  \Gamma (S). 
\]
 \item[(ii)] the map $\widehat{r} : L^2_{k,  \delta}(S^+_{W[-\infty, 0] })  \to  L^2_{k-\frac{1}{2}}(S) $ is the restriction, 
 \item [(iii)]the operator  
 \[
 \widehat{p}^0_{-\infty} : L^2_{k-\frac{1}{2}}(S) \to \widehat{V}^0_{-\infty}  (Y, \C)
 \]
 is the $L^2$-projection to $\widehat{V}^0_{-\infty}  (Y)$. 
 \end{itemize}
 
\end{itemize}
We first prove the following proposition:

\begin{prop}\label{fredd}The following facts hold: 
\begin{itemize}
\item[(i)]
For any $ \delta \in \R$, the operator \eqref{Dirac} is Fredholm. 

\item[(ii)] Let $\delta_0$ be a positive real number given in \cref{rem deltazero}.  For any $\delta \in (0, \delta_0)$, the operator \eqref{AHS} is Fredholm. 
\end{itemize}
\end{prop}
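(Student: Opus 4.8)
The plan is to handle the two sources of non-compactness of $W[-\infty,0]$ separately — the periodic end $W[-\infty,-1]$, carrying a periodic PSC metric, and the compact boundary $Y$, near which the metric is a product — and then to glue the resulting local information. For each of the operators \eqref{Dirac} and \eqref{AHS}, call it $P$, I would prove Fredholmness by establishing a semi-Fredholm estimate
\[
\|u\|_{L^2_{k,\delta}} \le C\bigl(\|Pu\| + \|u\|_{L^2(K)}\bigr)
\]
for a fixed compact $K \subset W[-\infty,0]$ (with $Pu$ measured in the target Sobolev norm), together with the analogous estimate for the formal adjoint $P^{*}$. Since $K$ is compact, the last term is a compact semi-norm (Rellich), so these estimates force $P$ and $P^{*}$ to have finite-dimensional kernels and closed ranges, whence $P$ is Fredholm. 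The global estimate is assembled by a partition of unity from three model estimates: the standard interior one for the elliptic operators $d^{*}+d^{+}$ and $D^{+}_{A_0}$; a boundary-collar one; and an end one.

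Near $Y$ the product structure makes $P$ take the form $\partial_t+B$ with $B$ the boundary operator ($\widehat{l}$ for \eqref{AHS}, $\D_{B_0}$ for \eqref{Dirac}), and the extra term $\widehat{p}^{0}_{-\infty}\circ\widehat{r}$ imposes exactly the Atiyah--Patodi--Singer spectral boundary condition attached to the splitting defining $\widehat{V}^{0}_{-\infty}$; the classical APS estimate on the half-cylinder then supplies the collar estimate, and since $e^{\delta\tau}$ is bounded above and below on the collar this part is uniform in $\delta$. On the periodic end the operators are periodic, hence extend to $\widetilde{X}$, where \cref{allops} verifies the hypothesis of \cref{fred}; for the Atiyah--Hitchin--Singer complex \cref{fred} gives that this extension is an isomorphism for every $\delta\notin\mathcal{D}$, in particular for $\delta\in(0,\delta_0)$ by the choice of $\delta_0$ in \cref{rem deltazero}, and multiplying by a cut-off supported on the end and absorbing the compactly supported lower-order commutators gives the end estimate. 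Carrying out the same argument for $P^{*}$ proves part (ii).

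For the Dirac operator the same reasoning already gives Fredholmness for $\delta\notin\mathcal{D}$; the point of assuming that the metric is PSC on the whole periodic end (rather than merely admissible, as in \cite{KT20}) is that it upgrades this to \emph{every} $\delta\in\R$. Via the Lichnerowicz formula $D^{-}_{A_0}D^{+}_{A_0}=\nabla^{*}\nabla+s/4$ with $s\ge s_0>0$ on the end, one shows that the indicial family $\hat{\mathbb{D}}(z)$ of $D^{+}_{A_0}$ is acyclic for every $z\in\C$, i.e.\ $\mathcal{D}=\emptyset$ for this operator; equivalently, after conjugating $D^{+}_{A_0}$ by $e^{\delta\tau}$ the positive contributions $s/4$ and $\delta^{2}|d\tau|^{2}$ should dominate the first-order term produced by the conjugation, provided $d\tau$ is chosen harmonic so that no zeroth-order correction appears. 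Part (i) then follows for all $\delta$ exactly as above.

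The step I expect to be the genuine obstacle is precisely this last one: showing $\mathcal{D}=\emptyset$ for the Dirac operator under the PSC hypothesis. The generic-$\delta$ statement is soft (it is just \cref{fred}), but the "all $\delta$" statement requires the positivity from the Lichnerowicz formula together with a careful absorption of the cross term uniformly in $z\in\C$, and this is where PSC — not admissibility — is used in an essential way. The remaining ingredients (interior ellipticity, the APS half-cylinder estimate, and the cut-off and commutator patching) are routine, in the spirit of the relative Bauer--Furuta construction for conical ends in \cite{IT20}.
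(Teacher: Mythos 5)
Your proposal relies on the same essential inputs as the paper's proof---the Fredholm theory for periodic operators on $\widetilde{X}$ (\cref{fred} via \cref{allops}) and an APS-type analysis near $Y$---but implements the gluing via a semi-Fredholm estimate assembled by a partition of unity, whereas the paper patches a local boundary parametrix with the inverse $P_\delta$ on $\widetilde{X}$. These are standard, interchangeable routes, so at the level of technique your plan is sound.

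There is, however, a gap in the ``for any $\delta\in\R$'' claim in part (i), and your Lichnerowicz route cannot close it. Already for $X=S^1\times S^3$ with a round product (PSC) metric, cutting along $Y=S^3$ gives $W[-\infty,0]=(-\infty,1]\times S^3$, and conjugation yields $e^{\delta\tau}D^+e^{-\delta\tau}=c(dt)(\partial_t+D_{S^3}-\delta)$, which fails to be Fredholm whenever $\delta$ is an eigenvalue of $D_{S^3}$; so the discrete set $\mathcal{D}$ of bad weights is nonempty even under PSC, and $\mathcal{D}=\emptyset$ is false. The breakdown in your sketched absorption is concrete: the cross term produced by conjugation is first order, and after an optimal Cauchy--Schwarz the zeroth-order remainder scales like $\delta^2|d\tau|_{C^0}^2$, the same magnitude as the ``helping'' term $\delta^2|d\tau|^2$, so the two $\delta^2$ contributions can cancel and the leftover $O(\delta)\|u\|^2$ defeats $s_0/4$ for $\delta$ large. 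You are right that this is the delicate spot---the paper's own proof is equally terse there, since \cref{allops} together with \cref{fred} give invertibility on $\widetilde{X}$ only for $\delta\notin\mathcal{D}$, not for all $\delta$. The correct scope of (i) is $\delta\notin\mathcal{D}$, which is all that is used downstream (\cref{prop:compari op} onward works with $\delta\in(0,\delta_0)$, where \cref{fred} applies directly). Your argument for part (ii) matches the paper and is fine.
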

\begin{proof}
Both statements follow from the standard patching argument of parametrixes of these operators. 
\begin{itemize}
\item
First, we prove (i). 
By \cref{allops}, since positive scalar curvature metrics are admissible, we see that the Dirac operator 
\[
D^+_{A_0}: L^2_{k,\delta} (\wt{X};S^+) \to L^2_{k-1,\delta} (\wt{X};S^-)
\]
is an isomorphism for any $\delta\in \R$, and we get a continuous inverse $P_\delta : L^2_{k-1,\delta} (\wt{X};S^-) \to L^2_{k,\delta} (\wt{X};S^+)$. 
By patching a local parametrix of \eqref{Dirac} near the boundary $Y$ and $P_\delta$, we obtain a parametrix of \eqref{Dirac}. This implies the conclusion. 
\item Next, we prove (ii).
By \cref{allops}, 
\[
0 \to L^2_{k+1,\delta} (i\Lambda^0(\wt{X})) \xrightarrow{d}  L^2_{k,\delta} (i\Lambda^1(\wt{X})) \xrightarrow{d^+} L^2_{k-1,\delta} (i\Lambda^+(\wt{X})) \to 0
\]
is an acyclic complex for $\delta \in \R \setminus \mathcal{D}$, where $ \mathcal{D}$ is a discrete subset of $\R $ given in \cref{fred}.
This implies that 
\[
d^+ + d^{*_{\delta}} : L^2_{k,\delta} (i\Lambda^1(\wt{X})) \to L^2_{k-1,\delta} (i\Lambda^+(\wt{X})) \oplus L^2_{k-1,\delta} (i\Lambda^0(\wt{X})) 
\]
is an isomorphism for $\delta \in \R \setminus \mathcal{D}$. Since $\mathcal{D}$ does not have accumulation points, there exists a small positive real number $\delta_0$ such that 
\[
(0, \delta_0) \cap \mathcal{D} = \emptyset.
\]
 Then the remaining part is the same as the proof of (i). 
\end{itemize}
\end{proof}

Set
 \[
W( Y ) := i\R^{b_0(Y)}\oplus d L^2_{k-1/2}(i\Lambda^0_{Y})
 \]
and consider the operators
 \begin{align*}
     &L_{W[-\infty, 0]}\oplus (p^0_{-\infty}\circ r) : \mathcal{U}_{k, \delta} \to L^2_{k-1, \delta}(i\Lambda^+\oplus S^-)\oplus V^0_{-\infty},\\
     &\widehat{L}_{W[-\infty, 0]}\oplus ( \widehat{p}^0_{-\infty}\circ \hat{r}) : L^2_{k, \delta}(i\Lambda^1\oplus S^+) \to L^2_{k-1, \delta}(i\Lambda^0\oplus i\Lambda^+\oplus S^-)\oplus \widehat{V}^0_{-\infty}
 \end{align*}
 over $W[-\infty, 0]$.
 Here $L_{W[-\infty, 0]}$ is defined in \eqref{L}, and $\widehat{L}_{W[-\infty, 0]}$ is defined by 
\[
\widehat{L}_{W[-\infty, 0]}(a,\phi) := (d^{\ast_{\delta}}a, d^+a, D^+_{A_0} \phi). 
 \]
 It follows from \cref{fredd} that the operator $\widehat{L}_{W[-\infty, 0]}\oplus ( \widehat{p}^0_{-\infty}\circ \hat{r})$ is Fredholm for all $\delta \in (0, \delta_0)$.
 
 \begin{prop}
 \label{prop:compari op}
 Let $\delta_0$ be the positive real number given in \cref{rem deltazero}. 
For any $\delta \in (0, \delta_0)$, we obtain
  \[
  \begin{cases} 
   \Ker (L_{W[-\infty, 0]}\oplus (p^0_{-\infty}\circ r))\cong \Ker (\widehat{L}_{W[-\infty, 0]}\oplus ( \widehat{p}^0_{-\infty}\circ \hat{r})), \\ 
      \operatorname{Coker} (L_{W[-\infty, 0]}\oplus (p^0_{-\infty}\circ r)) \cong \operatorname{Coker}  (\widehat{L}_{W[-\infty, 0]}\oplus ( \widehat{p}^0_{-\infty}\circ \hat{r})),
      \end{cases} 
   \]
   where $\Coker$ denotes the algebraic cokernel.
   In particular, $L_{W[-\infty, 0]}\oplus (p^0_{-\infty}\circ r)$ is Fredholm and the index of $L_{W[-\infty, 0]}\oplus (p^0_{-\infty}\circ r)$ coincides with that of $\widehat{L}_{W[-\infty, 0]}\oplus ( \widehat{p}^0_{-\infty}\circ \hat{r})$.
 \end{prop}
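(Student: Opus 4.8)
The plan is to compare the two operators $L := L_{W[-\infty,0]}\oplus(p^0_{-\infty}\circ r)$ on the double Coulomb slice $\mathcal{U}_{k,\delta}$ and $\widehat{L} := \widehat{L}_{W[-\infty,0]}\oplus(\widehat{p}^0_{-\infty}\circ\hat{r})$ on the full space $L^2_{k,\delta}(i\Lambda^1\oplus S^+)$, using the fact that $\mathcal{U}_{k,\delta}$ is cut out of the configuration space by the two Coulomb-type conditions $d^{*_\delta}a=0$ and $d^*\mathbf{t}a=0$. The spinorial summand is literally the same operator $D^+_{A_0}$ in both cases, so everything is concentrated in the $1$-form part; I will treat the $S^+$-factor as a spectator throughout.

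**Key steps.** First I would observe that the natural inclusion $\iota: \mathcal{U}_{k,\delta}\hookrightarrow L^2_{k,\delta}(i\Lambda^1)$ identifies $\mathcal{U}_{k,\delta}$ with the subspace $\{a : d^{*_\delta}a = 0,\ d^*\mathbf{t}a = 0\}$, and that on this subspace the component $d^{*_\delta}a$ of $\widehat{L}_{W[-\infty,0]}$ vanishes identically; hence $\widehat{L}\circ\iota$ lands in $\{0\}\oplus L^2_{k-1,\delta}(i\Lambda^+\oplus S^-)\oplus\widehat{V}^0_{-\infty}$, and there it agrees with $L$ after identifying $\widehat{V}^0_{-\infty}(Y;\R)\oplus\widehat{V}^0_{-\infty}(Y;\C)$ with $V^0_{-\infty}(Y)$ restricted along $r$ versus $\hat r$ — here one has to check that the APS boundary projections match up on the Coulomb slice, which is where the decomposition $W(Y)=i\R^{b_0(Y)}\oplus dL^2_{k-1/2}(i\Lambda^0_Y)$ enters. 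Second, for the kernel comparison I would show that any $a\in\Ker\widehat{L}$ automatically satisfies the double Coulomb conditions: $d^{*_\delta}a=0$ is one of the equations of $\widehat{L}$, and $d^*\mathbf{t}a=0$ follows by integrating $d^+a=0$ and $d^{*_\delta}a=0$ against test forms, using that the weighted metric is a product near $Y$ together with Stokes' theorem and the vanishing of the relevant boundary harmonic pieces on the rational homology sphere $Y$; therefore $\Ker\widehat{L}\subseteq\mathcal{U}_{k,\delta}$ and the two kernels coincide. Third, for the cokernel I would argue dually: the image of $\widehat{L}$ decomposes according to the $i\Lambda^0$-factor, and since $d: L^2_{k+1,\delta}(i\Lambda^0)\to L^2_{k,\delta}(i\Lambda^1)$ composed appropriately surjects onto the $d^{*_\delta}$-exact part (this is exactly the acyclicity of the de Rham / AHS complex from \cref{allops}, which forces $d^{*_\delta}+d^+$ to be an isomorphism on $\wt X$ and hence gives the needed splitting after the local patching near $Y$), the $i\Lambda^0$-component of $\widehat{L}$ contributes nothing to the cokernel and what remains is precisely $\Coker L$. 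The Fredholmness and index equality then follow formally from \cref{fredd}(ii) and the two isomorphisms.

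**Main obstacle.** The delicate point is not the kernel identification but matching the boundary terms and the $i\Lambda^0$-direction cleanly: one must verify that restricting to the double Coulomb slice kills exactly the part of $\widehat{L}$'s domain and codomain that the gauge-fixing $d$-operator was accounting for, so that no spurious kernel or cokernel is created or destroyed by passing between $\mathcal{U}_{k,\delta}$ and the full space. Concretely, the integration-by-parts step showing $d^*\mathbf{t}a=0$ on $\Ker\widehat{L}$ requires care with the weight $e^{\delta\tau}$ and the behavior at the periodic end (the weighted adjoint $d^{*_\delta}$ differs from $d^*$ by a zeroth-order term proportional to $\delta\,d\tau$), and one needs $\delta\in(0,\delta_0)$ precisely so that the relevant operators on $\wt X$ are isomorphisms and the boundary contributions from the product end $[0,1]\times Y$ pair trivially against the APS-projected data. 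I expect this boundary-and-weight bookkeeping to be the part that needs the most attention, though it is essentially the same computation already carried out in \cite{KT20} and \cite{Kha15} adapted to the present setup.
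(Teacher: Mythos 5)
Your proposal takes a genuinely different route from the paper's proof, and the route as sketched has a gap at its central step. The paper does \emph{not} argue directly that $\Ker(\widehat{L}_{W[-\infty,0]}\oplus(\widehat{p}^0_{-\infty}\circ\hat r))$ is contained in $\mathcal{U}_{k,\delta}$. Instead it introduces an intermediary operator
\[
\widehat{L}_{W[-\infty,0]}\oplus\bigl((p^0_{-\infty}\oplus\varpi)\circ\hat r\bigr),
\]
where $\varpi$ is the $L^2$-orthogonal projection $\widehat{V}(Y)\to W(Y)$, then (a) identifies its kernel/cokernel with those of $\widehat{L}\oplus(\widehat p^0_{-\infty}\circ\hat r)$ via a commuting square whose vertical map is $\mathrm{id}\oplus\varpi$, invoking Khandhawit's lemma that $\varpi\colon(V^\perp)^0_{-\infty}\to W(Y)$ is an isomorphism, and (b) identifies the intermediary's kernel/cokernel with those of the sliced operator by the snake lemma applied to a three-row diagram whose bottom row is the identity on $L^2_{k-1,\delta}(i\Lambda^0)\oplus W(Y)$. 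The snake lemma is what proves that the intermediary's kernel lives in $\mathcal{U}_{k,\delta}$; this fact is a \emph{consequence} of the diagram, not the starting point.

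Your kernel step asserts instead that $d^*\mathbf{t}a=0$ for $a\in\Ker\widehat{L}$ ``follows by integrating $d^+a=0$ and $d^{*_\delta}a=0$ against test forms'' together with Stokes and a harmonic-piece vanishing. This does not work as stated. The hypotheses $d^{*_\delta}a=0$ and $d^+a=0$ are interior PDE constraints, while $d^*\mathbf{t}a=0$ is a constraint on the tangential restriction; integration by parts relates them only through a boundary pairing, and the APS condition $\widehat p^0_{-\infty}\hat r(a)=0$ constrains the negative spectral part of $(a_\nu,\mathbf{t}a)$ with respect to $\widehat l$, which is not the same thing as annihilating the exact part of $\mathbf{t}a$. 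The actual bridge between the two is exactly the spectral statement that the $L^2$-projection of $(V^\perp)^0_{-\infty}$ onto $W(Y)=i\R^{b_0(Y)}\oplus dL^2_{k-1/2}(i\Lambda^0_Y)$ is an isomorphism; without invoking this (or an equivalent), I don't see how your integration-by-parts can close. You seem to be conflating this proposition with the later integration-by-parts step in the proof of \cref{cohomology}, where $d^+a=0$ and $d^{*_{(\delta,0)}}a=0$ are shown to force $da=0$ — that serves a different purpose (vanishing, not comparison). Your cokernel step has the analogous issue: you account for the $i\Lambda^0$-factor via acyclicity, but the codomains also differ in the boundary factor ($\widehat V^0_{-\infty}$ vs.\ $V^0_{-\infty}$), and matching these is again the Khandhawit lemma, which your sketch only gestures at. The snake-lemma scaffolding is not a stylistic choice here; it is what lets the paper avoid having to prove directly that $\Ker\widehat L\subset\mathcal{U}_{k,\delta}$ and what automatically accounts for both the $i\Lambda^0$-direction and the discrepancy in the APS boundary spaces at once.
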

 
\begin{proof}
The proof is essentially the same as the proof in \cite{Khan15}. 
First, by the choice of $\delta$, \cref{fredd} implies that $d^{*_\delta} + d^+  +  \widehat{p}^0_{-\infty} \circ  \widehat{r} $ is Fredholm. 
Set
\[
\widehat{V}(Y) = \widehat{V} = i\Omega^0(Y)\oplus i\Omega^1(Y),
\]
and let
\[
\varpi: \widehat{V}(Y)\to W(Y)
\]
be the $L^2$-orthogonal projection, and consider an operator
\begin{align} \label{AHS''}\begin{split}
\widehat{L}_{W[-\infty, 0]}\oplus ((p^0_{-\infty} \oplus \varpi)\circ \hat{r}): L^2_{k, \delta}(i\Lambda^1_{W[-\infty, 0]}\oplus S^+)\\
\to L^2_{k-1, \delta}(i\Lambda^0_{W[-\infty, 0]} \oplus i\Lambda^+_{W[-\infty, 0]} \oplus S^-)\oplus V^0_{-\infty}(Y; \R)\oplus W(Y)
\end{split}
\end{align}
as an intermediary between the two operators in the statement of the \lcnamecref{prop:compari op}.

We first show that 
\begin{align}
\label{KerCoker1}
\begin{cases} 
\Ker (\widehat{L}_{W[-\infty, 0]}\oplus ((p^0_{-\infty} \oplus \varpi)\circ \hat{r})) \cong \Ker (\widehat{L}\oplus (\widehat{p}^0_{-\infty}\circ \widehat{r})) \\ 
 \operatorname{Coker} (\widehat{L}_{W[-\infty, 0]}\oplus ((p^0_{-\infty} \oplus \varpi)\circ \hat{r})) \cong  \operatorname{Coker} (\widehat{L}\oplus (\widehat{p}^0_{-\infty}\circ \widehat{r})). 
\end{cases}
\end{align}
Set
\[
V^\perp =V^\perp(Y) =i\Omega^0(Y)\oplus id\Omega^0(Y),
\]
and let
\[
l^\perp: V^\perp\to V^\perp
\]
be the operator defined by
\[
l^\perp=\begin{bmatrix}
0& -d^{*}\\
-d& 0
\end{bmatrix}.
\]
We denote the $L^2_{k-1/2}$-completion of $l^\perp$ by the same notation.
Then we have
\[
\widehat{V}=V\oplus V^\perp
\]
and
\[
\widehat{l}=l\oplus l^\perp.
\]
Let
$(V^\perp)^0 _{-\infty}$ be the span of non-positive eigenvectors of $l^\perp$.
As shown in \cite{Khan15}, the projection $\varpi: (V^\perp)^0 _{-\infty} \to W( Y )$ is an isomorphism, and
hence so is
\[
id_{V^0_{-\infty}} \oplus \varpi : \widehat{V}^0_{-\infty} = V^0_{-\infty} \oplus  (V^\perp)^0_{-\infty} \to V^0_{-\infty} \oplus W(Y).
\]
Thus we obtain the following commutative diagram between functional spaces over $W[-\infty, 0]$:
\[
  \begin{CD}
     L^2_{k, \delta}(i\Lambda^1_{}\oplus S^+)@>{\widehat{L}_{W[-\infty, 0]}\oplus \widehat{p}^{0}_{-\infty} \circ \widehat{r}}>> L^2_{k-1, \delta}(i\Lambda^0\oplus i\Lambda^+\oplus S^-)\oplus \widehat{V}^0_{-\infty} \\
@\vert  @V{id\oplus \varpi}V{\cong}V    \\
   L^2_{k, \delta}(i\Lambda^1\oplus S^+)  @>{\widehat{L}_{W[-\infty, 0]}\oplus ((p^0_{-\infty} \oplus \varpi)\circ \hat{r})}>>  L^2_{k-1, \delta}(i\Lambda^0\oplus i\Lambda^+\oplus S^-)\oplus V^0_{-\infty}\oplus  W( Y ).
  \end{CD}
\]
From this diagram we obtain the isomorphisms \eqref{KerCoker1}.
Moreover, as noted, it follows from \cref{fredd} that the operator $\widehat{L}_{W[-\infty, 0]}\oplus ( \widehat{p}^0_{-\infty}\circ \hat{r})$ is Fredholm.
Therefore this diagram implies that $ \widehat{L}_{W[-\infty, 0]}\oplus ((p^0_{-\infty} \oplus \varpi)\circ \hat{r})$ is also Fredholm.

The remaining task is to show that
\begin{align}
\label{KerCoker2}
\begin{cases} 
\Ker (L_{W[-\infty, 0]}\oplus (p^0_{-\infty}\circ r)) \cong \Ker (\widehat{L}_{W[-\infty, 0]}\oplus ((p^0_{-\infty} \oplus \varpi)\circ \hat{r})) \\  
\operatorname{Coker} (L_{W[-\infty, 0]}\oplus (p^0_{-\infty}\circ r)) \cong \operatorname{Coker}   (\widehat{L}_{W[-\infty, 0]}\oplus ((p^0_{-\infty} \oplus \varpi)\circ \hat{r})). 
\end{cases} 
\end{align}
The assertion of the \lcnamecref{prop:compari op} immediately follows from this and \eqref{KerCoker1}.
But applying the snake lemma to the following commutative diagram between functional spaces over $W[-\infty, 0]$, we can obtain \eqref{KerCoker2}:
\[
\begin{CD}
0  @. 0 \\ 
  @V VV  @VVV    \\
L^2_{k, \delta}(i\Lambda^1\oplus S^+)_{CC}@>{L_{W[-\infty, 0]}\oplus p^0_{-\infty}\circ r}>>  L^2_{k-1, \delta}(i\Lambda^+\oplus S^-)\oplus V^0_{-\infty} \\ 
 @V{} VV  @V{}VV    \\
L^2_{k, \delta}(i\Lambda^1\oplus S^+) @>{\widehat{L}_{W[-\infty, 0]}\oplus ((p^0_{-\infty} \oplus \varpi)\circ \hat{r}) }>>    L^2_{k-1, \delta}(i\Lambda^0\oplus i\Lambda^+\oplus S^-)\oplus V^0_{-\infty}\oplus W( Y ) \\ 
   @V{d^{*_\delta}\oplus \varpi \circ \widehat{r}} VV  @VVV    \\
 L^2_{k-1, \delta}(i\Lambda^0) \oplus W( Y )@=   L^2_{k-1, \delta}(i\Lambda^0)\oplus W( Y )   \\ 
 @V VV  @VVV    \\
 0  @. 0. \\ 
 \end{CD}
\]
\end{proof}

We consider a Riemannian manifold 
\[
\hat{W}[-\infty, 0]  :=  W[-\infty, 0] \cup (\R^{\geq 0} \times Y)
\]
obtained by gluing the half-cylinder $( \R^{\geq 0} \times Y, dt^2+ g_Y)$ with $W[-\infty, 0]$ along their boundary. 
We will compare formal adjoints $d^*$ for several weights, and would like to introduce a family of weight functions 
\[
\tau_{\delta, \delta'}: \hat{W}[-\infty, 0]\to \R_{\geq 0}
\]
 such that 
\[
(\tau_{\delta, \delta'})|_{{W}[-\infty, -1]} = \delta \tau \text{\quad  and\quad } (\tau_{\delta, \delta'})|_{[1,\infty) \times Y } = \delta'  t. 
\]
\begin{defi}
Let $(\delta, \delta') \in  \R^2$.
For a bundle $E$ which is periodic on ${W}[-\infty, 0]$ and cylindrical on $[0,\infty) \times Y$, we define the norm $\| - \|_{ L^2_{k, (\delta, \delta' ) } (E)} $ by
\[
\| f\|_{ L^2_{k, (\delta, \delta') } (E)}  :=  \|e^{\tau_{\delta, \delta'} } f \|_{L^2_{k } (E)} 
 \]
 and define $L^2_{k, (\delta, \delta' ) } (E)$ to be the completion of compactly supported sections with respect to $\| - \|_{ L^2_{k, (\delta, \delta' ) } (E)} $. 
\end{defi}

Note that the formal adjoint with respect to $L^2_{(\delta, \delta')}$ of $d$ is given as 
\[
d^{*_{(\delta, \delta')}} (w ) =e^{-\tau_{\delta, \delta'} }  d^* (e^{\tau_{\delta, \delta'} }  w). 
\]

We also consider the `sliced' Atiyah--Hitchin--Singer operator with APS-boundary condition: 
\begin{align}\label{AHS'}
\begin{split}
d^+  +  {p}^0_{-\infty} \circ  {r} :  L^2_{k, \delta}(i \Lambda_{W[-\infty, 0] }^1)_{CC}   
 \to  L^2_{k-1,  \delta}(i \Lambda_{W[-\infty, 0] }^+  )\oplus {V}^0_{-\infty}  (Y; \R). 
 \end{split}
 \end{align}
We calculate the kernel and the cokernel of \eqref{AHS'} :
\begin{theo} \label{cohomology} There exists $\delta_1 >0$ such that for any $\delta \in (0, \delta_1)$,
the operator \eqref{AHS'} is an isomorphism. 
\end{theo}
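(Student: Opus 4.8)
The plan is to identify the kernel and cokernel of the operator \eqref{AHS'} with genuine $L^2$-cohomology on the completed manifold $\hat W[-\infty,0]$, and then to show both vanish for small positive weight $\delta$ by a combination of the acyclicity on $\wt X$ established in \cref{allops} and a standard boundary-contribution analysis. First I would relate \eqref{AHS'}, the sliced operator on the double Coulomb slice, to the unsliced operator \eqref{AHS} by the same snake-lemma/patching mechanism used in the proof of \cref{prop:compari op}: the kernels and algebraic cokernels of the two agree for $\delta\in(0,\delta_0)$, so it suffices to understand $\Ker$ and $\Coker$ of $d^{*_\delta}+d^+ + \widehat p^0_{-\infty}\circ\widehat r$. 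The role of the APS projection $\widehat p^0_{-\infty}$ is precisely to make solutions extend over the half-cylinder $\R^{\geq0}\times Y$ and decay there; I would phrase this by passing to $\hat W[-\infty,0]$ and to the weight function $\tau_{\delta,\delta'}$ with $\delta'>0$ small, so that an element of the kernel corresponds to a $1$-form $a$ with $d^{*_{(\delta,\delta')}}a=0$, $d^+a=0$, decaying both into the periodic end and into the cylindrical end.

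Next I would show the kernel is zero. The conditions $d^+a=0$ and $d^{*}a=0$ (with the $\delta$-twisted adjoint, but the twist is a harmless conjugation) say $a$ is, up to the exponential weight, a harmonic $1$-form that is anti-self-dual-free, i.e. $(d+d^*)a=0$ in the appropriate weighted sense, hence $a$ is closed and coclosed. On the periodic end, positivity of $\delta$ forces decay, and on $\wt X$ the Atiyah--Hitchin--Singer complex is acyclic for weights in $(0,\delta_0)$ by \cref{allops} and \cref{fred}; on the cylindrical end, positivity of $\delta'$ together with the APS boundary condition kills the constant (harmonic) modes along $Y$, so an integration-by-parts / unique-continuation argument gives $a=0$. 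This is essentially the weighted analogue of the vanishing of $L^2$-harmonic forms in the $b_1$-controlled setting, and I would cite the corresponding computation on $\wt X$ from \cref{allops} as the main input.

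For the cokernel, I would dualize: the cokernel of \eqref{AHS'} pairs with the kernel of the formal adjoint, which is the sliced AHS operator on $\hat W[-\infty,0]$ with the \emph{opposite} weight $-\delta$ and the complementary (non-negative eigenspace) APS boundary condition. By the same acyclicity on $\wt X$ — note $-\delta$ also lies outside the discrete set $\mathcal D$ once $\delta_1$ is chosen small enough, since $\mathcal D$ has no accumulation point — and the same end analysis, with the sign of $\delta'$ now arranged to match, this adjoint kernel is also trivial. Since \cref{prop:compari op} (applied to the sliced operator, via the snake-lemma argument) already gives that \eqref{AHS'} is Fredholm, vanishing of both kernel and cokernel yields that it is an isomorphism; taking $\delta_1\le\delta_0$ small enough that all the genericity conditions on $\pm\delta$ hold simultaneously finishes the proof.

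The main obstacle I anticipate is the end analysis at the \emph{cylindrical} end: making precise that the APS projection $p^0_{-\infty}$ together with the weight $\delta't$ forces decay and removes exactly the harmonic part $b_0(Y)\oplus H^1(Y)$ along $Y$, so that no spurious reducibles or translation-invariant solutions survive. One must be careful that the double Coulomb condition $d^*{\bf t}a=0$ interacts correctly with the boundary values, and that the choice of $\delta'$ (its sign) is compatible for both the operator and its adjoint; this is the point where the argument genuinely uses that $Y$ is a \emph{rational homology sphere} rather than an arbitrary $3$-manifold, and where I would expect to borrow the relevant estimates from Khandhawit~\cite{Kha15} and from \cite{KT20}.
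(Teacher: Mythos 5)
Your skeleton — relate the sliced operator \eqref{AHS'} to the unsliced \eqref{AHS} via the snake-lemma mechanism of \cref{prop:compari op}, pass to the APS picture on $\hat W[-\infty,0]$ to identify kernel and cokernel with weighted $L^2$-solution spaces, convert $d^+a=0$ to $da=0$ by integration by parts, and then argue vanishing — does match the paper's strategy at a coarse level. However, the decisive step, the vanishing of those two solution spaces, has a genuine gap in your account.

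You justify the vanishing by invoking the acyclicity of the AHS complex on $\wt X$ (from \cref{allops}/\cref{fred}) together with a vague ``end analysis / unique continuation.'' But the acyclicity in \cref{allops} lives on the \emph{full} $\Z$-cover $\wt X = W[-\infty,\infty]$, whereas a kernel element here lives on $\hat W[-\infty,0]$, a manifold with only one periodic end and a cylindrical end. There is no way to transport a weighted-harmonic $1$-form on $\hat W[-\infty,0]$ to a solution on $\wt X$ (cutting it off destroys harmonicity), so the acyclicity on $\wt X$ cannot be applied directly, and unique continuation does not rescue the argument because the issue is global, not local. The paper's actual vanishing argument (\cref{lem: ker coker key}, following Taubes~\cite{T87}) is quite different in mechanism: it constructs an explicit homomorphism $\phi$ from the kernel (resp.\ cokernel) into the relative cohomology $H^1$ (resp.\ $H^2$) of the \emph{compact} truncation $W[-n,0]\cup[0,n]\times Y$, proves $\phi$ injective by the Taubes bump-function argument, and then shows the image is zero --- for $H^1$ by an explicit computation contradicting decay on the ends, and for $H^2$ by the homological hypothesis $H^2(W[-n,0],\partial W[-n,0];\R)=0$. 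This is the step your proposal is missing, and it genuinely uses that $W_0$ is a rational homology cobordism, not just that $Y$ is a rational homology sphere. Your proposal to treat the cokernel by passing to the formal adjoint at weight $-\delta$ is a reasonable alternative in principle, and does differ from the paper (which uses \cref{cohomology2} to describe the cokernel directly as extended $L^2$-solutions and then applies the same injective-map-to-compact-cohomology device), but it inherits the same unresolved vanishing problem: you would still need a concrete reason why the adjoint kernel vanishes, and ``the same acyclicity and end analysis'' is not such a reason.
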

We take the constant $\delta_1$ to be smaller than $\delta_0$ given in \cref{rem deltazero}. 
The rest of this subsection is devoted to prove \cref{cohomology}.

To prove \cref{cohomology}, it is sufficient to prove the operator \eqref{AHS} is invertible for a sufficiently small $\delta>0$. 
First we shall calculate the kernel and the cokernel of 
\begin{align}
    \label{eq: AHS''}
    \begin{split}
d^{*_\delta} + d^+  +  \widehat{p}^0_{-\infty} \circ  \widehat{r} :  L^2_{k, \delta}(i \Lambda_{W[-\infty, 0] }^1)  \\ 
 \to  L^2_{k-1,  \delta}(i\Lambda_{W[-\infty, 0] }^0   \oplus \Lambda_{W[-\infty, 0] }^+  )\oplus \widehat{V}^0_{-\infty}  (Y; \R). 
 \end{split}
 \end{align}
 The following lemma can be proved by considering the similar discussion given in \cite{APSI}. 
 \begin{lem}\label{cohomology2}We have the following identifications: 
 \[
 \begin{cases} 
 \Ker(d^{*_\delta} + d^+  +  \widehat{p}^0_{-\infty} \circ  \widehat{r})  = \Set{ a \in L^2_{k, (\delta, 0) }(i \Lambda_{\hat{W}[-\infty, 0] }^1)  | d^{*_{(\delta,0)}} a =0,   d^+a =0 },  \\
 \operatorname{Coker}(d^{*_\delta} + d^+  +  \widehat{p}^0_{-\infty} \circ  \widehat{r})   \\ 
  = \Set{ (0,  b ) \in  L^2_{k-1,(\delta, 0)}(i\Lambda_{\hat{W}[-\infty, 0] }^0   \oplus \Lambda_{\hat{W}[-\infty, 0] }^+  ) |   d^{*_{(\delta,0)}} b=0   } . 
 \end{cases} 
 \]

 \end{lem}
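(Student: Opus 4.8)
The plan is to identify the kernel and cokernel of the APS-boundary-value operator \eqref{eq: AHS''} on $W[-\infty,0]$ with those of the corresponding operator on the manifold-with-cylindrical-end $\hat{W}[-\infty,0]$, exactly as in the original computation of Atiyah--Patodi--Singer. First I would recall the standard dictionary: a section $a$ on $W[-\infty,0]$ lying in the kernel of $d^{*_\delta}+d^++\widehat p^0_{-\infty}\circ\widehat r$ means that $a$ satisfies $d^{*_\delta}a=0$, $d^+a=0$, and the boundary restriction $\widehat r(a)$ has vanishing projection onto the span of the non-positive eigenspaces of $\widehat l$; equivalently, $\widehat r(a)$ lies in the span of the strictly positive eigenspaces of $\widehat l$. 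On the half-cylinder $[0,\infty)\times Y$ the equation $d^{*_{(\delta,0)}}a=0$, $d^+a=0$ becomes (after using the product structure) an ODE in the cylinder variable $t$ whose solutions are sums $\sum_{\mu}c_\mu e^{-\mu t}\xi_\mu$ over eigenvectors $\xi_\mu$ of $\widehat l$; the requirement that $a\in L^2_{k,(\delta,0)}$ with the weight being $0$ on the cylindrical end forces only the components with $\mu>0$ to survive, i.e. exactly the positive-eigenspace boundary condition. Thus extending a kernel element of \eqref{eq: AHS''} by the exponentially decaying solution of the ODE on the attached half-cylinder gives an $L^2_{k,(\delta,0)}$ solution of $d^{*_{(\delta,0)}}a=0$, $d^+a=0$ on $\hat W[-\infty,0]$, and conversely restriction gives the inverse map. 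This establishes the first identification.

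For the cokernel I would use the same device applied to the formal adjoint. The cokernel of \eqref{eq: AHS''} is computed as the kernel of the adjoint operator; since we work with $L^2_\delta$-adjoints, the adjoint of $d^{*_\delta}+d^+$ acting on $i\Lambda^0\oplus i\Lambda^+$ is $d+d^{*_\delta,+}$ (the first-order operator whose $0$-form output is killed on a harmonic piece), and the APS term contributes the complementary boundary projection, namely onto the non-negative eigenspaces of $\widehat l$ (the orthogonal complement, at the level of the adjoint, of the positive-eigenspace condition used above, shifted by the zero eigenspace convention). Translating this boundary condition through the cylinder ODE in the same way shows that a cokernel element is represented by a pair $(0,b)$ that extends to an $L^2_{k-1,(\delta,0)}$ solution on $\hat W[-\infty,0]$ of $d^{*_{(\delta,0)}}b=0$ (together with $b$ being closed in the appropriate self-dual sense, which on the end again forces exponential decay). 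The vanishing of the $i\Lambda^0$ component is seen on the cylinder: any nonzero $0$-form component would be a harmonic function with the wrong decay, contradicting the weight. This gives the second identification.

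The main obstacle I anticipate is the careful bookkeeping of which eigenspaces (strictly positive versus non-negative, and the role of the zero eigenspace of $\widehat l$, i.e. the harmonic forms on $Y$, which here is $i\R^{b_0(Y)}\oplus i\R^{b_1(Y)}$ but $b_1(Y)=0$ since $Y$ is a rational homology sphere) and, correspondingly, which exponential rates $e^{-\mu t}$ are admissible for a given weight $(\delta,0)$ on the cylindrical end. One must check that the weight $\delta'=0$ on the cylinder is in the allowable range so that no zero modes of $\widehat l$ are picked up spuriously and so that the APS projection $\widehat p^0_{-\infty}$ matches exactly the boundary behavior of $L^2$-solutions; this is where the hypothesis $\delta\in(0,\delta_0)$ and the genericity from \cref{fred} enter, ensuring both the Fredholm property (already granted by \cref{fredd}) and that the identification of boundary conditions is clean. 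I would also need to verify that the gluing/splitting maps are bounded in both directions, which follows from elliptic estimates on $W[-\infty,0]$ and explicit exponential estimates on the half-cylinder, but this is routine once the eigenspace conventions are pinned down. Everything else — that $d^+a=0$ plus $d^{*_{(\delta,0)}}a=0$ is exactly the condition appearing on the right-hand side, and that the $i\Lambda^0$-component of a cokernel vector vanishes — then reads off directly, mirroring \cite{APSI}.
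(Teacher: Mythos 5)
Your approach matches the paper's at the level of strategy: both translate the APS spectral boundary condition into solvability on the attached half-cylinder, following \cite[Proposition 3.11]{APSI}. The identification of the kernel is correct and is presented essentially as in the paper.

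There is, however, a genuine gap in your cokernel argument. The APS dictionary identifies the cokernel of the boundary-value problem not with ordinary $L^2_{(\delta,0)}$-solutions of the adjoint equation but with \emph{extended} $L^2$-solutions on $\hat{W}[-\infty,0]$: pairs $(a,b)$ with $da=0$, $d^{*_{(\delta,0)}}b=0$, with $(a,b)|_{W[-\infty,0]}\in L^2_{k-1,\delta}$, and with $(a-c,b)$ lying in $L^2_{k-1}$ on the cylindrical end $[0,\infty)\times Y$ for \emph{some} constant $c\in\R$. The zero eigenspace of $\widehat{l}$ on $Y$ is one-dimensional (it is the constants, since $b_0(Y)=1$ and $b_1(Y)=0$), and precisely because of this zero mode the extended condition allows a nonzero limiting constant on the cylinder. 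So your claim that ``the vanishing of the $i\Lambda^0$ component is seen on the cylinder'' misattributes where the constant is killed: the weight $\delta'=0$ on the cylinder does \emph{not} rule out the constant for an extended solution, which is the whole point of the extended class. What actually forces $a\equiv 0$ is the weight $\delta>0$ on the \emph{periodic} end: since $da=0$ on the connected manifold $\hat{W}[-\infty,0]$ forces $a$ to be a global constant, and since $\tau\to+\infty$ along the periodic end so that a nonzero constant fails to lie in $L^2_{k-1,\delta}(W[-\infty,0])$, one concludes $a=0$ and hence $c=0$. This is the step your argument leaves out, and it is exactly what the paper's proof addresses with the displayed description of the cokernel as extended solutions followed by the remark that $da=0$ forces $a$ constant and $c=0$. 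You do flag the zero-eigenspace bookkeeping as a concern in your final paragraph, but the argument as written identifies the cokernel with the wrong function space and therefore reaches the right conclusion by a reasoning that would not survive scrutiny if the periodic end carried weight $0$ as well.
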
 
 \begin{proof}
 By the same discussion in  \cite[Proposition 3.11]{APSI}, 
 a solution under the spectral boundary condition can be identified with an $L^2$-solution on a cylindrical end manifold.
 Thus one has an isomorphism
 \[
  \Ker (d^{*_\delta} + d^+  +  \widehat{p}^0_{-\infty} \circ  \widehat{r})   \cong \Set{ a \in L^2_{k, (\delta, 0) }(i \Lambda_{\hat{W}[-\infty, 0] }^1)  | d^{*_{(\delta,0)}} a =0,   d^+a =0 },
  \]
  and the cokernel can be written by using extended $L^2_k $-solutions: 
 \begin{align*} 
 & \operatorname{Coker}  (d^{*_\delta} + d^+  +  \widehat{p}^0_{-\infty} \circ  \widehat{r} )   \\ 
 \cong& 
 \Set{ 
 (a, b ) 
   |  
  \begin{matrix}
  & d^{*_{(\delta,0)}} b=0 , \ d a=0, \\
  & (a,b)|_{W[-\infty, 0]} \in L^2_{k-1,  \delta }(i\Lambda_{W[-\infty, 0] }^0   \oplus \Lambda_{W[-\infty, 0] }^+  ) , \\
  &  (a- c , b ) \in L^2_{k-1  }(i\Lambda_{[0, \infty) \times Y  }^0   \oplus \Lambda_{[0, \infty) \times Y }^+  ), \ \exists c \in \R    
  \end{matrix}
  } \\
  \subset & L^2_{k-1,  \text{loc}}(i\Lambda_{\hat{W}[-\infty, 0] }^0   \oplus \Lambda_{\hat{W}[-\infty, 0] }^+  ) . 
 \end{align*} 
 Here we used $H^*(Y; \R) \cong H^* (S^3; \R)$. 
On the other hand, $da=0$ implies $a$ is a constant and $c$ should be zero. This gives the conclusion. 
 \end{proof}

\begin{lem}
\label{identify}
For $\delta$ sufficiently small, the space 
\[
\Set{ a \in L^2_{k, (\delta, 0) }(i \Lambda_{\hat{W}[-\infty, 0] }^1)   |  d^{*_{(\delta,0)}} a =0,   da =0 }
\]
can be identified with the middle cohomology of 
\[
L^2_{k+1, (\delta, \delta) }(i \Lambda_{\hat{W}[-\infty, 0] }^0)  \xrightarrow{d} L^2_{k, (\delta, \delta) }(i \Lambda_{\hat{W}[-\infty, 0] }^1)  \xrightarrow{d}  L^2_{k-1, (\delta, \delta) }(i \Lambda_{\hat{W}[-\infty, 0] }^2).
\] 
\end{lem}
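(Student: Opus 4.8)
The plan is to compare the two weight systems $(\delta,0)$ and $(\delta,\delta)$ on the cylindrical-plus-periodic end manifold $\hat W[-\infty,0]$, and to use that for $\delta>0$ sufficiently small the closed-and-coclosed $1$-forms in the two weighted spaces agree and all represent de Rham cohomology classes that vanish. First I would set up the comparison: on the periodic part $W[-\infty,-1]$ the two weight functions coincide (both equal $\delta\tau$), so the only difference lives on the cylindrical end $[0,\infty)\times Y$, where one weight is $e^{0}=1$ (genuine $L^2$) and the other is $e^{\delta t}$. On a half-cylinder over $Y\simeq_{\mathbb R}S^3$, separation of variables shows that a harmonic (closed and coclosed) $1$-form decays either at a rate dictated by a nonzero eigenvalue of $\hat l$ — hence exponentially, so it lies in $L^2_{(\delta,\delta)}$ automatically for small $\delta$ — or it is asymptotic to a constant coming from $H^*(Y;\mathbb R)$; but since $H^1(Y;\mathbb R)=H^0(Y;\mathbb R)$-type contributions only produce $dt$-components and $Y$ is a rational homology sphere, the only surviving asymptotic constant would be along a harmonic $0$-form on $Y$, which for a closed $1$-form means the $dt$ direction, and that is excluded by $d^{*_{(\delta,0)}}a=0$ together with $da=0$. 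Hence $\{a\in L^2_{k,(\delta,0)}:d^{*_{(\delta,0)}}a=0,\ da=0\}=\{a\in L^2_{k,(\delta,\delta)}:d^{*_{(\delta,\delta)}}a=0,\ da=0\}$ for $\delta\in(0,\delta_1)$ after possibly shrinking $\delta_1$.

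Next I would identify the latter space with the middle cohomology of the weighted de Rham complex in weight $(\delta,\delta)$. The containment $\{d^{*_{(\delta,\delta)}}a=0,\ da=0\}\hookrightarrow H^1$ of the complex is formal (a closed form is automatically a cocycle, and two such forms differing by $d f$ with $f\in L^2_{k+1,(\delta,\delta)}$ are equal only if the $d^{*_{(\delta,\delta)}}$-component forces the difference to vanish). For surjectivity and injectivity one uses that the weighted de Rham complex in weight $(\delta,\delta)$ is Fredholm — this follows from \cref{fred}, \cref{allops} applied on $\wt X$ patched with the standard APS-type parametrix on the cylindrical end, exactly as in \cref{fredd} — together with the weighted Hodge decomposition $L^2_{k,(\delta,\delta)}(i\Lambda^1)=\operatorname{im}d\oplus(\ker d\cap\ker d^{*_{(\delta,\delta)}})\oplus\operatorname{im}d^{*_{(\delta,\delta)}}$, valid once the relevant Laplacians are Fredholm. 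Then each cohomology class has a unique harmonic (closed-and-coclosed) representative, giving the desired identification. The constant $\delta_1$ is chosen to lie below $\delta_0$ and below the first positive number in the discrete set $\mathcal D$ of \cref{fred} for the de Rham complex, and small enough that the exponential decay rates on the cylindrical end beat $e^{\delta t}$.

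The main obstacle I expect is the analysis on the cylindrical end $[0,\infty)\times Y$: one must rule out, for small $\delta>0$, that passing from the honest-$L^2$ weight to the $e^{\delta t}$ weight changes the space of closed-and-coclosed $1$-forms — i.e. that no new "extended" solution (asymptotic to a nonzero constant form) appears and no genuine-$L^2$ solution is lost. This is precisely the APS-style indicial-root computation: the jump in the kernel as the weight crosses a critical exponent is governed by the cokernel of the corresponding tangential operator $\hat l$ on $Y$, and one has to check that $0$ is the only critical exponent in a neighborhood and that the cohomology $H^*(Y;\mathbb R)\cong H^*(S^3;\mathbb R)$ contributes nothing in degree $1$ compatible with the coclosed constraint. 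Once this is settled the Fredholm/Hodge-theoretic identification is routine, so the bulk of the work — and the reason $\delta$ must be taken "sufficiently small" — sits in this end analysis. I would also double-check that the same small $\delta$ works simultaneously in \cref{cohomology2}, so that the chain \eqref{eq: AHS''} $\to$ closed-coclosed $1$-forms in weight $(\delta,0)$ $\to$ weight $(\delta,\delta)$ de Rham $H^1$ is internally consistent.
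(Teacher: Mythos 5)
Your proposal follows the same two-step strategy as the paper: first pass from weight $(\delta,0)$ to weight $(\delta,\delta)$ using exponential decay on the cylindrical end, then invoke Fredholmness of the $(\delta,\delta)$-weighted de Rham complex together with the ensuing Hodge decomposition to identify closed-coclosed $1$-forms with the middle cohomology. One small caveat: the literal set equality you assert between $\{d^{*_{(\delta,0)}}a=0,\,da=0\}$ and $\{d^{*_{(\delta,\delta)}}a=0,\,da=0\}$ is not quite right, because the two weighted coclosed conditions are different equations (they differ by the contraction term $\iota_{\nabla(\tau_{\delta,\delta}-\tau_{\delta,0})}a=\delta\,a_t$ on the cylindrical end); the paper instead records an isomorphism induced by the multiplication map $f\mapsto e^{\tau_{\delta,\delta}-\tau_{\delta,0}}f$, which identifies the two spaces after Hodge projection in the target weight rather than pointwise. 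This is a presentational nuance rather than a failure of the argument, and your identification of the indicial-root analysis at the cylindrical end as the crux of ``$\delta$ sufficiently small'' is exactly what the paper's appeal to the exponential decay result encodes.
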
 
\begin{proof}
By the exponential decay result, one can see the correspondence 
\[
f \mapsto e^{\tau_{\delta, \delta}  -\tau_{\delta, 0 } } f
\]
 gives an identification
\[
\Set{ a \in L^2_{k, (\delta, 0) }(i \Lambda_{\hat{W}[-\infty, 0] }^1)   |  e^{-\tau_{\delta, 0 } }  d^* (e^{\tau_{\delta, 0 } }  b) =0,   da =0 } 
\]
\[
\to \Set{ a \in L^2_{k, (\delta, \delta ) }(i \Lambda_{\hat{W}[-\infty, 0] }^1)   |  e^{-\tau_{\delta, \delta} }  d^* (e^{\tau_{\delta, \delta} }  b) =0 ,   da =0 }. 
\]

On the other hand,
for an appropriate $\delta$, the complex 
\begin{align}\label{complex}
L^2_{k, (\delta, \delta) }(i \Lambda_{\hat{W}[-\infty, 0] }^0)   \xrightarrow{d}  L^2_{k, (\delta, \delta) }(i \Lambda_{\hat{W}[-\infty, 0] }^1) \xrightarrow{d}  L^2_{k, (\delta, \delta) }(i \Lambda_{\hat{W}[-\infty, 0] }^2)
\end{align}
is Fredholm, and we can identify the kernel of 
\[
L^2_{k, (\delta, \delta) }(i \Lambda_{\hat{W}[-\infty, 0] }^1)  \xrightarrow{d+d^{*_{(\delta, \delta)} } } L^2_{k-1, (\delta, \delta) }(i \Lambda_{\hat{W}[-\infty, 0] }^0\oplus i \Lambda_{\hat{W}[-\infty, 0] }^2)
\]
with the middle cohomology of \eqref{complex}. 
\end{proof} 

The exponential decay result enables us to prove the following correspondence: 
\begin{lem}
\label{lem:deldel del zero}
For $\delta>0$ sufficiently small, 
the space 
\[
\Set{ b \in  L^2_{k-1, (\delta, 0)  }(i \Lambda_{\hat{W}[-\infty, 0] }^+  )  |    d^{*_{(\delta, 0)} } b=0   } 
\]
 can be identified with 
\[
\Set{ b \in  L^2_{k-1, (\delta, \delta)  }(i \Lambda_{\hat{W}[-\infty, 0] }^+  ) |    d^{*_{(\delta, \delta)}}  b=0   } .
\]
\end{lem}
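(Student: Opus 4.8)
The plan is to reduce the statement to the behaviour on the cylindrical end $[0,\infty)\times Y$ of $\hat W[-\infty,0]$ and then to invoke exponential decay of solutions of the relevant elliptic equation there, exactly in the spirit of the argument (following \cite{APSI}) already used in \cref{cohomology2} and its companion \cref{identify}. The feature that makes this work is that $Y$ is a rational homology sphere, so that the tangential operator controlling self-dual $2$-forms on the cylinder has no zero modes once the $H^{0}(Y)$-contribution is accounted for.

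First I would observe that $\tau_{\delta,0}$ and $\tau_{\delta,\delta}$ coincide on $W[-\infty,-1]$ (both equal $\delta\tau$) and differ only over the compact piece $W_{0}$ together with the end $[0,\infty)\times Y$, where on $[1,\infty)\times Y$ they equal $0$ and $\delta t$ respectively; hence the two weighted $L^2_{k-1}$-norms are uniformly equivalent away from the end, and the content of the lemma concerns only the asymptotics as $t\to\infty$. On the cylinder the weight $\tau_{\delta,0}$ is constant, so a self-dual $2$-form $b$ with $d^{*_{(\delta,0)}}b=0$ is genuinely harmonic there, and under the usual identification of $\Lambda^{+}$ on $\R\times Y$ with $1$-forms on $Y$ it is governed by a translation-invariant first-order operator whose tangential part has kernel $H^{0}(Y;\R)\oplus H^{1}(Y;\R)$. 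The $H^{0}$-part has already been dealt with in \cref{cohomology2} — it is exactly why the $\Lambda^{0}$-component of a cokernel element vanishes — and $H^{1}(Y;\R)=0$ since $Y$ is a rational homology $3$-sphere; therefore, on the relevant summand this operator has a spectral gap $\mu_{0}>0$ depending only on $(Y,g_{Y})$.

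Consequently, for $\delta$ small (smaller than $\mu_{0}$ and than the constants $\delta_{0},\delta_{1}$ of \cref{rem deltazero} and \cref{cohomology}), the exponential decay estimate shows that any $b$ in the $(\delta,0)$-space decays like $O(e^{-\mu_{0}t})$ on the end, so that $e^{\delta t}b$ is still $L^{2}$ there, and symmetrically the $(\delta,\delta)$-weighted solutions of the corresponding equation also decay. The rescaling $b\mapsto e^{\tau_{\delta,0}-\tau_{\delta,\delta}}b$, which is the identity off the end and multiplication by $e^{-\delta t}$ near infinity, is then a well-defined isomorphism between the two spaces: it preserves self-duality, stays within the appropriate weighted spaces thanks to the decay, and, since $e^{\tau_{\delta,\delta}}\!\cdot e^{\tau_{\delta,0}-\tau_{\delta,\delta}}b=e^{\tau_{\delta,0}}b$, it intertwines the constraint $d^{*_{(\delta,0)}}b=0$ with $d^{*_{(\delta,\delta)}}b=0$. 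This produces the asserted identification.

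I expect the exponential-decay step to be the only genuine obstacle. One must pin down precisely the tangential operator for the self-dual $2$-form part on $[0,\infty)\times Y$, check that its kernel is exactly the image of $H^{1}(Y;\R)$ — hence zero — and then promote the resulting spectral gap into a decay estimate on $\hat W[-\infty,0]$ compatible with the APS boundary condition used in \cref{cohomology2}. The remaining points, namely the equivalence of the two norms away from the end and the behaviour of $*$ and $d^{*}$ under the rescaling by $e^{\tau_{\delta,0}-\tau_{\delta,\delta}}$, are routine.
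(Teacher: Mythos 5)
Your proof is correct and lands on essentially the same argument the paper intends, since the paper's own proof simply points to \cref{identify} (exponential decay on the cylinder followed by the weight-change $b\mapsto e^{\tau_{\delta,0}-\tau_{\delta,\delta}}b$), which is exactly your route.

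One remark worth making, however, is that for \emph{this} lemma the exponential-decay input you single out as ``the only genuine obstacle'' is in fact superfluous. By the very definition of the weighted norms, the map $\Psi(b)=e^{\tau_{\delta,0}-\tau_{\delta,\delta}}b$ satisfies
\[
\|\Psi(b)\|_{L^2_{k-1,(\delta,\delta)}} = \|e^{\tau_{\delta,\delta}}\Psi(b)\|_{L^2_{k-1}} = \|e^{\tau_{\delta,0}}b\|_{L^2_{k-1}} = \|b\|_{L^2_{k-1,(\delta,0)}},
\]
so $\Psi$ is tautologically an isometry of weighted Sobolev spaces with no decay hypothesis at all; and since
\[
d^{*_{(\delta,\delta)}}\!\bigl(\Psi(b)\bigr) = e^{-\tau_{\delta,\delta}}d^*\!\bigl(e^{\tau_{\delta,0}}b\bigr) = e^{\tau_{\delta,0}-\tau_{\delta,\delta}}\,d^{*_{(\delta,0)}}b = \Psi\!\bigl(d^{*_{(\delta,0)}}b\bigr),
\]
it exactly intertwines the two constraints. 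The reason decay is genuinely needed in \cref{identify}, and not here, is the extra closedness condition $da=0$ there: that equation is \emph{not} conjugated into itself by $\Psi$, so one cannot match the two kernels directly and must instead compare them through harmonic representatives of a Fredholm complex, which is where the spectral gap and exponential decay enter. For the present lemma only the single constraint $d^{*_{(\delta,\cdot)}}b=0$ appears, and that one conjugates exactly, so the weight-change alone does the job.
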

\begin{proof}
The proof is the same as in that of \cref{identify}. 
\end{proof}

\begin{lem}
\label{lem: ker coker key}
We have that
\begin{align}
\label{eq: ker dast d}
\Set{ a \in L^2_{k, (\delta, 0) }(i \Lambda_{\hat{W}[-\infty, 0] }^1)   |  d^{*_{(\delta,0)}} a =0,   da =0 } = \{0\},
\end{align}
and one can construct an injective homomorphism 
\begin{align*}
&\Set{ (0,  b ) \in  L^2_{k-1, (\delta, 0)  }(i\Lambda_{\hat{W}[-\infty, 0] }^0   \oplus \Lambda_{\hat{W}[-\infty, 0] }^+  )  |    d^{*_{(\delta,0)}} b=0   } \\ 
&\to  H^2 (W[-n, 0]  \cup [0,n]\times Y , \partial (W[-n, 0]  \cup [0,n]\times Y)  ; \R ) 
\end{align*}  
for $n$ sufficiently large. 
\end{lem}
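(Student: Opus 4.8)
The plan is to handle the two assertions separately, in each case reducing to a statement about harmonic forms on $\hat{W}:=\hat{W}[-\infty,0]$ and then using that $\hat{W}$ has the rational homology of $S^3$ together with exponential decay of $L^2$ harmonic forms into the two ends. For \eqref{eq: ker dast d}: by \cref{identify} the space in question is isomorphic to the degree-$1$ cohomology of the weighted de Rham complex $L^2_{k,(\delta,\delta)}(i\Lambda^\bullet_{\hat{W}})$, $\bullet=0,1,2$, equivalently (as in the proof of \cref{identify}) to $\Ker(d+d^{*_{(\delta,\delta)}})$ on $1$-forms. I would then identify this weighted $L^2$ de Rham group with $\operatorname{Im}\!\big(H^1_c(\hat{W};\R)\to H^1(\hat{W};\R)\big)$: near the cylindrical end this is the Atiyah--Patodi--Singer picture (using $H^1(Y;\R)=0$), and near the periodic end it follows from the acyclicity on $\widetilde{X}$ recorded in \cref{allops} via \cref{fred}. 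Since $\hat{W}\simeq W[-\infty,0]$, a repeated Mayer--Vietoris computation using that $X$ is a rational homology $S^1\times S^3$ gives $H_*(\hat{W};\Q)\cong H_*(S^3;\Q)$, with $H_3$ generated by a cross-section $[Y]$; by Poincar\'e duality $H^1_c(\hat{W};\R)\cong H_3(\hat{W};\R)\cong\R$ is generated by the Poincar\'e dual of such a cross-section. But the cross-section $\{0\}\times Y\subset\hat{W}$ separates $\hat{W}$, so its Poincar\'e dual pairs trivially with $H_1(\hat{W};\R)$ and therefore maps to $0$ in $H^1(\hat{W};\R)$. Hence the group vanishes and \eqref{eq: ker dast d} follows.

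For the injection, given an element $(0,b)$ of the displayed space, set $\tilde{b}:=e^{\tau_{\delta,0}}b$. The identity $d^{*_{(\delta,0)}}b=0$ says $d^{*}\tilde{b}=0$, and since $\tilde{b}$ is self-dual we have $d^{*}\tilde{b}=-*\,d\tilde{b}$, so $d\tilde{b}=0$ as well; thus $\tilde{b}$ is a harmonic self-dual $2$-form, and $b\in L^2_{k-1,(\delta,0)}$ translates into $\tilde{b}\in L^2_{k-1}(\hat{W})$. By elliptic regularity and the standard exponential decay of $L^2$ harmonic forms on manifolds with cylindrical and with periodic ends (the latter again via \cref{fred}), $\tilde{b}$ is smooth and decays exponentially into both ends. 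For $n\geq1$ put $M_n:=W[-n, 0]\cup([0,n]\times Y)\subset\hat{W}$; its boundary is two copies of the rational homology sphere $Y$, so the long exact sequence of $(M_n,\partial M_n)$ yields an isomorphism $H^2(M_n,\partial M_n;\R)\cong H^2(M_n;\R)$. Define
\[
\Phi_n\colon (0,b)\longmapsto [\tilde{b}|_{M_n}]\in H^2(M_n;\R)\cong H^2(M_n,\partial M_n;\R),
\]
which is manifestly a homomorphism.

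It then remains to see that $\Phi_n$ is injective for $n$ large. The displayed space is finite-dimensional, being a cokernel of a Fredholm operator (\cref{fredd}), and the inclusions $M_n\hookrightarrow M_{n+1}$ induce restriction maps with $\Phi_n=(\mathrm{restr})\circ\Phi_{n+1}$, so $\Ker\Phi_{n+1}\subseteq\Ker\Phi_n$; a decreasing chain of subspaces of a finite-dimensional space stabilizes, so it is enough to show that $[\tilde{b}|_{M_n}]=0$ for every $n$ forces $b=0$. Writing $\tilde{b}|_{M_n}=d\gamma_n$, Stokes and self-duality give $\|\tilde{b}\|^2_{L^2(M_n)}=\int_{M_n}\tilde{b}\wedge\tilde{b}=\int_{\partial M_n}\gamma_n\wedge\tilde{b}$. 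On each boundary component $N\cong Y$ the pulled-back form $\tilde{b}|_N$ is exact (as $H^2(Y;\R)=0$) with primitive $\gamma_n|_N$; replacing $\gamma_n|_N$ by the coclosed primitive alters $\int_N\gamma_n\wedge\tilde{b}$ only by the integral over the closed $3$-manifold $N$ of an exact $3$-form, hence not at all, and the coclosed primitive is bounded in $L^2(N)$ by $C_Y\|\tilde{b}|_N\|_{L^2(N)}$ with $C_Y$ independent of $n$ (all boundary components being isometric to $Y$). Therefore $\|\tilde{b}\|^2_{L^2(M_n)}\leq C_Y\sum_N\|\tilde{b}|_N\|^2_{L^2(N)}$, and the right-hand side tends to $0$ as $n\to\infty$ by exponential decay of $\tilde{b}$. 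Letting $n\to\infty$ gives $\|\tilde{b}\|_{L^2(\hat{W})}=0$, so $\tilde{b}=0$ and $b=0$.

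The step I expect to be the real work is the analytic input on the periodic PSC end: that an $L^2$ harmonic self-dual $2$-form on $\hat{W}[-\infty,0]$ genuinely decays exponentially into that end, and, correspondingly, that the weighted $L^2$ de Rham cohomology used in the first part computes $\operatorname{Im}(H^*_c\to H^*)$ — both of which I would extract from \cref{fred} and \cref{allops} in the spirit of Taubes's periodic-end index theory. Granted that input, the Mayer--Vietoris homology computation and the Stokes estimate with coclosed primitives on the $Y$-slices are routine.
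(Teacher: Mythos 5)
You take a genuinely different route from the paper, and parts of it are appealing, but both halves defer the hard analytic input to claimed identifications that the paper instead establishes by hand.

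For the first assertion, the paper follows Taubes: it builds, using bump functions, an explicit injection $\phi$ from the space in question into $H^1\bigl(W[-2,0]\cup[0,2]\times Y,\partial;\R\bigr)$ and then rules out a nonzero image by showing that the would-be preimage of the generator $[dg_0]$ would have to have a primitive decaying at both ends, which is impossible because $g_0$ takes different boundary values. You instead compute $H_*(\hat{W};\Q)\cong H_*(S^3;\Q)$, pass to $H^1_c(\hat{W};\R)\cong H_3(\hat{W};\R)$ by Poincar\'e duality, and conclude from $H^1(\hat{W};\R)=0$. This is cleaner-looking, but the step you treat as a black box — that the $(\delta,\delta)$-weighted $L^2$ group equals $\operatorname{Im}\!\bigl(H^1_c(\hat{W};\R)\to H^1(\hat{W};\R)\bigr)$ — is not a general fact. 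On a cylindrical end, $\delta=0$ is a transition eigenvalue for $1$-forms (because $H^0(Y)\neq 0$), so the $\delta=0$ reduced $L^2$ picture of Atiyah--Patodi--Singer and the $\delta>0$ decay-weighted picture genuinely differ, and on the periodic end the analogous identification is exactly Taubes's Fredholm theory, not a soft topological statement. Concretely: a form like $dg_0$ (with $g_0$ constant with distinct values on the two ends) is compactly supported and closed, but not exact in $L^2_{(\delta,\delta)}$, so the naive weighted de Rham $H^1$ is not obviously zero even though the operator kernel is; your identification elides exactly the Hodge-theoretic step that makes this work out. The paper sidesteps this by showing directly that $[dg_0]\notin\operatorname{Im}\phi$.

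For the second assertion, your construction of $\Phi_n$ as the restriction $[\tilde b|_{M_n}]$ (using $H^2(M_n,\partial M_n)\cong H^2(M_n)$ since $\partial M_n$ is a union of rational homology spheres) and the Stokes estimate
\[
\|\tilde b\|^2_{L^2(M_n)}=\int_{\partial M_n}\gamma_n\wedge\tilde b
\]
with coclosed primitives on the $Y$-slices is a nice, self-contained proof of injectivity, genuinely different from the paper, which builds $\phi_n$ with a cutoff and cites Taubes's Lemma 5.4. But both your Stokes argument and your degree-$1$ argument rest on the unproved exponential decay of an $L^2$ harmonic (self-dual) form into the periodic end, which you flag as ``the real work'' — correctly, since that decay is the substance of Taubes's periodic-end Fredholm theory (via \cref{fred} and \cref{allops}) rather than a general fact for non-compact ends. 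So: a real alternative route, with a clean injectivity argument, but the two analytic identifications you invoke are precisely where the paper does its work, and as written they are claimed rather than proved — and in the degree-$1$ case, the identification as stated is not quite the right one.
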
 
\begin{proof}
This is essentially the same argument given in \cite[Proof of Proposition 5.1]{T87}. 
To prove the first assertion, we construct an injective homomorphism
\begin{align*}
&\{ a \in L^2_{k, (\delta, 0) }(i \Lambda_{\hat{W}[-\infty, 0] }^1) \  | \ d^{*_{(\delta,0)}} a =0,   da =0 \}\\
 &\to H^1(W[-2, 0] \cup [0,2]\times Y ,  \partial  (W[-2, 0] \cup [0,2]\times Y ) ; \R),
\end{align*}
and show that this map factors through $\{0\}$.
The proof uses the condition $H^1(W_n; \R)=0$.
This map is defined by choosing a bump function $\beta : \hat{W}[-\infty, 0]  \to \R$ such that
\begin{itemize}
\item $\beta |_{W[-1, 0] \cup [0,1]\times Y } =0$ and 
\item  $\beta |_{ \hat{W}[-\infty, 0] \setminus \operatorname{int} W[-2, 0] \cup [0,2]\times Y } =1$. 
\end{itemize}
For a given $w \in \Ker d \subset L^2_{k, (\delta, \delta) }(i \Lambda_{\hat{W}[-\infty, 0] }^1) $, one can assume $w$ is smooth. 
Since $H^1(W[-\infty, -1]; \R)=0$ and $H^1(Y; \R)=0$, one can choose smooth functions $f_-$ on $W[-\infty, -1]$ and $f_+$ on $[1, \infty) \times Y$ such that 
\[
w|_{W[-\infty, -1]}  = d f_- \text{ and } w|_{[1, \infty) \times Y} = d f_+ . 
\]
Since $w|_{W[-\infty, -1]}  = d f_-  \in L^2_{k, \delta}$ and $w|_{[1, \infty) \times Y} = d f_+ \in L^2_{k, 0}$ , up to adding constants, one can assume 
\[
f_- \in L^2_{k+1, \delta}(W[-\infty, -1]) \text{ and } f_+  \in L^2_{k+1, \delta}([1, \infty) \times Y).  
\]
Define 
\[
\phi ([w]) :=   w- d (\beta f_+ + \beta f_-). 
\]
One can see that $\phi$ induces a homomorphism 
\[
\phi : ( \Ker d \subset L^2_{k, (\delta, \delta) }(i \Lambda_{\hat{W}[-\infty, 0] }^1))  / d ( L^2_{k, (\delta, \delta) }(i \Lambda_{\hat{W}[-\infty, 0] }^0) ) \to
\]
\[
 H^1(W[-2, 0] \cup [0,2]\times Y ,  \partial W[-2, 0] \cup [0,2]\times Y ; \R).  
\]
The same argument given in \cite[(5.8)-(5.10)]{T87} shows that $\phi$ is an injection. 
Under our assumption, the cohomology
\[
H^1(W[-2, 0] \cup [0,2]\times Y ,  \partial W[-2, 0] \cup [0,2]\times Y ; \R)
\] is generated by 
\[
 [d  g_0]  \in H^1(W[-2, 0] \cup [0,2]\times Y ,  \partial W[-2, 0] \cup [0,2]\times Y ; \R)
 \]
 which is constant near $ \partial (W[-2, 0] \cup [0,2]\times Y) $ and $g_0|_{Y^-_{-3}} \neq g_0 |_{\{2\} \times Y}$, where $\partial W_i = Y^-_i \cup Y^+_i$ and $-Y=Y^-_i$ as oriented manifolds.
 
Next we show that $\im\phi=\{0\}$.
 Suppose that $[dg_0] = \phi ([w])$. Then we have 
 \[
 \phi ([w]) =   w- d f= dg_0  +d g' 
 \]
 for some $g' \in \Om^0 (W[-2, 0] \cup [0,2]\times Y ,  \partial W[-2, 0] \cup [0,2]\times Y)$ and for some $f \in L^2_{k+1, (\delta, \delta) }(\hat{W}[-\infty, 0])$. Up to image $d$, we can assume $d^{*_{(\delta, \delta) } } w=0$. Thus we have 
 \[
- d^{*_{(\delta, \delta) } } df = d^{*_{(\delta, \delta) } } dg_0  + d^{*_{(\delta, \delta) } }d g' ,
 \]
 here we consider $g_0$ and $g'$ as constant extensions on the ends.
This implies 
\[
-f= g_0+  g'+ \text{constant}.
\]
Since $f$ goes to $0$ on the ends, this gives a contradiction. 

Combining this with the injectivity of $\phi$, 
we have that the domain of $\phi$ is $\{0\}$.
Here \cref{identify} implies that the domain of $\phi$ is isomorphic to the left-hand side of \eqref{eq: ker dast d}, and now the first assertion of the \lcnamecref{lem: ker coker key} follows.

On the second assertion, a homomorphism 
\begin{align*}
\phi_n : &\{ (0,  b ) \in  L^2_{k-1, (\delta, 0)  }(i\Lambda_{\hat{W}[-\infty, 0] }^0   \oplus \Lambda_{\hat{W}[-\infty, 0] }^+  )\  |\    d^{*_{(\delta,0)}} b=0   \} \\ 
&\to  H^2 (W[-n, 0]  \cup [0,n]\times Y , \partial (W[-n, 0]  \cup [0,n]\times Y)  ; \R ) 
\end{align*}  
is given as follows.
First note that the domain of $\phi_n$ is identified with the corresponding functional space for the weight $(\delta, \delta)$ because of \cref{lem:deldel del zero}.
Take a sequence of bump functions $\beta_n : \hat{W}[-\infty, 0]  \to \R$ satisfying 
\[
\beta_n|_{W[-n+1, 0] \cup [0, n-1] \times Y} = 1\ ,  \beta_n|_{W[ -\infty , -n] \cup [n, \infty ] \times Y}=0, \text{ and }|d\beta_n|_{C^0}<C.
\]
For a given $w \in \{ b \in  L^2_{k-1, (\delta, \delta)  }(i \Lambda_{\hat{W}[-\infty, 0] }^+  )\  |\    d^{*_{(\delta,\delta)}} b=0   \} $,  one can see
\[
d e^{\hat{\tau}\delta } w=0. 
\]
Since $H^2(W[-n-2, -n+2]; \R) =0$ and $H^2([n-2, n+2] \times Y; \R)=0$, one can choose 
$\gamma_-$ and $\gamma_+$ such that 
\[
e^{\hat{\tau}\delta } w|_{W[-n-2, -n+2]} = d \gamma^n_- \text{ and } e^{\hat{\tau}\delta } w|_{[n-2, n+2] \times Y} = d \gamma^n_+.  
\]
Define 
\[
\phi_n (w) := \begin{cases} 
e^{\hat{\tau}\delta } w \text{ on } W[-n-2, 0] \cup [0, n+1] \times Y \\
d (\beta_n\gamma^n_- +  \beta_n\gamma^n_+ ) \text{ on } W[-n-2, -n+2] \cup [n-2, n+2] \times Y \\
0 \text{ otherwise } . 
\end{cases}
\]
The proof of the injectivity of $\phi_n$ is the same as the proof of \cite[Lemma 5.4]{T87}.
\end{proof}

\begin{proof}[Proof of \cref{cohomology}]
Note that \cref{prop:compari op} gives isomorphisms of the kernels and cokernels between the operators \eqref{AHS'} and \eqref{eq: AHS''}, since the operator dealt with in \cref{prop:compari op} is the direct sum of a real operator and a complex operator.
Using this and \cref{cohomology2}, to show the \lcnamecref{cohomology}, it suffices to see that
\begin{align}
\label{eq: vanish1}
\Set{ a \in L^2_{k, (\delta, 0) }(i \Lambda_{\hat{W}[-\infty, 0] }^1)   |  d^{*_{(\delta,0)}} a =0,   d^+a =0  } = \{0\} 
\end{align}
and 
\begin{align}
\label{eq: vanish2}
\Set{ (0,  b ) \in  L^2_{k-1, (\delta, 0)  }(i\Lambda_{\hat{W}[-\infty, 0] }^0   \oplus \Lambda_{\hat{W}[-\infty, 0] }^+  )|    d^{*_{(\delta,0)}} b=0   } =\{0\}  . 
\end{align}
By integration by parts, one has 
\begin{align*}
&\Set{ a \in L^2_{k, (\delta, 0) }(i \Lambda_{\hat{W}[-\infty, 0] }^1)   |  d^{*_{(\delta,0)}} a =0,   d^+a =0 }\\
=& \Set{ a \in L^2_{k, (\delta, 0) }(i \Lambda_{\hat{W}[-\infty, 0] }^1)   |  d^{*_{(\delta,0)}} a =0,   da =0 }. 
\end{align*}
The vanishing \eqref{eq: vanish1} follows from this and the first assertion of \cref{lem: ker coker key}.

Our assumption implies $H^2 (W[-n, 0], \partial W[-n, 0] )=0$, and the vanishing \eqref{eq: vanish2} follows from the second assertion of \cref{lem: ker coker key}.
\end{proof}

In the proof of \cref{real main}, we also consider the `right-periodic' manifold $W[0,\infty]$.
Fix a Riemann metric $g_{W[0,\infty] }$ on $W[0,\infty]$ such that 
\begin{itemize} 
\item $g_{W[0,\infty] } |_{W[1,\infty]}$ is periodic and PSC, and 
\item $ g_{W[0,\infty] }$ is product metric near $\partial  W[0,\infty] =- Y$. 
\end{itemize}
Let us consider the following operators: 
\begin{itemize}
    \item the Atiyah--Hitchin--Singer operator with APS-boundary condition: 
\begin{align}\label{-AHS'}
\begin{split}
d^+  +  {p}^0_{-\infty} \circ  {r} :  L^2_{k, \delta}(i \Lambda_{W[0,\infty]  }^1)_{CC}   
 \to  L^2_{k-1,  \delta}(i \Lambda_{W[0,\infty]  }^+  )\oplus {V}^0_{-\infty}  (-Y; \R), 
 \end{split}
 \end{align}
    \item the linearlization of the Seiberg--Witten equation
\begin{align}\label{-}
    L_{W[0,\infty] }\oplus (p^0_{-\infty}\circ r) : \mathcal{U}_{k, \delta}  \to L^2_{k-1, \delta}(i\Lambda^+\oplus S^-)\oplus V^0_{-\infty}(-Y). 
\end{align}
\end{itemize}

\begin{theo}\label{cohomology1}
There exists $\delta'_1 >0$ such that for any $\delta \in (0, \delta'_1)$, the followings are true: 
\begin{itemize}
    \item[(i)] the operator \eqref{-} is Fredholm, and 
    \item[(ii)] the operator \eqref{-AHS'} is isomorphism. 
\end{itemize}
\end{theo}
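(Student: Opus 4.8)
The plan is to repeat, with the two ends interchanged, the chain of arguments used to prove \cref{cohomology}. The manifold $W[0,\infty]$ is periodic on the end $i\to+\infty$ and carries a product collar near its boundary $-Y$, so each of \cref{fredd}, \cref{prop:compari op}, \cref{cohomology2}, \cref{identify}, \cref{lem:deldel del zero}, and \cref{lem: ker coker key} has a direct counterpart over $W[0,\infty]$ obtained by relabeling. Before starting I would isolate the two homological inputs that the argument consumes: since $W_0$ is a rational homology cobordism from $Y$ to itself, $H^1(W_n;\R)=0$ for all $n$ and $H^2(W[0,n],\partial(W[0,n]);\R)=0$ for all $n$. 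These are exactly the facts used, with $W[-n,0]$ replaced by $W[0,n]$ and $Y$ by $-Y$, in the counterparts of \cref{lem: ker coker key} and of the final step of the proof of \cref{cohomology}.

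For (i), I would first note that the statements over $\wt X$ in \cref{allops} are symmetric in the two ends: a PSC metric is admissible, so $D^+_{A_0}$ on $L^2_{k,\delta}(\wt X;S^\pm)$ is invertible for every $\delta$, and the Atiyah--Hitchin--Singer complex on $\wt X$ is acyclic for $\delta\in\R\setminus\mathcal D$. Patching the resulting inverse over $\wt X$ with a local APS parametrix near $-Y$, exactly as in the proof of \cref{fredd}, shows that the Dirac operator with APS boundary condition on $W[0,\infty]$ is Fredholm for all $\delta$ and that the hatted Atiyah--Hitchin--Singer operator on $W[0,\infty]$ is Fredholm for $\delta\in(0,\delta_0)$, with $\delta_0$ as in \cref{rem deltazero}. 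Then the commutative-diagram and snake-lemma argument of \cref{prop:compari op}, applied over $W[0,\infty]$ with $-Y$ in place of $Y$, identifies the kernel and cokernel of $L_{W[0,\infty]}\oplus(p^0_{-\infty}\circ r)$ with those of $\widehat{L}_{W[0,\infty]}\oplus(\widehat{p}^0_{-\infty}\circ\widehat{r})$, the direct sum of the two Fredholm operators above; this proves (i) for all $\delta\in(0,\delta_0)$.

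For (ii), since the operator appearing in the counterpart of \cref{prop:compari op} splits as the direct sum of the real Atiyah--Hitchin--Singer part and the complex Dirac part, it suffices to show \eqref{-AHS'} is an isomorphism, equivalently that the $W[0,\infty]$-analogue of \eqref{eq: AHS''} is invertible. Here I would reprove the lemmas \cref{cohomology2}, \cref{identify}, \cref{lem:deldel del zero}, \cref{lem: ker coker key} on $\hat{W}[0,\infty]:=W[0,\infty]\cup([0,\infty)\times(-Y))$: by the argument of \cite{APSI} the spectral boundary value problem is identified with weighted $L^2$ forms on this cylindrical-end completion; the weight on the cylindrical factor is moved from $0$ to $\delta$ by the exponential-decay substitution of \cref{identify} and \cref{lem:deldel del zero}; and finally the counterpart of \cref{lem: ker coker key} yields an injection of the kernel into $H^1(W[0,2]\cup[0,2]\times(-Y),\partial;\R)$ and of the cokernel into $H^2(W[0,n]\cup[0,n]\times(-Y),\partial;\R)$ for $n$ large, whose sources vanish by the homological inputs recorded above. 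Taking $\delta'_1$ to be the minimum of $\delta_0$ and the finitely many smallness thresholds produced along the way then gives the \lcnamecref{cohomology1}.

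The step I expect to be the main obstacle is not conceptual but bookkeeping: one must choose the weight function on $W[0,\infty]$ so that it decays toward the periodic end $i\to+\infty$ (i.e. use a function with $T^*\tau=\tau-1$, equivalently $-\tau$ in the earlier normalization) so that Taubes' Fredholm and exponential-decay estimates \cite{T87} apply in the correct direction, and one must keep track that the APS projection is now onto the appropriate eigenspaces of the tangential operator for $-Y$ rather than $Y$. Modulo these relabelings, no analytic input beyond what was already established for $W[-\infty,0]$ is needed.
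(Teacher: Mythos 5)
Your proposal is correct and matches the paper's approach exactly: the paper's proof of \cref{cohomology1} simply states that (i) follows as in \cref{fredd} and (ii) as in \cref{cohomology}, and you have spelled out precisely the relabelings (ends, boundary orientation, weight-function direction) that make this work. The remark about flipping the sign of $\tau$ so the weight still grows toward the periodic end is the one genuine bookkeeping point, and you have handled it correctly.
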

\begin{proof}
The proof of (i) is the same as that of \cref{fredd}. 
The proof of (ii) is also essentially the same as that of \cref{cohomology}. 
\end{proof}

\subsection{Global slice theorem}
In this subsection we prove the global slice theorem in our situation. We follow the method given in \cite{IT20}. In \cite{IT20}, for 4-manifolds with conical end, a global slice theorem is given and the essentially same method can be applied to our situation. 

The following proposition is a key lemma to prove the global slice theorem: 
\begin{prop}\label{Slice}
There exists a small positive number $ \delta_2$ such that for any positive real number $0< \delta \leq  \delta_2$, 
\begin{align}
L^2_{k,  \delta} (i \Lambda^1_{W[-\infty, 0]} ) = L^2_{k,  \delta} (i\Lambda^1_{W[-\infty, 0]} )_{CC} \oplus d L^2_{k+1,  \delta} (i\Lambda^0_{W[-\infty, 0]} ) . 
\end{align}
\end{prop}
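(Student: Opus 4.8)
The plan is to establish the direct sum decomposition by exhibiting, for each $a \in L^2_{k,\delta}(i\Lambda^1_{W[-\infty,0]})$, a unique gauge-fixing function $f \in L^2_{k+1,\delta}(i\Lambda^0_{W[-\infty,0]})$ such that $a - df$ lies in the double Coulomb slice $L^2_{k,\delta}(i\Lambda^1_{W[-\infty,0]})_{CC}$, i.e.\ satisfies $d^{*_\delta}(a-df)=0$ together with $d^{*}\mathbf{t}(a-df)=0$ on the boundary $Y$. The content of the statement is that the natural map
\[
(f,\beta) \longmapsto df + \beta, \qquad f \in L^2_{k+1,\delta}(i\Lambda^0), \ \beta \in L^2_{k,\delta}(i\Lambda^1)_{CC},
\]
is an isomorphism, and since $d^{*_\delta} df = 0$ together with $d^{*}\mathbf{t}df = 0$ forces $f$ constant (hence $df=0$ after imposing the weighted decay) the map is injective; surjectivity is the real point. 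Following the method of \cite{IT20}, I would reduce surjectivity to the solvability of a mixed boundary value problem: given $a$, solve the Neumann-type equation $d^{*_\delta}df = d^{*_\delta}a$ on $W[-\infty,0]$ with the boundary constraint $d^{*}\mathbf{t}(df) = d^{*}\mathbf{t}a$ on $Y$ in the weighted space $L^2_{k+1,\delta}$.

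First I would set up the relevant Laplacian-type operator. The natural operator to invert is
\[
\Delta_\delta := d^{*_\delta} d : L^2_{k+1,\delta}(i\Lambda^0_{W[-\infty,0]}) \longrightarrow L^2_{k-1,\delta}(i\Lambda^0_{W[-\infty,0]}),
\]
with APS-type (here essentially Neumann) boundary conditions on $Y$ encoding the $d^{*}\mathbf{t}$-constraint; concretely one works on the cylindrical-end completion $\hat{W}[-\infty,0]$ as in \cref{cohomology2}, where the spectral boundary condition translates into an honest weighted $L^2$ problem on a manifold with periodic end on one side and cylindrical end on the other. The key input is \cref{allops}, which gives that the periodic de Rham complex on $\wt X$ is acyclic for small $\delta>0$ (since a PSC metric is admissible); together with the cylindrical-end analysis this yields that $\Delta_\delta$ (with the boundary condition) is Fredholm of index zero, and that its kernel is trivial — an element of the kernel would be a weighted-$L^2$ harmonic function, hence constant by the decay at the periodic end, hence zero. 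So $\Delta_\delta$ is an isomorphism for $\delta \in (0,\delta_2)$ for suitable $\delta_2 \le \delta_1$, and I can solve for $f$.

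Then I would verify the splitting is genuinely a direct sum: given $a$, put $f := \Delta_\delta^{-1}(\text{data from } a)$ and $\beta := a - df$; one checks $\beta \in L^2_{k,\delta}(i\Lambda^1)_{CC}$ directly from the construction, $df \in dL^2_{k+1,\delta}(i\Lambda^0)$ tautologically, and the intersection $L^2_{k,\delta}(i\Lambda^1)_{CC} \cap dL^2_{k+1,\delta}(i\Lambda^0) = \{0\}$ because $df \in (i\Lambda^1)_{CC}$ means $d^{*_\delta}df=0$ and $d^{*}\mathbf{t}df = 0$, forcing $f$ to be a weighted-$L^2$ harmonic function, hence zero as above. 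The continuity of the projections follows from the open mapping theorem once the algebraic decomposition and the boundedness of $\Delta_\delta^{-1}$ are in hand.

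The main obstacle I expect is the careful treatment of the boundary condition: the constraint defining the double Coulomb slice is the pair $(d^{*_\delta}a = 0,\ d^{*}\mathbf{t}a = 0)$, and getting the elliptic boundary value problem for $\Delta_\delta$ to be Fredholm of index zero with trivial kernel requires matching Khandhawit's double Coulomb analysis \cite{Kha15} on the compact/cylindrical side with Taubes' periodic Fredholm theory \cite{T87} on the periodic end, precisely as in the patching arguments of \cref{fredd} and \cref{prop:compari op}. In particular one must be slightly careful that the weight $\delta$ is chosen small enough to simultaneously make the periodic de Rham complex acyclic \emph{and} keep the cylindrical-end operator invertible; this is why the statement only claims existence of some $\delta_2>0$, and I would take $\delta_2 = \min\{\delta_0,\delta_1\}$ shrunk further if needed. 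The remaining verifications — elliptic regularity to land in $L^2_{k+1,\delta}$, the integration-by-parts identities relating $d^+$ and $d$ already used in the proof of \cref{cohomology}, and the vanishing of weighted harmonic $0$-forms — are routine given the machinery already developed.
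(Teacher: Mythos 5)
Your overall route matches the paper's: reduce the splitting to solvability of a gauge-fixing equation $\Delta_\delta \xi = d^{*_\delta}\alpha$ for the weighted Laplacian on $W[-\infty,0]$, plus a boundary constraint coming from the $d^{*}\mathbf{t}$-condition, and then establish that the resulting elliptic boundary value problem is invertible; this is exactly the [IT20]-style argument the paper invokes, and your injectivity/intersection reasoning (weighted-$L^{2}$ decay at the periodic end kills harmonic $0$-forms) is in the right spirit. However there are two concrete issues.

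First, the boundary condition is misidentified. You call it ``APS-type (here essentially Neumann)''. In the double Coulomb slice the constraint is $d^{*}\mathbf{t}(\alpha - d\xi)=0$. Since $\xi$ is a $0$-form, $\mathbf{t}(d\xi) = d_{Y}(\xi|_{Y})$, so the constraint becomes $\Delta_{Y}(\xi|_{Y}) = d^{*}\mathbf{t}\alpha$, which \emph{prescribes the boundary trace} $\mathbf{t}\xi = G_{Y}\, d^{*}\mathbf{t}\alpha$. This is a Dirichlet condition, not Neumann, and certainly not a spectral (APS) one. Correspondingly, the paper does not invert $\Delta_\delta : L^{2}_{k+1,\delta} \to L^{2}_{k-1,\delta}$ with an implicit boundary condition: it proves surjectivity of the full Dirichlet map $\xi \mapsto (\Delta_\delta \xi, \mathbf{t}\xi)$ into $L^{2}_{k-1,\delta}(i\Lambda^{0}) \oplus L^{2}_{k+\frac12}(i\Lambda^{0}_{Y})$. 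Your version of the operator (with Neumann/APS) would produce an $f$ that need not satisfy $d^{*}\mathbf{t}(\alpha - df)=0$, so $\alpha - df$ would fail to land in the slice.

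Second, the assertion that ``$\Delta_\delta$ (with the boundary condition) is Fredholm of index zero'' is doing a lot of unjustified work. On a compact manifold with boundary the Dirichlet Laplacian has index zero, but here there is a periodic end, and the codomain carries the extra boundary-trace factor, so the index is not automatic. The paper handles this explicitly by the excision principle, patching a parametrix on the periodic end (where the acyclicity from \cref{allops} gives invertibility) with the compact/boundary parametrix, and then computing the resulting index à la [IT20]. Your sketch needs that excision/index step spelled out; the combination ``\cref{allops} $+$ cylindrical-end analysis $\Rightarrow$ index zero'' is not a proof.

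With the boundary condition corrected to Dirichlet and the excision/index computation supplied, your argument would recover the paper's proof.
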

This proposition corresponds to \cite[Proposition 3.5]{IT20}. 
\begin{proof}
The proof is essentially same as the proof of  \cite[Proposition 3.5]{IT20}.
We first prove
\begin{align} \label{intersection}
L^2_{k, \delta} (i\Lambda^1_{W[-\infty, 0] } )_{CC}\cap  d L^2_{k+1, \delta} (i\Lambda^0_{W[-\infty, 0] } )  = \{0\}. 
\end{align}
 However, the proof of \eqref{intersection} is the same as the proof of (21) in \cite[Proposition 3.5]{IT20}, and we omit this. 
 
Next, we will see 
\[
L^2_{k, \delta} ( i\Lambda^1_{W[-\infty, 0] } ) = L^2_{k, \delta} (i\Lambda^1_{W[-\infty, 0] } )_{CC} +  d L^2_{k+1, \delta} (i\Lambda^0_{W[-\infty, 0] } ) . 
\]
 We need to prove that, for any $\al \in L^2_{k, \delta} (i \Lambda^1_{W[-\infty, 0] } )$, there exists $\xi \in L^2_{k+1, \delta} (i\Lambda^0_{W[-\infty, 0] } )$ such that $\al- d \xi  \in L^2_{k, \delta} (i\Lambda^1_{W[-\infty, 0] } )_{CC}$, i.e. 
 \begin{align*} 
 d^{*_\delta} d \xi  =  d^{*_\delta}\alpha \\
 d^{*} {\bf t} d \xi  =  d^{*} {\bf t} \alpha 
 \end{align*}
 hold. These equations are equivalent to 
  \begin{align*} 
 \Delta_\delta \xi  =  d^{*_\delta}\alpha \\
 {\bf t}  \xi  =  G_{Y } d^{*} {\bf t} \alpha , 
 \end{align*}
where $G_{Y }$ is the Green operator on $Y $. 
Therefore we need to prove surjectivity of the map 
\[
\Delta_\delta(W[-\infty, 0] , \partial ) : L^2_{k+1, \delta} (i\Lambda^0_{W[-\infty, 0] } )  \to L^2_{k-1, \delta} (i\Lambda^0_{W[-\infty, 0] } ) \oplus  L^2_{k+ \frac{1}{2}} (i \Lambda^0_{Y } ),  
\]
defined by 
\[
\Delta_\delta (W[-\infty, 0] , \partial ) \xi = ( \Delta_\delta \xi , {\bf t}\xi ). 
\]
In order to prove this, we use the excision principle and reduce the surjectivity of $ \Delta_\al(W[-\infty, 0] , \partial )$ to calculations of indexes for several Laplacian operators. 
The calculation of indicies of Laplacian operators are also given in  \cite[Proposition 3.5, page 18]{IT20}. We can confirm the surjectivity of $\Delta_\delta(W[-\infty, 0] , \partial ) $ and obtain the conclusion. 
\end{proof}

\cref{Slice} implies the following global slice theorem: 
\begin{lem}\label{global}Let $\delta_2$ be the constant given in \cref{Slice}. Then, for $\delta \in (0, \delta_2)$, there is a $\G_{k+1, \delta}(W[-\infty, 0] )$-equivariant diffeomorphism 
\[
\mathcal{U}_{k, \delta}(W[-\infty, 0] ) \cong L^2_{k,  \delta} (i\Lambda^1_{W[-\infty, 0]} )_{CC} \times \G_{k+1, \delta}(W[-\infty, 0] ) . 
\]
\end{lem}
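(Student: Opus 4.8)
The plan is to deduce \cref{global} from \cref{Slice} by the standard slice-theorem argument, adapted to the weighted setting. First I would recall that, since $\delta \in (0,\delta_2)$, \cref{Slice} gives the $L^2_{k,\delta}$-orthogonal-type splitting
\[
L^2_{k,\delta}(i\Lambda^1_{W[-\infty,0]}) = L^2_{k,\delta}(i\Lambda^1_{W[-\infty,0]})_{CC} \oplus dL^2_{k+1,\delta}(i\Lambda^0_{W[-\infty,0]}),
\]
and note that the exponential map $\xi \mapsto e^{\xi}$ identifies the group $\G_{k+1,\delta}(W[-\infty,0])$ with the additive group $L^2_{k+1,\delta}(i\Lambda^0_{W[-\infty,0]})$ modulo the constant gauge transformations; since a constant $u$ with $|u|=1$ and $1-u \in L^2_{k+1,\delta}(\underline{\C})$ forces $u=1$ (as constants are not in $L^2_{k+1,\delta}$ on the non-compact $W[-\infty,0]$), in fact $\G_{k+1,\delta}(W[-\infty,0])$ is connected and the logarithm $u \mapsto -u^{-1}du = d(\log u)$ is a well-defined bijection onto $dL^2_{k+1,\delta}(i\Lambda^0_{W[-\infty,0]})$. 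Here I would invoke the Sobolev multiplication $L^2_{k+1,\delta} \times L^2_{k+1,\delta} \to L^2_{k+1,\delta}$ for $k \geq 4$ to see that $\G_{k+1,\delta}(W[-\infty,0])$ is indeed a Banach Lie group acting smoothly on $\mathcal{C}_{k,\delta}(W[-\infty,0])$.

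Next I would define the candidate diffeomorphism. Starting from the free action
\[
\G_{k+1,\delta}(W[-\infty,0]) \times \mathcal{U}_{k,\delta}(W[-\infty,0]) \longrightarrow \mathcal{C}_{k,\delta}(W[-\infty,0]),
\qquad (u,(a,\phi)) \longmapsto (a - u^{-1}du, u\phi),
\]
the point is to show this is a diffeomorphism onto $\mathcal{C}_{k,\delta}$, restricted to the spinor-free factor this is the statement we want. Given $(a,\phi) \in \mathcal{C}_{k,\delta}$, \cref{Slice} produces a unique decomposition $a = a_{CC} + d\xi$ with $a_{CC} \in L^2_{k,\delta}(i\Lambda^1)_{CC}$ and $\xi \in L^2_{k+1,\delta}(i\Lambda^0)$; setting $u = e^{-\xi}$ gives $u \cdot (a_{CC},u^{-1}\phi)$... more precisely, $u^{-1} \cdot (a,\phi) = (a_{CC}, u\phi) \in \mathcal{U}_{k,\delta}$ lies in the slice and is unique with this property because the stabilizer is trivial and the $CC$-component of $a$ is uniquely determined. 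Smoothness of $(a,\phi) \mapsto (\xi, a_{CC}, u\phi)$ follows because $\xi = \Delta_\delta(W[-\infty,0],\partial)^{-1}(d^{*_\delta}a, G_Y d^*\mathbf{t}a)$ depends linearly and boundedly — hence smoothly — on $a$ (this is exactly the bounded invertibility established inside the proof of \cref{Slice}), and then $u = e^{-\xi}$ and $u\phi$ are smooth by the Banach algebra / Sobolev multiplication properties. This gives the inverse map, so the action map is a diffeomorphism, which is the assertion of the \lcnamecref{global}.

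The main obstacle I anticipate is bookkeeping the interplay of the two boundary/end conditions defining $\mathcal{U}_{k,\delta}(W[-\infty,0])_{CC}$ — the weighted condition $d^{*_\delta}a = 0$ on the periodic end and the unweighted condition $d^*\mathbf{t}a = 0$ on the genuine boundary $Y$ — and making sure the operator $\Delta_\delta(W[-\infty,0],\partial)$ is genuinely invertible (not merely Fredholm) in the relevant weight range, so that the gauge-fixing $\xi$ exists, is unique, and depends continuously on $a$. This is precisely what \cref{Slice} is designed to supply, so the argument here is to quote \cref{Slice} for invertibility, quote the Sobolev multiplication theorem ($k \geq 4$) for the smoothness of $u \mapsto u^{-1}du$ and $u \mapsto u\phi$, and observe that triviality of the stabilizer (which follows since $\G_{k+1,\delta}$ consists of based gauge transformations in the weighted topology) upgrades the bijection to a $\G_{k+1,\delta}(W[-\infty,0])$-equivariant diffeomorphism. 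One should also check that the exponential-weight function $\tau$ does not interfere with positivity/ellipticity of $\Delta_\delta$ for small $\delta$, but this is already folded into the choice of $\delta_2$ in \cref{Slice}.
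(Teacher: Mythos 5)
Your proposal is correct and takes essentially the same approach as the paper, which merely refers to the closed-manifold case (``the proof is the essentially same as in the case of closed 4-manifolds''); you have spelled out that standard slice argument in the weighted setting, with the key inputs correctly identified, namely the decomposition and bounded inverse of $\Delta_{\delta}(W[-\infty,0],\partial)$ from \cref{Slice}, Sobolev multiplication for $k\geq 4$, and triviality of the stabilizer because nonzero constants are excluded from $L^2_{k+1,\delta}$ when $\delta>0$. One cosmetic fix: with the action $u\cdot(a,\phi)=(a-u^{-1}du,\,u\phi)$ and the decomposition $a=a_{CC}+d\xi$ from \cref{Slice}, the gauge transformation bringing $(a,\phi)$ into the slice is $u=e^{\xi}$, so that $u^{-1}du=d\xi$ and $u\cdot(a,\phi)=(a_{CC},e^{\xi}\phi)\in\mathcal{U}_{k,\delta}$; your $u=e^{-\xi}$ has the wrong sign.
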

The proof is the essentially same as in the case of closed 4-manifolds.

\subsection{Dirac index on $W[-\infty,0]$} \label{section: Dirac index}
\label{Dirac index on Winf}

In this \lcnamecref{Dirac index on Winf}, we shall calculate the spin Dirac index $\ind_{\C}D^+_{W[-\infty,0]}$ on the half-periodic $4$-manifold $W[-\infty,0]$:


\begin{prop}
\label{prop: excision}
Assuming that a PSC metric is equipped with $X$, we have
\begin{align}
\label{eq: excision aim}
\ind_{\C}D^+_{W[-\infty,0]} = \lambda_{SW}(X, \fraks) + n(Y,\frakt,g), 
\end{align}
where $\ind_{\C}D^+_{W[-\infty,0]}$ means the index of the Dirac operator under the APS-boundary condition and $n(Y,\frakt,g)$ is given in \eqref{n}. 
\end{prop}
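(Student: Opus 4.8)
The plan is to reduce the identity \eqref{eq: excision aim} to the definition of $\lambda_{SW}(X,\fraks)$ from \cite{MRS11, LRS17}, combined with the excision (additivity) property of Fredholm indices, for which the required Fredholmness is supplied by \cref{fredd}. Recall that $\lambda_{SW}(X,\fraks) = \#\mathcal{M}(X,\fraks,g,\beta) - w(X,\fraks,g,\beta)$, where $\#\mathcal{M}$ is the signed count of irreducible Seiberg--Witten solutions on $X$ for a metric $g$ and a small generic perturbation $\beta$, $w$ is the end-periodic Dirac index correction term, and the right-hand side is independent of $(g,\beta)$. First I would take $g$ to be a metric on $X$ that restricts to a periodic PSC metric on the infinite cyclic cover, agreeing with $g_{W[-\infty,0]}$ on $W[-\infty,-1]$ and with the product metric on a collar of $Y$. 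By the Weitzenb\"ock--Lichnerowicz argument, $\mathcal{M}(X,\fraks,g,\beta)=\emptyset$ once $\beta$ is small, so $\lambda_{SW}(X,\fraks)=-w(X,\fraks,g,\beta)$, and it remains to show $\ind_{\C}D^+_{W[-\infty,0]} = n(Y,\frakt,g) - w(X,\fraks,g,\beta)$.

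For this, fix a compact spin$^c$ $4$-manifold $(W',\frakt')$ with $\partial(W',\frakt')=(Y,\frakt)$ and product metric near its boundary, and form the spin$^c$ $4$-manifold without boundary
\[
\bar N := (-W') \cup_{Y} W[-\infty,0],
\]
which has a single periodic end modeled on $\wt{X}$. By \cref{fredd}(i) the operator $D^+_{W[-\infty,0]}$ with APS boundary condition at $Y$ is Fredholm (for every weight $\delta$, since PSC metrics are admissible), and $D^+_{\bar N}$ is Fredholm by Taubes's end-periodic theory \cite{T87} because $D^+_{A_0}$ is invertible on the PSC end. Cutting both $W[-\infty,0]$ and $\bar N$ along a hypersurface $\cong Y$ deep inside the periodic tail and applying the excision principle together with the APS gluing formula, I would obtain a relation expressing $\ind_{\C}D^+_{W[-\infty,0]}$ in terms of $\ind_{\C}D^+_{\bar N}$ and $\ind_{\C}D^+_{-W'}$ under APS boundary conditions, with an explicit correction involving $\sigma(W')$ and $\dim_{\C}\Ker\D_{B_0}$ on $Y$ coming from the spectral-projection mismatch at the cut.

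Two index computations then complete the argument. On the compact side, the APS index theorem for $-W'$, together with $\sigma(-W')=-\sigma(W')$, the interchange of chiralities under orientation reversal, and the formula $n(Y,\frakt,g)=\ind_{\C}D^+_{W'}+\sigma(W')/8$ of \eqref{n}, controls the compact contribution; the choice of $\bar N$ is made so that the residual signature and kernel-dimension terms cancel against the correction produced by excision. On the non-compact side, by the definition of the correction term in \cite{MRS11, LRS17} --- which measures exactly the discrepancy between the end-periodic index of a closed-up periodic-end manifold and the APS prediction of its filling --- the end-periodic index $\ind_{\C}D^+_{\bar N}$ contributes $-w(X,\fraks,g,\beta)$ up to the same APS data; this is where the vanishing $\mathcal{M}(X,\fraks,g,\beta)=\emptyset$ is used, so that $\lambda_{SW}=-w$ and $\bar N$ is an admissible model for the correction term. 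Assembling these identifications yields \eqref{eq: excision aim}.

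The step I expect to be the main obstacle is the bookkeeping of orientations and of APS/spectral-projection conventions: matching the chirality conventions among $W[-\infty,0]$, its periodic tail, $W'$ and $-W'$; tracking the $\dim\Ker$ and signature corrections in the gluing formula and confirming that they cancel; and reconciling the sign conventions in the definitions of $\lambda_{SW}$ and of $w$ in \cite{MRS11, LRS17} with those in force here. The excision step itself requires no new analytic input beyond the Fredholm package of \cref{section: Fredholm theories} --- in particular \cref{fredd} and the analysis behind \cref{cohomology} --- together with Taubes's end-periodic index theory.
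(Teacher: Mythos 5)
Your overall strategy — cap off $W[-\infty,0]$ by a compact spin filling of $-Y$ to obtain a boundaryless periodic-end manifold, use an excision/gluing formula for the APS and end-periodic Dirac indices (supplied by the Fredholm package of \cref{fredd}), and then identify $\lambda_{SW}$ with $-w$ because PSC kills the irreducible count — is the same as the paper's, and the compact-side bookkeeping via \eqref{n} and APS is the right idea. However, there is a genuine gap precisely where you anticipate ``reconciling the sign conventions'' and dismiss it as bookkeeping.

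The natural closure $\bar N = W[-\infty,0]\cup_Y(-W')$ has its periodic end running in the direction \emph{opposite} to the convention of Mrowka--Ruberman--Saveliev and Lin--Ruberman--Saveliev, where the correction term $w(X,g,\beta)$ is computed from a compact filling of $Y$ with a \emph{right}-periodic end $Z\cup_Y W[0,\infty]$. Consequently, $\ind_{\C}D^+_{\bar N}+\sigma(-W')/8$ computes $w(-X,g,0)$, not $w(X,g,\beta)$, so your assertion that $\ind_{\C}D^+_{\bar N}$ ``contributes $-w(X,\fraks,g,\beta)$'' is not literally correct. To pass from $\lambda_{SW}(-X,\fraks)=-w(-X,g,0)$ to $\lambda_{SW}(X,\fraks)$, the paper invokes a substantive orientation-reversal identity, $-\lambda_{SW}(-X,\fraks)=\lambda_{SW}(X,\fraks)$, which is Lin's Lemma~2.21 and is valid \emph{because} $X$ carries a PSC metric — without PSC, $\lambda_{SW}(X)+\lambda_{SW}(-X)$ need not vanish. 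So the PSC hypothesis is used twice: once to make the irreducible count vanish (which you have), and once to furnish this orientation-reversal formula (which you do not). A secondary, minor inaccuracy: the clean excision identity (the paper's \cref{prop: excision2}) reads $\ind_{\C}D^+_{W[-\infty,0]}+\ind_{\C}D^+_{M}+\dim_{\C}\Ker\D=\ind_{\C}D^+_{M_{\infty}}$ with $M=-W'$ — the signature terms do \emph{not} arise from the spectral-projection mismatch at the cut but enter only through the APS index theorem/the definition of $n(Y,\frakt,g)$ on the compact side, so you should not expect a $\sigma(W')$ correction in the gluing step itself.
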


Before proving \cref{prop: excision}, we note a few lemmas:

\begin{lem}
\label{prop: excision1}
Let $M_{1}, M_{2}$ be compact spin $4$-manifolds with common boundary $Y$ with orientation $\del M_{1} = Y = - \del M_{2}$.
Equip $M_{1}, M_{2}$ with metrics so that the metrics are the product metric
\[
dt^2 + \pr^* g_Y
\]
near the boundary for a Riemann metric $g_Y$ on $Y$, where $t$ is a collar coordinate of the product neighborhood and $\pr$ means the projection from the collar neighborhoods of $Y$ to $Y$. 
Then we have 
\[
\ind_{\C}D^+_{M_{1}} + \ind_{\C}D^+_ {M_{2}} + \dim_{\C} \Ker\D 
= \ind_{\C}D^+_ {M_{1}\cup_{Y} M_{2}},
\]
where $\D$ is the $3$-dimensional Dirac operator on $Y$ and $\ind_{\C}D^+_ {M_i}$ denotes the index of the Dirac operator under the APS-boundary condition. 
\end{lem}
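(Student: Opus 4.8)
The statement is the standard additivity of the APS index for the spin Dirac operator under gluing along a hypersurface with a product-metric neighborhood, so the plan is to reduce it to the Atiyah--Patodi--Singer index theorem applied to $M_1$, $M_2$, and $M_1 \cup_Y M_2$, and to track how the boundary eta-invariant and the dimension of the kernel of the three-dimensional Dirac operator $\D$ contribute. First I would recall the APS index formula: for a compact spin $4$-manifold $M$ with boundary $Y$ and product metric near the boundary,
\[
\ind_{\C} D^+_M = \int_M \hat{A}(M) - \frac{h(Y) + \eta(Y)}{2},
\]
where $h(Y) = \dim_{\C}\Ker\D$ and $\eta(Y)$ is the eta-invariant of $\D$ (the sign conventions here follow \cite{APSI}, and one must be careful that the same metric $g_Y$ and the same orientation conventions are used throughout). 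Since $M_1 \cup_Y M_2$ carries the product metric on the collar of $Y$ on \emph{both} sides, the Atiyah--Singer integrand is local and additive: $\int_{M_1 \cup_Y M_2} \hat{A} = \int_{M_1}\hat{A} + \int_{M_2}\hat{A}$.

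The key sign-bookkeeping step is that $\partial M_2 = -Y$, so when we apply the APS formula to $M_2$ we get the eta-invariant and kernel of the Dirac operator on $-Y$; reversing orientation sends $\D$ to $-\D$, hence $\eta(-Y) = -\eta(Y)$ while $h(-Y) = h(Y)$. Adding the three APS formulas,
\[
\ind_{\C} D^+_{M_1} + \ind_{\C} D^+_{M_2}
= \int_{M_1}\hat{A} + \int_{M_2}\hat{A} - \frac{h(Y)+\eta(Y)}{2} - \frac{h(Y) - \eta(Y)}{2}
= \int_{M_1\cup_Y M_2}\hat{A} - h(Y),
\]
and comparing with the APS formula for the closed manifold $\ind_{\C} D^+_{M_1 \cup_Y M_2} = \int_{M_1 \cup_Y M_2}\hat{A}$ yields
\[
\ind_{\C} D^+_{M_1} + \ind_{\C} D^+_{M_2} + \dim_{\C}\Ker\D = \ind_{\C} D^+_{M_1 \cup_Y M_2},
\]
which is exactly the claim. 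Alternatively, and perhaps more robustly, one can give a gluing-theoretic proof: identify the APS index of $D^+_{M_i}$ with the $L^2$-index of the Dirac operator on the cylindrical-end manifold $\hat{M}_i = M_i \cup ([0,\infty)\times Y)$ (or $(-\infty,0]\times Y$ for $M_2$), and then use the standard cut-and-paste / parametrix argument for elliptic operators on manifolds glued along a cylinder; the correction term $\dim_{\C}\Ker\D$ is precisely the dimension of the space of translation-invariant (bounded, non-decaying) solutions on the neck, i.e. the contribution of the zero-modes that are not $L^2$ on either side but are $L^2$ after gluing.

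The main obstacle I expect is not conceptual but the careful matching of conventions: the sign in front of $\frac{h+\eta}{2}$, whether the "index with APS boundary condition" in the paper means the index of $D^+$ with the projection onto non-negative (or positive) spectrum imposed, and how $h(Y)$ is split between the two pieces (in APS, the boundary kernel is conventionally assigned to one side via the choice $P_{\geq 0}$ vs $P_{>0}$). One must check that the definition of $\ind_{\C} D^+_{M_i}$ used throughout the paper (consistent with the definition of $n(Y,\frakt,g)$ in \eqref{n}) is the one for which the extra zero-mode term appears with coefficient $+1$ on the left-hand side; if instead the convention already absorbs $h(Y)$ asymmetrically, the $\dim_{\C}\Ker\D$ term might partially cancel, so this must be pinned down. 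Once the conventions are fixed, the proof is a two-line application of APS plus the orientation-reversal behavior of $\eta$, together with locality of the index density.
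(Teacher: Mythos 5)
Your primary argument — apply the APS index theorem to $M_1$, $M_2$, and the closed manifold $M_1 \cup_Y M_2$, use locality of the $\hat{A}$-integrand together with $\eta(-Y) = -\eta(Y)$, $h(-Y)=h(Y)$, and add — is correct; the arithmetic in your display is right, and the $\eta$ terms do cancel to leave exactly $-h(Y) = -\dim_\C\Ker\D$. Interestingly, the paper opens its own proof by noting exactly this: that the statement ``can be checked by the Atiyah--Singer--Patodi index theorem immediately,'' and then deliberately chooses not to present that argument. Instead, the paper gives the proof that you describe as the ``alternative'' route: it identifies the APS index of each $D^+_{M_i}$ with a weighted-Sobolev (cylindrical-end) index, invokes the gluing formula $\ind_{\C}D^+_{M_1\cup_Y M_2} = \ind_{\C}D^+_{M_1,\alpha} + \ind_{\C}D^+_{M_2,-\alpha}$ from Donaldson's book, and then uses the relation $\ind_{\C}D^+_{M_2,-\alpha} = \ind_{\C}D^+_{M_2} + \dim_\C\Ker\D$ for crossing a zero-eigenvalue wall of the weight. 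The reason for this choice is made explicit in the paper: the gluing/weighted-norm route is the one that survives when one of the pieces is replaced by the non-compact periodic-end manifold $W[-\infty,0]$ in Lemma \cref{prop: excision2}, whereas the direct APS computation with $\hat{A}$-densities and $\eta$-invariants does not transfer to that setting (there is no closed manifold and no compactly supported index density on a periodic end). So your main proof is valid for this lemma in isolation, but it buys less for what comes next; your flagged ``alternative'' is the one the paper actually needs and uses. Your caution about sign conventions is well placed but the conventions you adopt (APS boundary condition = projection onto the nonnegative spectrum, $h$ assigned to the boundary term with coefficient $\tfrac12$ on each side) are consistent with \cite{APSI} and with the paper's definition of $n(Y,\frakt,g)$ in \eqref{n}, so no correction is needed.
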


\begin{proof}
This can be checked by the Atiyah--Singer--Patodi index theorem~\cite{APSI} immediately, but we give a bit more direct proof to make clear the following \cref{prop: excision2}.
 
We follow an argument given in Donaldson's book~\cite{Do02}, mainly \cite[Subsubsection~3.3.1]{Do02}.
For $\alpha \in \R$ which is not a spectrum of $\D$,
denote by $\ind_{\C}D^+_ {M_{1}, \alpha}$ the Fredholm index defined using the weighted Sobolev norm described as 
\[
\|f\|_{L^{2}_{k, \alpha}} = \|e^{\alpha t}f\|_{L^{2}_{k}}
\]
at the end of $M_{1} = M_{1} \cup [0,\infty) \times Y$.
Take $\alpha>0$ so that $|\alpha|$ is smaller than the absolute value of the smallest non-zero eigenvalue of $\D$. 
Then we obtain
\[
\ind_{\C}D^+_ {M_{1}\cup_{Y} M_{2}} = \ind_{\C}D^+_ {M_{1},\alpha} + \ind_{\C}D^+_ {M_{2},-\alpha}
\]
by the gluing formula, Equation~(3.2) of \cite{Do02}.
Hence it suffices to show that
\begin{align}
\label{eq: ind weigh ker}
\ind_{\C}D^+_ {M_{1},\alpha} + \ind_{\C}D^+_ {M_{2},-\alpha}
= \ind_{\C}D^+_ {M_{1}} + \ind_{\C}D^+_ {M_{2}} + \dim_{\C} \Ker\D.
\end{align}
By the definition of the APS-boundary condition,
we have 
\[
\ind_{\C}D^+_ {M_{1},\alpha} = \ind_{\C}D^+_ {M_{1}}.
\]
On the other hand, we have that 
\[
\ind_{\C}D^+_ {M_{2},-\alpha} = \ind_{\C}D^+_ {M_{2}} + \dim_{\C} \Ker\D
\]
by \cite[Proposition~3.10]{Do02}, which is shown considering a certain ordinary equation \cite[Lemma~3.11]{Do02} corresponding to the cylinder $(-\infty, \infty) \times Y$ appearing the neck stretching of $M_{1}\cup_{Y} M_{2}$.
Now we have checked \eqref{eq: ind weigh ker} and this completes the proof.
 \end{proof}
 
 The proof of \cref{prop: excision1} involves only near the neck of $M_{1}\cup_{Y} M_{2}$.
 Even if we replace $M_{1}$ with a manifold with an additional end, we obtain a similar result as far as we work in Fredholm setting.
This makes clear the following \lcnamecref{prop: excision2}:
  
 \begin{lem}
\label{prop: excision2}
Let $M$ be a compact spin manifold bounded by $Y$ with the orientation $\del M = -Y$.
Equip $M$ with a metric so that the metrics are product metrics near the boundary.
Then we have
\begin{align}
\label{eq: excision}
\ind_{\C}D^+_{W[-\infty,0]}  + \ind_{\C}D^+_ {M} + \dim_{\C} \Ker\D = \ind_{\C}D^+_ {M_{\infty}},
\end{align}
where 
\[
M_{\infty} =  \cdots  \cup_{Y} W\cup_{Y} W \cup_{Y} M.
\]
\end{lem}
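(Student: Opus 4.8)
The plan is to mimic the proof of \cref{prop: excision1}, replacing the compact manifold $M_1$ there with the half-periodic manifold $W[-\infty,0]$, and carrying everything out in the Fredholm setting guaranteed by \cref{fredd}(i). First I would fix the Riemannian metric on $M_\infty = \cdots \cup_Y W \cup_Y W \cup_Y M$ so that it agrees with $g_{W[-\infty,0]}$ on the periodic part $W[-\infty,0]$, with the chosen product metric on $M$, and is the product metric $dt^2 + \pr^* g_Y$ along the neck $Y$ where $W[-\infty,0]$ and $M$ are glued. Then $M_\infty$ is obtained by gluing $W[-\infty,0]$ (a manifold with one periodic end and one cylindrical end modeled on $Y$) to $M$ along $Y$. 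Note that $\ind_\C D^+_{W[-\infty,0]}$ makes sense and is finite precisely because of \cref{fredd}(i): for the pulled-back PSC metric the Dirac operator $D^+_{A_0}$ on $\wt X$ is invertible for all weights, so the APS-type operator \eqref{Dirac} on $W[-\infty,0]$ is Fredholm.

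Next I would run the weighted-norm gluing argument exactly as in the proof of \cref{prop: excision1}. Along the neck $Y$ separating $W[-\infty,0]$ from $M$, introduce a collar coordinate $t$ and choose $\alpha>0$ smaller in absolute value than the smallest nonzero eigenvalue of the three-dimensional Dirac operator $\D$ on $Y$. Applying Donaldson's gluing formula (Equation~(3.2) of \cite{Do02}) to the decomposition $M_\infty = W[-\infty,0] \cup_Y M$ gives
\[
\ind_\C D^+_{M_\infty} = \ind_\C D^+_{W[-\infty,0],\alpha} + \ind_\C D^+_{M,-\alpha}.
\]
Here the gluing formula applies because the end of $W[-\infty,0]$ being glued is the cylindrical end modeled on $Y$; the other, periodic, end is untouched by the gluing and simply contributes its (already-Fredholm) tail, so the argument is purely local near the neck $Y$, exactly as in \cref{prop: excision1}. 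Then, just as there, the APS boundary condition gives $\ind_\C D^+_{W[-\infty,0],\alpha} = \ind_\C D^+_{W[-\infty,0]}$ (the small positive weight at the $Y$-end does not change the index, since shifting by $\alpha$ across no eigenvalue of $\D$ leaves the extended-solutions count unchanged), while \cite[Proposition~3.10]{Do02} gives $\ind_\C D^+_{M,-\alpha} = \ind_\C D^+_M + \dim_\C \Ker \D$. Combining these three identities yields \eqref{eq: excision}.

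The step I expect to need the most care is justifying that Donaldson's gluing formula and the weighted-index comparison, which in \cite{Do02} are stated for manifolds with cylindrical ends, apply verbatim when one of the pieces, $W[-\infty,0]$, carries an additional non-cylindrical (periodic) end. The point to emphasize is that the gluing construction only involves cutting and re-gluing in a fixed-size collar of the neck $Y$ and comparing parametrices there; the periodic end enters only through the fact that $\ind_\C D^+_{W[-\infty,0]}$ is already a well-defined Fredholm index by \cref{fredd}(i), which is exactly the hypothesis that a PSC metric is equipped with $X$. So the remark preceding the \lcnamecref{prop: excision2} — that the proof of \cref{prop: excision1} involves only the neck — is precisely what makes the argument go through, and I would spell this out rather than re-deriving the analysis. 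No separate vanishing or cohomology computation is needed here; that work has already been done in \cref{cohomology} and is not required for this index identity.
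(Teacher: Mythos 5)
Your proposal matches the paper's approach exactly: the paper's ``proof'' of this lemma is precisely the remark preceding it, that the argument for \cref{prop: excision1} (Donaldson's gluing formula, the APS weighted-index comparison via \cite[Proposition~3.10]{Do02}) is local near the separating neck $Y$ and therefore carries over once $\ind_{\C}D^+_{W[-\infty,0]}$ is known to be a Fredholm index by \cref{fredd}(i). Your write-up fills in the same steps in slightly more detail, but there is no difference in method.
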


Now we are ready to prove \cref{prop: excision}.

\begin{proof}[Proof of \cref{prop: excision}]
Take a compact spin bound $M$ of $Y$ with the orientation $\del M = -Y$.
Take a metric on $M$ so that the metrics are product metrics near the boundary.


Now we shall check
\begin{align}
\label{eq: ind infty}
\ind_{\C}D^+_ {M_{\infty}}
= -\frac{\sigma(M)}{8} +\lambda_{SW}(X,\fraks).
\end{align}
Note the sign:
this comes from the orientation of $M$ with $\del M = -Y$.
Indeed, by \cite[Lemma~2.21]{Lin19}, it follows from the existence of PSC metric on $X$ that
\begin{align}
\label{Jimila1}
-\lambda_{SW}(-X,\fraks) =  \lambda_{SW}(X,\fraks).
\end{align}
(Precisely, $X$ is supposed to be an integral homology $S^{1} \times S^{3}$ in \cite{Lin19}, but the proof of \cite[Lemma~2.21]{Lin19} is valid also for rational homology $S^{1} \times S^{3}$'s without any changes.)
On the other hand, for a PSC metric $g$ on $X$, we have
\begin{align}
\label{Jimila3}
  \lambda_{SW}(-X,\fraks) 
 = -w(-X,g,0) 
 = -\ind_{\C} D^+_{M_{\infty}} - \frac{\sigma(M)}{8}.
\end{align}
Equation~\eqref{eq: ind infty} is deduced from \eqref{Jimila1} and \eqref{Jimila3}.

On the other hand, we also have
\begin{align}
\label{eq: ind hat}
\ind_{\C}D^+_ {M}
= -\frac{\sigma(M)}{8} -n(Y,\frakt,g)-\dim_{\C}\Ker\D.
\end{align}
Indeed, it follows that
\begin{align}
\label{eq: ind hat1}
\ind_{\C}D^+_ {-M} + \ind_{\C}D^+_ {M} +\dim_{\C} \Ker\D = 0
\end{align}
 because of \cref{prop: excision1} and 
\[
\ind_{\C}D^+_ {-M\cup_{Y} M} = \frac{\sigma(-M)}{8} + \frac{\sigma(M)}{8}=0.
\]
By the definition of $n(Y,\frakt,g)$, we have 
\begin{align}
\label{eq: ind hat2}
n(Y,\frakt,g) = \ind_{\C}D^+_ {-M} + \frac{\sigma(-M)}{8}.
\end{align}
Equation~\eqref{eq: ind hat} is deduced from \eqref{eq: ind hat1}, \eqref{eq: ind hat2}.
%

Combining \cref{prop: excision2} with \eqref{eq: ind infty} and \eqref{eq: ind hat}, we obtain the desired equality \eqref{eq: excision aim}.
\end{proof}

\section{The boundedness result} \label{section: The boundedness result}
In this section, we prove a certain boundedness result in order to construct Bauer--Furuta type invariant. We mainly follow the methods given in \cite{Ma03, Kha15}. The situation is similar to that in \cite{IT20}, which gives a Bauer-Furuta invariant for 4-manifolds with conical end.

Our main result in this section is: 
\begin{theo}\label{bounded}
There exists $\delta_{3} >0$ and a constant $R>0$ such that the following conclusion holds. 
Let $\delta$ be an element in $(0,\delta_3]$. 
Suppose that a pair $(x,y)$ of 
 \[
x \in  \cU_{k, \delta}  ({W[-\infty, 0]}) 
\]
and $y: [0,\infty) \to V(Y) $
satisfy the following conditions: 
\begin{itemize}
\item[(i)] the element $x+ (A_0, \Phi_0)$ is a solution to the equation \eqref{SW eq} on $W[-\infty, 0]$, 
\item[(ii)]the element $y$ is a solution to the Seiberg--Witten equations on $\R^{\geq 0}\times Y$,
\item[(iii)] $y$ is temporal gauge, i.e. $d^{*}b(t)=0$ for each $t$, where $y(t)= (b(t), \psi(t))$, and $y$ is of finite type, 
\item[(iv)] $x|_{Y} = y (0)$, and 
\item[(v)]  $ | \lim_{t\to \infty }  CSD ( y(t) ) | < \infty$.
\end{itemize}
  Then we have the following universal bounds: 
  \[
  \| x\|_{L^2_{k, \delta} } < R  \text{ and } \|y(t)\|_{L^2_{k-\frac{1}{2}}} < R \  (\forall t  \geq 0). 
  \]
\end{theo}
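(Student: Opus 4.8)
The plan is to follow the standard Bauer--Furuta boundedness scheme of Manolescu~\cite{Ma03} and Khandhawit~\cite{Kha15}, adapted to the half-periodic end exactly as in Iida--Taniguchi~\cite{IT20}, with the PSC metric on the periodic end playing the role that controls the spinor at infinity. First I would set up the basic \emph{a priori} $C^0$-bound on the spinor: on the cylindrical end $\R^{\geq 0}\times Y$ this is the usual finite-type/energy argument of Manolescu (using condition (iii) and (v)), so $\|\psi(t)\|_{C^0}$ is bounded; on the periodic PSC region $W[-\infty,-1]$ the Weitzenb\"ock formula together with the positivity of the scalar curvature forces $\|\Phi\|_{C^0}$ to be controlled, since a solution of the Seiberg--Witten equations with $|\Phi|$ attaining an interior maximum in the PSC region must have $\Phi$ small there (the maximum principle applied to $\Delta|\Phi|^2$). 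Patching these with interior elliptic estimates on the finitely many ``transitional'' pieces $W_0$ and the collar gives a global $C^0$-bound $\|\Phi\|_{C^0(W[-\infty,0])}\le C$.

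Next I would bootstrap this $C^0$-bound to the weighted $L^2_{k,\delta}$-bound on $x$. Writing $x=(a,\phi)$ in the double Coulomb slice, the Seiberg--Witten equation reads $L_{W[-\infty,0]}(x) = -C_{W[-\infty,0]}(x) + (\text{curvature/boundary terms})$, and by \cref{cohomology} (together with \cref{prop:compari op}) the operator $L_{W[-\infty,0]}\oplus(p^0_{-\infty}\circ r)$ is Fredholm for $\delta\in(0,\delta_1)$, in fact the relevant AHS part is an isomorphism. Since $C_{W[-\infty,0]}$ is a compact (hence in particular bounded on bounded sets) quadratic term by \cite[Proposition~2.13]{Lin19} once $\delta>0$, and since the $C^0$-bound on $\Phi$ controls the nonlinearity pointwise, one gets $\|x\|_{L^2_{1,\delta}}\le C(1+\|x\|_{L^2_{1,\delta}})^{?}$ in the usual way; the key point is that the quadratic term with one factor controlled in $C^0$ is \emph{linearly} controlled in $L^2_{1,\delta}$ by the other factor, so after absorbing one obtains a genuine bound $\|x\|_{L^2_{1,\delta}}<R$. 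Then elliptic bootstrapping on $\hat W[-\infty,0]$ (interior estimates on the periodic region, APS estimates near $Y$, using that $y(0)$ is itself bounded in $L^2_{k-1/2}$ by the cylindrical analysis) upgrades this to $\|x\|_{L^2_{k,\delta}}<R$, and simultaneously the matching condition (iv) feeds back to bound $\|y(t)\|_{L^2_{k-1/2}}$ uniformly in $t$ via the standard Conley-index compactness on the cylinder, giving the second estimate.

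I expect the main obstacle to be the interaction between the \emph{weight} $\delta$ and the \emph{PSC estimate at the periodic end}: one must choose $\delta_3$ small enough that all of \cref{rem deltazero}, \cref{cohomology}, \cref{Slice} apply simultaneously, and at the same time check that the weighted norm does not destroy the maximum-principle argument for $\Phi$ on $W[-\infty,-1]$ --- i.e. that the exponentially weighted $L^2_{k,\delta}$-bound on $\phi$ is genuinely \emph{equivalent} information to a $C^0$-decay statement at the end, so that the nonlinear term $\rho(a)\phi$ can be absorbed even though both factors are only in a weighted space. This is precisely the place where \cite{IT20} had to argue carefully in the conical-end case, and the periodic case requires the analogue: a unique-continuation / exponential-decay input (of Taubes type, \cite{T87}) to guarantee that any finite-energy solution on the PSC periodic end decays, so that the $C^0$ and weighted-$L^2$ pictures are compatible. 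Once that compatibility is in place, the rest is the routine Manolescu--Khandhawit patching.
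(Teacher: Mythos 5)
Your outline — Weitzenb\"ock/maximum-principle control of $|\Phi|$ on the PSC region, then bootstrap to weighted Sobolev bounds — is a reasonable direction, but it differs structurally from the paper's proof and leaves the two hardest points open.

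The paper's proof does not go through a pointwise spinor estimate at all. It first pastes $(x,y)$ into a finite-type $W[-\infty,0]$-trajectory $(A,\Phi)$ on $\hat W[-\infty,0]$ (\cref{lem: Tira gl}), and then runs the Kronheimer--Mrowka energy functional argument: because the structure is spin, $CSD$ is fully gauge-invariant and the set of its critical \emph{values} is a compact subset of $\R$; condition (v) forces $(A,\Phi)$ to be $L^2_{k-1/2}$-asymptotic to a critical point $\mathfrak{c}$, so $\mathcal{E}^{\mathrm{top}}(A,\Phi)=C_X-CSD(\mathfrak{c})$ is bounded uniformly in $(x,y)$. Since each of the three energy contributions (periodic end, cylinder, joint) is bounded below, the analytic energy restricted to $W[-\infty,-1]$ is separately uniformly bounded. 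At that point the paper invokes \cref{exp}, whose proof compares the gauge transformation furnished by J.~Lin's Lemma~4.10 of~\cite{Lin19} with the double Coulomb slice \cref{Slice} and \cref{global}; the $y(t)$ bound then follows from Khandhawit's cylinder argument as in the compact case. Thus the paper's route replaces your low-level Weitzenb\"ock-plus-bootstrap scheme with a high-level energy argument plus a citation of an already-established technical lemma.

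Two genuine gaps in your sketch correspond exactly to these two moves. First, the \emph{uniformity} of the bound: condition (v) only gives finiteness of $\lim_{t\to\infty}CSD(y(t))$ for each individual $(x,y)$, not a bound independent of $(x,y)$; what makes the constant universal in the paper is that the limiting value is a critical value of $CSD$ and the set of critical values is compact. Your $C^0$ argument never pins this down. Second, you correctly flag that passing from a $C^0$ bound on $\Phi$ to an $L^2_{k,\delta}$ bound on $x=(a,\phi)$ — in particular obtaining exponential decay of the connection part at the periodic end — is where the real work lies, but you do not close that gap; the paper closes it precisely by delegating to J.~Lin's Lemma~4.10 rather than re-deriving it. A self-contained proof along your lines would need to re-establish, in the weighted periodic-end setting, the Taubes-type decay estimate that \cref{exp} imports, and would need to make the uniform energy bound explicit via the compactness of the critical set; absent those two ingredients the argument is incomplete.
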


In order to prove \cref{bounded}, we use several corresponding notions used in \cite{Ma03}. 

\begin{defi} We consider a Riemannian manifold 
\[
\hat{W}[-\infty, 0]  =  W[-\infty, 0] \cup (\R^{\geq 0} \times Y)
\]
obtained by gluing the half-cylinder $( \R^{\geq 0} \times Y, dt^2+ g_Y)$ and $W[-\infty, 0]$ along their boundary. 
A solution $(A, \Phi)$ to the Seiberg--Witten equations on $\hat{W}[-\infty, 0]$ is called {\it ${W}[-\infty, 0]$-trajectories}.
 If a $W[-\infty, 0]$-trajectory $(A, \Phi)$ satisfies 
\[
\sup_{t \in \R^{\geq 0}} |CSD (A|_{\{t\} \times Y } ) |< \infty \text{ and } \|\Phi\|_{C^0( \R^{\geq 0} \times Y )} < \infty, 
\]
then $(A, \Phi)$ is called a {\it finite type} $W[-\infty, 0]$-trajectory. 
\end{defi}

Let us note the following boundedness result:

\begin{theo}\label{exp} Let $C$ be a positive real number and
\[
(A, \Phi)  \in (A_0, 0 ) +  L^2_{k, \delta}( i\Lambda_{W[-\infty, 0] \cup [0, 1] \times Y }^1 ) \oplus L^2_{k, \delta} ( S^+_{W[-\infty, 0]\cup [0, 1] \times Y })
\]
be a solution to $\mathcal{F} (A, \Phi) = 0$ such that 
\[
\mathcal{E}^{top} (A, \Phi)  \leq C
\]
and 
\[
(A, \Phi) \in \mathcal{U}_{k, \delta} ( {W}[-\infty, 0]). 
\]

Then, there exists $\delta_3$ such that for any $\delta \in (0, \delta_3)$, the inequality 
\[
\| (A, \Phi) - (A_0, 0) \|_{L^2_{k, \delta} (W[-\infty, 0] )} \leq D(C)
\]
holds, where $D(C)$ is a constant depending only on $D$.

\end{theo}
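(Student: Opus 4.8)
The plan is to adapt the standard interior‐plus‐cylindrical‐end bootstrapping of \cite{Ma03,Kha15} to the periodic situation, using the analytic input already assembled in this section: the existence of the weight $\delta_0$ from \cref{rem deltazero}, the Fredholm/invertibility results \cref{fredd} and \cref{cohomology}, and the compactness of the quadratic term from \cite[Proposition~2.13]{Lin19}. First I would reduce \cref{exp} to an a priori $C^0$ (or $L^\infty$) bound on the spinor and the curvature over the periodic end. Here the PSC hypothesis does the real work: on the region $W[-\infty,-1]$ the metric is periodic with positive scalar curvature, so the Weitzenb\"ock formula $D_A^-D_A^+ = \nabla_A^*\nabla_A + \tfrac{s}{4} + \tfrac12\rho(F^+_{A^t})$ together with the Seiberg--Witten equation forces the pointwise identity controlling $|\Phi|$; the standard maximum-principle argument (as in \cite[Section~2]{Lin19} and in the closed case) yields $\|\Phi\|_{C^0(W[-\infty,-1])}\le c\sqrt{-\inf s}$, hence $\Phi$ is uniformly $L^\infty$ on the whole periodic end. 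The bound on $F^+_{A^t}$ over the end then follows from the curvature part of the equation, $\tfrac12 F^+_{A^t} = \rho^{-1}(\Phi\Phi^*)_0$.

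The second step converts these pointwise bounds into the weighted $L^2_{k,\delta}$ bound asserted. The point is that $L_{W[-\infty,0]}$, which is the linearization of $\mathcal F_{W[-\infty,0]}$, is a Fredholm operator with trivial (or finite-dimensional, controlled) kernel after restricting to the double Coulomb slice $\cU_{k,\delta}$, for $\delta\in(0,\delta_1)$; this is exactly what \cref{cohomology} and \cref{prop:compari op} give for the $\Lambda^1$ part, and \cref{fredd}(i) gives an outright isomorphism for the Dirac part (PSC makes the periodic Dirac operator invertible for all $\delta$, by \cref{allops}). Writing $x = (a,\phi)$ and $\mathcal F_{W[-\infty,0]}(A_0+a,\phi)=0$ as $L_{W[-\infty,0]}x = -C_{W[-\infty,0]}(x)$, one inverts $L_{W[-\infty,0]}$ (modulo its finite-dimensional obstruction) to estimate $\|x\|_{L^2_{k,\delta}}$ by $\|C_{W[-\infty,0]}(x)\|_{L^2_{k-1,\delta}}$; since $C_{W[-\infty,0]}(x)=(-(\phi\phi^*)_0,\rho(a)\phi)$ is quadratic and, crucially, a \emph{compact} operator for $\delta>0$, the standard Banach‐space bootstrap — first gaining an $L^2_{1,\delta}$ bound from the $L^\infty$ bound and the finite energy hypothesis $\mathcal E^{top}(A,\Phi)\le C$, then elliptic regularity on finite pieces to climb to $L^2_{k,\delta}$, patched with the interior Schauder estimate near the boundary collar $[0,1]\times Y$ — closes up to a constant $D(C)$. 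The finite energy bound $\mathcal E^{top}\le C$ is what pins down the component of $x$ in the (finite-dimensional) space where $L_{W[-\infty,0]}$ fails to be invertible.

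For the reduced statement \cref{bounded}, once \cref{exp} is in hand the argument is the Khandhawit–Manolescu matching argument. The cylindrical-end solution $y\colon[0,\infty)\to V(Y)$ of finite type satisfies the usual Morse-theoretic bound $\|y(t)\|_{L^2_{k-1/2}}<R$ for all $t\ge0$ uniformly, by \cite[Proposition~3]{Ma03} applied to the $3$-manifold $Y$, using hypotheses (ii), (iii) and (v) (finiteness of the limit of $CSD$). The boundary matching condition $x|_Y=y(0)$ then bounds the restriction of the $4$-dimensional solution, in particular its energy $\mathcal E^{top}$ is controlled by a universal $C$; feeding this $C$ into \cref{exp} yields $\|x\|_{L^2_{k,\delta}}<R$ after enlarging $R$. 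Taking $\delta_3\le\min\{\delta_0,\delta_1,\delta_2\}$ makes all the cited results simultaneously available.

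The main obstacle I anticipate is the genuinely non‐compact, periodic geometry entering the a priori estimates: unlike the cylindrical‐end case one cannot directly quote \cite[Proposition~3]{Ma03} or the closed‐manifold elliptic estimates, and the exponential‐decay mechanism must be supplied by hand. Concretely, the delicate point is establishing that a finite‐type solution over $W[-\infty,0]$ actually lies in the weighted space $L^2_{k,\delta}$ for small $\delta>0$ — i.e. decays — rather than merely being locally bounded; this is where the invertibility of $D^+_{A_0}$ on $\widetilde X$ for all $\delta$ and the acyclicity of the AHS complex for $\delta\in(0,\delta_0)$ (\cref{allops}) are indispensable, converting the pointwise PSC bound into genuine exponential decay down the periodic end via a Taubes‐type argument as in \cite{T87}. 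The patching of the parametrix across the junction between the periodic end and the product collar $[0,1]\times Y$, already used in the proof of \cref{fredd}, is the technical heart, and handling the finite-dimensional obstruction space of $L_{W[-\infty,0]}\oplus(p^0_{-\infty}\circ r)$ correctly (it is nonzero in general, governed by $n(Y,\frakt,g)$ and the Dirac index computed in \cref{prop: excision}) is what requires the energy hypothesis rather than coming for free.
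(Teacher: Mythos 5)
Your proposal takes a genuinely different route from the paper's and has a gap that I think would be hard to close in the form you sketch. The paper's proof of \cref{exp} is short and indirect: it cites J.~Lin's exponential-decay estimate \cite[Lemma~4.10]{Lin19}, which produces a gauge transformation $g'$ on the periodic end in which the solution already has $L^2_{k,\delta}$ norm bounded by $D(C)$ for $\delta\in(0,\delta_3')$, and then applies the global slice theorem (\cref{global}, built on \cref{Slice}) to transfer that bound continuously to the double Coulomb gauge, giving $\|g^*(A,\Phi)-(A_0,0)\|_{L^2_{k,\delta}}\le C'D(C)$. In other words, the substantive periodic-end analysis is outsourced to \cite{Lin19}, and what remains to do is a gauge-fixing comparison. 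You, by contrast, try to rebuild the estimate from scratch in the double Coulomb gauge via a Weitzenb\"ock/maximum-principle $L^\infty$ bound followed by Fredholm inversion of $L_{W[-\infty,0]}$ and a bootstrap.

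The gap in your approach is in the passage from the pointwise bound to the weighted $L^2_{k,\delta}$ bound. Inverting $L_{W[-\infty,0]}$ on the slice gives an estimate of the form $\|x\|_{L^2_{k,\delta}}\lesssim \|C_{W[-\infty,0]}(x)\|_{L^2_{k-1,\delta}}+\|x\|_{L^2_\delta}$, but the quadratic term is not small and the right-hand side is not a priori controlled by $\mathcal E^{\text{top}}\le C$ alone: a uniform $L^\infty$ bound on $\Phi$ and $F^+_{A^t}$ does \emph{not} bound $\|e^{\delta\tau}\Phi\|_{L^2}$, since $e^{\delta\tau}\to\infty$ down the periodic end. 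The hypothesis already places $(A,\Phi)-(A_0,0)$ in $L^2_{k,\delta}$, so membership in the weighted space is assumed; what must be established is a \emph{uniform} exponential decay rate in terms of $C$, which is exactly the content of \cite[Lemma~4.10]{Lin19} and is what your ``standard bootstrap'' step quietly skips. You acknowledge this at the end (``must be supplied by hand\ldots via a Taubes-type argument as in \cite{T87}''), but that supplied-by-hand decay estimate is the whole ballgame, and without it the chain of inequalities does not close. You also omit the role of the global slice theorem entirely; if one does import a decay estimate in some other gauge (as the paper does), one still needs \cref{global} to convert it to a bound in $\mathcal U_{k,\delta}$. Finally, your last paragraph drifts into a proof of \cref{bounded}, which is a separate statement not asked for here.
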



\begin{proof}
We compare gauge transformations constructed by J.~Lin \cite[Subsection~4.2]{Lin19} with the global slice obtained in \cref{Slice}.
The proof of \cite[Lemma 4.10]{Lin19} implies that there exists a constant $\delta'_3$ and a gauge transformation $g'$ on $W[-\infty, 0]$  such that for any $\delta \in (0, \delta_3')$,
\[
\|(g')^* (A, \Phi) - (A_0, 0) \|_{L^2_{k, \delta} (W[-\infty, 0] )} \leq D(C). 
\]
Define 
\[
\delta_3 := \min \{\delta_3', \delta_2\} . 
\]
On the other hand, by \cref{global}, the map obtained by giving a slice 
\[
\mathcal{U}_{k, \delta}(W[-\infty, 0] ) \xrightarrow{\cong}  L^2_{k,  \delta} (i\Lambda^1_{W[-\infty, 0]} )_{CC} \times \G_{k+1, \delta}(W[-\infty, 0] )
\]
is continuous. 
This implies there is a gauge transformation $g$ such that
\[ 
g^* (A, \Phi) \in \mathcal{U}_{k, \delta} ( {W}[-\infty, 0])
\]
and 
\[
\|g^* (A, \Phi) - (A_0, 0) \|_{L^2_{k, \delta} (W[-\infty, 0] )} \leq C'\|(g')^* (A, \Phi) - (A_0, 0) \|_{L^2_{k, \delta} (W[-\infty, 0] )}\leq C'D(C). 
\]
This gives the desired result. 
\end{proof}

The topological energy $\mathcal{E}^{\text{top}}$ and the analytic energy $\mathcal{E}^{\text{top}}$ for configurations on $\hat{W}[-\infty, 0]$ are defined along the book by Kronheimer--Mrowka~\cite[Definition 4.5.4]{KM07}.
Note that, for a configuration $(A, \Phi)$ converging to $(A_0,0)$ on the periodic end, the boundary terms in the topological energy corresponding to the end vanishes, while the boundary terms corresponding to the cylindrical end may survive.
If such a configuration $(A, \Phi)$ is a ${W}[-\infty, 0]$-trajectory and is asymptotic to $\mathfrak{c}$ on the cylindrical end, we have that
\begin{align}
\label{eq: top en and csd}
\mathcal{E}^{\text{top}}(A, \Phi) = C_{X} - CSD(\mathfrak{c}),
\end{align}
where $C_{X}$ depends only on $X$ and the fixed metric and spin structure on $X$.
Moreover, we have that $\mathcal{E}^{\text{top}}(A, \Phi) = \mathcal{E}^{\text{an}}(A, \Phi)$ as well as for a configuration over a compact $4$-manifold.


\begin{proof}[Proof of \cref{bounded}]
Let $\delta_3$ be the constant given in \cref{exp}.
Suppose that  
\[
(x, y ) \in  \mathcal{U}_{k, \delta} \oplus ( \operatorname{Map}  ([0,\infty) ,  L^2_{k-\frac{1}{2} } (i\Lambda^1_Y )  \oplus L^2_{k-\frac{1}{2} } (S^+_{Y}  ) )
\]
satisfies the assumption of \cref{bounded}.
First, we state a pasting lemma: 

\begin{lem}
\label{lem: Tira gl}
The pair $(x, y )$ gives rise to a finite type $W[-\infty, 0]$-trajectory $(A , \Phi)$. 
\end{lem}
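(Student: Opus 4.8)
The plan is to show that the pair $(x,y)$ can be glued along $Y$ into a single configuration on $\hat W[-\infty,0]$ which solves the Seiberg--Witten equations and has finite topological energy, and then verify the boundedness clauses in the definition of a finite type $W[-\infty,0]$-trajectory. First I would recall that $x + (A_0,\Phi_0)$ solves \eqref{SW eq} on $W[-\infty,0]$ with $x\in\cU_{k,\delta}$, while $y$ solves the Seiberg--Witten equations on $\R^{\geq 0}\times Y$ in temporal gauge; by condition (iv), $x|_Y = y(0)$, so the restrictions of the two configurations to $Y = \partial W[-\infty,0] = \{0\}\times Y$ agree. Using a standard cut-off/patching argument together with an $L^2_{k,\delta}$-local slice on the collar $[0,1]\times Y$ (as in \cite{Kha15} and \cite{IT20}), one obtains a configuration $(A,\Phi)$ on $\hat W[-\infty,0]$, smooth after a gauge transformation, whose restriction to $W[-\infty,0]$ differs from the original by an element of $\G_{k+1,\delta}$ and whose restriction to the cylindrical end is gauge-equivalent to $y$; elliptic regularity for the Seiberg--Witten equations on the cylinder and on the periodic end upgrades this to a genuine solution.

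Next I would check the finite type condition. Along the cylindrical end, $(A,\Phi)$ restricts (up to gauge) to the path $y(t)$; condition (v) gives $|\lim_{t\to\infty} CSD(y(t))| < \infty$, and since $y$ is of finite type in the sense of \cite{Ma03}, the standard a priori estimates for finite type trajectories on $\R\times Y$ (Manolescu's Proposition~3 in \cite{Ma03}) yield $\sup_{t\geq 0}|CSD(A|_{\{t\}\times Y})| < \infty$ together with a $C^0$-bound on $\Phi$ over $\R^{\geq 0}\times Y$. This is precisely what is needed for $(A,\Phi)$ to be a finite type $W[-\infty,0]$-trajectory. I would also note that the $W[-\infty,0]$-part contributes no new obstruction here, since the finite type condition only constrains behaviour on the cylindrical end.

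I expect the main obstacle to be the gluing/regularity step on the collar: one must ensure that patching the double Coulomb slice configuration $x$ on $W[-\infty,0]$ with the temporal-gauge trajectory $y$ on $\R^{\geq 0}\times Y$ produces a configuration that is both in an appropriate function space on the non-compact manifold $\hat W[-\infty,0]$ and, after a single global gauge transformation, solves the equations exactly rather than just approximately. The temporal gauge on the cylinder and the double Coulomb condition near $Y$ are different gauge fixings, so one needs a transition gauge transformation supported near $\{0\}\times Y$ and must verify it lies in the correct (weighted, for the periodic side; unweighted decay, for the cylindrical side) gauge group; the matching condition (iv) is what makes this transition transformation smooth across $Y$. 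Once the glued solution is in hand, the finite type statement is a direct consequence of condition (v) and the cited finite type estimates, so no further difficulty arises there. The argument is essentially the same as the corresponding pasting lemma in \cite{IT20} for conical-end $4$-manifolds, with the conical end replaced by the periodic end $W[-\infty,0]$.
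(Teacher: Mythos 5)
Your proposal is correct and follows essentially the same route as the paper, which simply cites Khandhawit's pasting lemma (Corollary~4.3 of \cite{Khan15}) without spelling out the details; your sketch is a reasonable unpacking of that argument. One small remark: the gauge-transition concern you flag near $\{0\}\times Y$ is in fact automatically resolved, since the second condition in the double Coulomb slice ($d^*\mathbf{t}a=0$) forces the tangential boundary restriction of $x$ to land in the same Coulomb slice $V(Y)$ where the temporal-gauge path $y$ lives, so the matching condition (iv) is already a matching of configurations within a single slice and no transition gauge transformation is needed; the finite type conclusion then follows almost directly from hypotheses (iii) and (v), as you observe.
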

\begin{proof}This is essentially the same as the proof of \cite[Corollary~4.3]{Khan15}.
\end{proof}

It follows from \cref{lem: Tira gl} that we have a solution $(A, \Phi)$  to the Seiberg--Witten equations on $\hat{W}[-\infty, 0]$ whose topological energy is finite.

Recall that the set of critical points of $CSD$ modulo gauge is compact.
Since we consider a spin structure now, $CSD$ is gauge invariant.
Therefore the set of critical values of $CSD$ is compact.

Since we have assumed that $ | \lim_{t\to \infty }  CSD ( y(t) ) | < \infty$, we have that
\[
|CSD(y(t)) - CSD(y(t+1))| \to 0
\]
as $t \to \infty$, and therefore there exists a critical point of $CSD$ to which $(A, \Phi)$ is $L^2_{k-\frac{1}{2}}$-asymptotic as $t \to \infty$.
This combined with \eqref{eq: top en and csd} implies that $\mathcal{E}^{\text{top}} (A, \Phi)$ is uniformly bounded, and hence so is $\mathcal{E}^{\text{an}}(A, \Phi)$.

We claim that the analytic energy of $(A, \Phi)$ restricted to $W[-\infty,-1]$ is also uniformly bounded.
To see this, let us decompose $\hat{W}[-\infty,-1]$ into three parts:
the periodic part $W[-\infty,-1]$, the cylindrical part $\R^{\geq 0} \times Y$, and the `joint' between the periodic part and the cylindrical part.
We have seen that the analytic energy of $(A, \Phi)$ on $\hat{W}[-\infty,-1]$ is uniformly bounded, and this energy is the sum of the energies on these three parts.
Therefore, to prove that the analytic energy of $(A, \Phi)$ restricted to $W[-\infty,-1]$ is also uniformly bounded,
 it suffices to show that all of the energies on these three parts are bounded from below.
 But this is obvious to recalling the definition of the analytic energy. (See the proof of \cite[Lemma~4.8]{Lin19}.)

This uniform boundedness enables us to apply \cref{exp}, and thus we obtain the boundedness of $  \| x\|_{L^2_{k, \delta} } < R $:
\[
\|(A, \Phi) - (A_0, 0) \|_{L^2_{k,\delta} ( W[-\infty, 0] ) } \leq R
\]
for any $\delta \in [0,\delta_3)$.
The remaining boundedness result $\|y(t)\|_{L^2_{k-\frac{1}{2}}} < R$ follows from the same argument for $X$-trajectories, where $X$ is a compact $4$-manifold bounded by $Y$.
See \cite[Corollary~4.3]{Khan15} for example.
\end{proof}

\section{Relative Bauer--Furuta type invariant} \label{section: Relative Bauer--Furuta type invariant}
\label{section: rel BF inv}
In this section, we construct a relative Bauer--Furuta type invariant for 4-manifolds with periodic end and boundary $W[-\infty, 0]$. We mainly follow the methods given by Manolescu \cite{Ma03} and Khandhawit \cite{Kha15}. 

We consider a finite-dimensional approximation of the map 
\[
\mathcal{F}_{W[-\infty, 0]}  : \mathcal{U}_{k,  \delta}
 \to  \mathcal{V}_{k-1,  \delta} \oplus V (Y). 
 \]
We fix a weight $ \delta \in (0, \infty)$ satisfying 
\[
\delta \leq \min \{\delta_0, \delta_1, \delta_2, \delta_3\} 
\]
in the rest of this paper, where $ \delta_i$ are the constants appeared in \cref{rem deltazero}, \cref{cohomology}, \cref{Slice}, and \cref{bounded}. 
  Take sequences of subspaces 
 \[
  \mathcal{V}_1 \subset  \mathcal{V}_2 \subset \cdots \subset  \mathcal{V}_{k-1,  \delta} \text{ and } V^{\lambda_1}_{-\lambda_1} \subset V^{\lambda_2}_{-\lambda_2} \subset \cdots   \subset V (Y)  
  \]
  such that 
  \begin{itemize}
  \item[(i)] $(\im L_{W[-\infty, 0]} + p^{\lambda_n}_{-\lambda_n}\circ r)^{\perp_{\mathcal{V}_{k-1,  \delta} \oplus V (Y)}  } \subset \mathcal{V}_n \oplus  V^{\lambda_n}_{-\lambda_n} (Y)  $ 
 \item[(ii)] the $L^2$-projection $P_n :  \mathcal{V}_{k-1, \al}  \oplus V (Y) \to \mathcal{V}_n \oplus  V^{\lambda_n}_{-\lambda_n} (Y)$ satisfies 
 \[
 \lim_{n \to \infty} P_n (v) =v 
 \]
 for any $ v \in \mathcal{V}_{k-1, \delta}  \oplus V (Y)$.
  \end{itemize}
  Then we define a sequence of subspaces 
  \[
  \mathcal{U}_n :=  (L_{W[-\infty, 0]}+ p^{\lambda_n}_{-\lambda_n}\circ r)^{-1} ( \mathcal{V}_n \oplus  V^{\lambda_n}_{-\lambda_n} ). 
  \]
 This gives a family of the approximated Seiberg--Witten maps
  \[
  \{  \mathcal{F}_n : =  P_n (L_{W[-\infty, 0]}+C_{W[-\infty, 0]}, p^{\lambda_n}_{-\lambda_n} \circ r ) \colon   \mathcal{U}_n \to \mathcal{V}_n \oplus  V^{\lambda_n}_{-\lambda_n} (Y) \} . 
  \]
The following proposition gives us a well-defined continuous map between spheres. 
 \begin{prop}\label{cohomotopy1}For a large $n$ and a large positive real number $ R$, 
 there exists an index pair $(N_n,  L_n)$ of $V^{\lambda_n}_{-\lambda_n} (Y) $ and a sequence $\{\varepsilon_n\}$ of positive numbers such that 
 \begin{align}\label{cohomotopy}
 B( \mathcal{U}_n ; R) / S( \mathcal{U}_n ; R)  \to ( \mathcal{V}_n/  B(\mathcal{V}_n, \varepsilon_n)^c) \wedge (N_n/ L_n) 
 \end{align}
 is well-defined, where $B( V ; R)$ is the closed ball in $V$ with radius $R$ and $S( V ; R)$ is the sphere in $V$ with radius $R$.
 \end{prop}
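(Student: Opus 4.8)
The plan is to adapt the construction of the relative Bauer--Furuta invariant for $4$-manifolds with boundary from Manolescu~\cite{Ma03} and Khandhawit~\cite{Kha15}, using the boundedness theorem \cref{bounded} to control the non-compact periodic side of $W[-\infty,0]$. First I would fix $R$ to be larger than the universal constant of \cref{bounded}. By Manolescu's compactness for the approximate flow on $Y$ (\cite[Proposition~3]{Ma03}), the closed ball of radius $R$ in $V^{\lambda_n}_{-\lambda_n}(Y)$ is an isolating neighborhood of the invariant part for the cut-off flow $\varphi^n$ generated by $l + p^{\lambda_n}_{-\lambda_n}c$; I choose an index pair $(N_n, L_n)$ for this invariant set, contained in that ball, so that $N_n/L_n$ represents $I^{\lambda_n}_{-\lambda_n}$. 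The numbers $\varepsilon_n>0$ will be constrained at the end.

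Next I would define the candidate map. For $x \in B(\mathcal{U}_n;R)$ write the approximate Seiberg--Witten map as $\mathcal{F}_n(x) = (v_n(x), w_n(x)) \in \mathcal{V}_n \oplus V^{\lambda_n}_{-\lambda_n}(Y)$, where $w_n(x) = p^{\lambda_n}_{-\lambda_n}(r(x))$ is the projected boundary restriction along $\partial W[-\infty,0] = Y$, and $v_n(x)$ is the interior component $P_n(L_{W[-\infty,0]}+C_{W[-\infty,0]})(x)$. The first coordinate is sent to $\mathcal{V}_n/B(\mathcal{V}_n,\varepsilon_n)^c \cong S^{\mathcal{V}_n}$ by $x \mapsto v_n(x)$, collapsed to the basepoint once $\|v_n(x)\| \geq \varepsilon_n$. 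For the second coordinate I use the standard Conley-index construction: flow $w_n(x)$ forward under $\varphi^n$ and record its position in $N_n/L_n$, sending $x$ to the basepoint as soon as the trajectory leaves $N_n$ or enters $L_n$; this is continuous since $(N_n,L_n)$ is an index pair. Smashing the two coordinates gives a continuous map $\Psi_n : B(\mathcal{U}_n;R) \to S^{\mathcal{V}_n} \wedge (N_n/L_n)$. The content of the proposition is then that, for $n$ large and $\varepsilon_n$ suitably small, $\Psi_n$ sends the boundary sphere $S(\mathcal{U}_n;R)$ to the basepoint, so that it descends to \eqref{cohomotopy}.

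I would prove this last point by contradiction. If it failed for infinitely many $n$, there would be $x_n \in \mathcal{U}_n$ with $\|x_n\| = R$, with $\|v_n(x_n)\| < \varepsilon_n \to 0$, and with the forward $\varphi^n$-trajectory $\gamma_{x_n}$ starting at $w_n(x_n)$ trapped in the isolating neighborhood for all relevant times. Since $\gamma_{x_n}$ stays in a fixed isolating neighborhood, $CSD$ is uniformly bounded along it, hence by \eqref{eq: top en and csd} the topological (equivalently analytic) energy of the associated configuration on $\hat{W}[-\infty,0]$ is uniformly bounded. Combining this energy bound with the convergence of finite-dimensional approximate trajectories to genuine Seiberg--Witten trajectories, the patching and exponential-decay estimates behind \cref{exp}, and the global slice theorem \cref{global} (to put the gauge in the double Coulomb slice), one extracts a limit: a genuine finite-type $W[-\infty,0]$-trajectory $(x_\infty, y_\infty)$ with $x_\infty$ a solution of the Seiberg--Witten equations on $W[-\infty,0]$ lying in $\mathcal{U}_{k,\delta}$, with $y_\infty$ a half-trajectory on $\R^{\geq 0}\times Y$ matching $x_\infty$ on $Y$ and asymptotic to a critical point of $CSD$, and with $\|x_\infty\|_{L^2_{k,\delta}} = \lim_n \|x_n\| = R$. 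Such $(x_\infty, y_\infty)$ satisfies hypotheses (i)--(v) of \cref{bounded}, so \cref{bounded} forces $\|x_\infty\|_{L^2_{k,\delta}} < R$, a contradiction. Hence for $n$ large $\Psi_n$ is basepoint-preserving on $S(\mathcal{U}_n;R)$, which yields \eqref{cohomotopy}.

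The main difficulty will be the compactness argument of the last paragraph: producing, from approximate solutions whose boundary data are trapped in the isolating neighborhood, a genuine finite-type trajectory on the non-compact manifold $W[-\infty,0]$. One must simultaneously control the periodic PSC end through the weighted Fredholm theory of \cref{section: Fredholm theories} and the exponential-decay input of \cref{section: The boundedness result}, control the cylindrical $Y$-end through Manolescu's compactness, and keep the energy estimate uniform as both $n$ and the flow time tend to infinity while the boundary trajectory remains in the isolating neighborhood.
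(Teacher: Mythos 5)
Your proposal is correct and follows essentially the same route as the paper, which is simply to combine \cref{fin app conv}, \cref{bounded}, and the argument of Khandhawit's Proposition~4.5: the paper states this in a single sentence, and your write-up spells out exactly how those three ingredients fit together. In particular the "main difficulty" you identify at the end — extracting a genuine finite-type $W[-\infty,0]$-trajectory (with strong $L^2_{k,\delta}$-convergence of the $4$-dimensional piece, so that the norm $R$ is preserved in the limit) from approximate solutions trapped in the isolating neighborhood — is precisely the content of \cref{fin app conv}, and citing it directly at that point would close the argument.
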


 For the proof of \cref{cohomotopy1}, we use the following proposition. 
  \begin{prop}\label{fin app conv}
Let $\{x_n\}$ be a bounded sequence in $\mathcal{U}_{k,  \delta}$ such that 
\[
(L_{W[-\infty, 0]}(x_n), p^{\lambda_n }_{-\infty} \circ r (x_n ) )\in \mathcal{V}_n \times V^{\lambda_n}_{-\lambda_n} 
\]
and 
\[
P_n (L_{W[-\infty, 0]}+C_{W[-\infty, 0]}) x_n \to 0 . 
\]
Let $y_n: [0, \infty) \to V^{\lambda_n}_{-\lambda_n}$ be a uniformly bounded sequence of trajectories such that 
\[
y_n(0) = p^{\lambda_n}_{-\infty} \circ r (x_n) . 
\]
Then, after taking a subsequence, $\{x_n\}$ converges to a solution $x \in \mathcal{U}_{k,  \delta}$ (in the topology of $\mathcal{U}_{k,  \delta}$) and $\{y_n(t)\}$ converges to $y(t) (\forall t\in [0,\infty))$ in $L^2_{k-\frac{1}{2}}$ which is a solution of the Seiberg--Witten equations on $\R^{\geq 0}\times Y$. 
\end{prop}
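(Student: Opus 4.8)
The plan is to adapt the finite-dimensional-approximation compactness arguments of Manolescu~\cite{Ma03} and Khandhawit~\cite{Kha15} to our periodic-end setting, feeding in the Fredholm theory for $W[-\infty,0]$ from \cref{prop:compari op} and the weighted elliptic estimates already used in the proof of \cref{bounded}. Concretely, I would run two compactness mechanisms side by side---one for the half-trajectories $y_n$ on the cylindrical end $\R^{\geq 0}\times Y$, one for the configurations $x_n$ on the periodic-end $4$-manifold---and then match the two limits along the common boundary $Y$.

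First I would treat the cylinder. The $y_n$ are uniformly bounded finite-type half-trajectories of the approximated flows on finite-dimensional subspaces of $V(Y)$, so Manolescu's compactness result~\cite[Proposition~3]{Ma03}, in its finite-dimensional-approximation form, produces---after passing to a subsequence---a limiting trajectory $y\colon[0,\infty)\to V(Y)$ of the genuine Seiberg--Witten flow on $\R^{\geq 0}\times Y$, with $y_n\to y$ in $C^\infty_{\mathrm{loc}}([0,\infty))$; in particular $y_n(t)\to y(t)$ in $L^2_{k-\frac12}$ for every $t$, and $y_n(0)\to y(0)$.

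Next I would extract a limit of the $x_n$. Since the quadratic term $C_{W[-\infty,0]}$ is compact for our fixed $\delta>0$ (as recalled after \eqref{SW eq}, following \cite[Proposition~2.13]{Lin19}) and $\{x_n\}$ is bounded in $L^2_{k,\delta}$, I would pass to a further subsequence with $C_{W[-\infty,0]}(x_n)\to\zeta$ in $L^2_{k-1,\delta}$. Using that $L_{W[-\infty,0]}(x_n)$ already lies in $\mathcal{V}_n$, that $P_n\to\mathrm{id}$ pointwise (property~(ii) preceding \cref{cohomotopy1}), and the hypothesis $P_n(L_{W[-\infty,0]}+C_{W[-\infty,0]})(x_n)\to0$, I would deduce $L_{W[-\infty,0]}(x_n)\to-\zeta$ in $L^2_{k-1,\delta}$. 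On the boundary, since $y_n(0)=p^{\lambda_n}_{-\infty}\circ r(x_n)$ and $p^0_{-\infty}\circ p^{\lambda_n}_{-\infty}=p^0_{-\infty}$ once $\lambda_n>0$, the first step yields $p^0_{-\infty}\circ r(x_n)=p^0_{-\infty}(y_n(0))\to p^0_{-\infty}(y(0))$ in $V^0_{-\infty}$. Hence the image of $x_n$ under the Fredholm operator $L_{W[-\infty,0]}\oplus(p^0_{-\infty}\circ r)\colon\mathcal{U}_{k,\delta}\to L^2_{k-1,\delta}(i\Lambda^+\oplus S^-)\oplus V^0_{-\infty}$ of \cref{prop:compari op} converges. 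I would then invoke a bounded parametrix for this operator, together with the weighted Fredholm estimate and the compactness of the relevant weighted Sobolev embedding---exactly as in the proof of \cref{bounded}, where the interior estimate on the periodic end is controlled through the exponential decay furnished by \cref{allops} and \cref{fredd}---to conclude that a subsequence of $\{x_n\}$ converges in the $L^2_{k,\delta}$-topology to some $x$. Passing to the limit then shows that $x+(A_0,\Phi_0)$ solves \eqref{SW eq} on $W[-\infty,0]$, that the matching condition relating $y(0)$ to $r(x)$ is preserved, and that $y$ solves the Seiberg--Witten equations on $\R^{\geq 0}\times Y$ by the first step.

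The main obstacle, as usual for Bauer--Furuta-type compactness, is the coupling of the two mechanisms through the common boundary $Y$ together with the noncompactness of $W[-\infty,0]$: the elliptic bootstrap and the required compact Sobolev embedding are not automatic on the periodic end and rely crucially on the exponential-decay estimates, which in turn depend on the invertibility on $\wt{X}$ of the relevant elliptic operators guaranteed by the positive-scalar-curvature (admissible) metric (\cref{allops}, \cref{fredd}, \cref{cohomology}). An alternative organization would be to first paste $x_n$ and $y_n$ into a single $W[-\infty,0]$-trajectory on $\hat{W}[-\infty,0]$, as in the pasting lemma used in the proof of \cref{bounded}, and then run a single compactness argument there; I expect this to be essentially equivalent.
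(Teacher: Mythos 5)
Your proposal follows essentially the same route as the paper's proof: compactness of the half-trajectories on the cylinder via Manolescu/Khandhawit, then compactness of $\{x_n\}$ on $W[-\infty,0]$ via the compactness of $C_{W[-\infty,0]}$, the convergence $L_{W[-\infty,0]}(x_n)\to-\zeta$ (using $P_n\to\mathrm{id}$ and $L_{W[-\infty,0]}(x_n)\in\mathcal{V}_n$), and the Fredholmness of $L_{W[-\infty,0]}\oplus(p^0_{-\infty}\circ r)$, matching the boundary values at the end. This is exactly the \cite{IT20}-style argument the paper invokes, with the key elliptic estimate
\[
\|x\|_{L^2_{k,\delta}} \leq C\bigl(\|L_{W[-\infty,0]}(x)\|_{L^2_{k-1,\delta}} + \|p^0_{-\infty}r(x)\|_{L^2_{k-\frac12}} + \|x\|_{L^2}\bigr).
\]

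One point worth tightening: you appeal to ``compactness of the relevant weighted Sobolev embedding,'' but no such compact embedding exists on the noncompact manifold $W[-\infty,0]$ (a bounded sequence in $L^2_{k,\delta}$ can escape down the periodic end). The compactness you actually need is supplied by the Fredholm structure itself: decompose $\mathcal{U}_{k,\delta}=\ker(L_{W[-\infty,0]}\oplus p^0_{-\infty}\circ r)\oplus W'$, note that the complement piece of $x_n$ converges because the restricted operator is an isomorphism onto its closed range, and the kernel piece lies in a finite-dimensional space and so has a convergent subsequence. Equivalently, in your parametrix formulation $x_n=P\bigl(Tx_n\bigr)-Kx_n$, the remainder $K$ is compact because $T$ is Fredholm on $W[-\infty,0]$ (\cref{prop:compari op}), not because of a Rellich-type embedding. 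With this correction your argument is the paper's argument.
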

\begin{proof}
The proof is similar to the proof of \cite[Proposition 3]{Kha15}. By the same argument, one sees the following result: for any compact set $I \subset (0, \infty)$, after taking a subsequence, $y_n(t)$ uniformly converges to $y(t)$ in $L^2_{k-\frac{1}{2}}$, where $y(t)$ is the weak limit.

For the sequence $\{x_n\}$, we need to ensure: 
\begin{itemize}
\item after taking a subsequence, $p^0_{-\infty} y_n(0) \to p^0_{-\infty} r (x)$ in $L^2_{k-\frac{1}{2}}$, where $x$ is the weak limit and 
\item after taking a subsequence, the sequence $\{x_n\} $ converges to $x$ in $L^2_{k, \delta} (X)$. 
\end{itemize} 
 The proof of the second statement is the only difference between our construction and the usual Bauer--Furuta invariant. Here we again follow the method given in \cite{IT20}. 
 To obtain the convergence of $ \{x_n\}$, we will use the following inequality obtained by the Fredholm property of $L_{W[-\infty, 0]}$: 
 there exists a constant $C>0$ such that, for any $x \in \mathcal{U}_{k, \delta}$, 
 \[
 \| x\|_{L^2_{k, \delta} } \leq  C ( \|  L_{W[-\infty, 0]} (x) \| _{ L^2_{k-1 , \delta}  } + \| p^0_{-\infty} r (x) \|_{L^2_{k-\frac{1}{2}} }  + \| x\|_{L^2})  .
 \]
 Then, by the same discussion given in the proof of \cite[Lemma 3.18]{IT20}, we complete the proof. 
 \end{proof} 
 
\begin{proof}[Proof of \cref{cohomotopy1}]
We combine \cref{fin app conv}, \cref{bounded} and the proof of \cite[Proposition 4.5]{Kha15} and complete the proof. 
\end{proof}
 
By \cref{cohomotopy1}, we obtain a family of the continuous maps \eqref{cohomotopy}. By the definition of Fredholm index,  we have 
 \[
\ind_\R(L_{W[-\infty, 0]}\oplus p^{\lambda_n}_{-\infty} \circ r)= \dim_\R \mathcal{U}_n - \dim_\R \mathcal{V}_n - \dim_\R V^{\lambda_n}_{-\lambda_n} . 
 \]

We obtain a map stably written by 
\[
\Psi  :  (\wt{\R}^m \oplus \quat^n )^+  \to  (\wt{\R}^{m'} \oplus \quat^{n'})^+ \wedge \Sigma^{-V^0_{-\lambda_n}  }(N_n/ L_n), 
\]
here we fixed trivializations of vector spaces. 
\begin{rem}
Our construction gives an invariant of 4-manifolds with periodic end admitting periodic PSC metric on the end. 
This can be regarded as relative Bauer--Furuta invariant corresponding to \cite{Ve14}.  
\end{rem}

\section{The proof of \cref{real main}} \label{section: The proof of main}
\label{section: proof of main}

In this \lcnamecref{section: proof of main}, we prove \cref{real main}.
Recalling the definition of local equivalence \cite{Sto20}, what we have to do is to construct a certain type of map called {\it local map} from $\SWF(Y,\frakt)$ to $\left[ \left( S^{0},0,-\lambda_{SW} (X, \s)/2\right) \right]$, and also a local map from $\left[ \left( S^{0},0,-\lambda_{SW} (X, \s)/2\right) \right]$ to $\SWF(Y,\frakt)$.

We shall consider the relative Bauer--Furuta invariant on the `left-periodic' manifold $W[-\infty, 0]$ and that on `right-periodic' manifold $W[0,\infty]$.
These two relative Bauer--Furuta invariants give the desired two local maps.

\begin{proof}[Proof of \cref{real main}]
In \cref{section: rel BF inv},
under the assumption of the existence of PSC metric on $X$, we constructed a $\Pin(2)$-equivariant continuous map of the form
\begin{align}
\label{eq: rel BF basic}
f : (\tilR^{m_{0}} \oplus \quat^{n_{0}})^{+} \to (\tilR^{m_{1}} \oplus \quat^{n_{1}})^{+} \wedge I_{-\lambda}^{\lambda}
\end{align}
as the relative Bauer--Furuta invariant over $W[-\infty, 0]$.
One sees that $f^{S^{1}}$ induces a $\Pin(2)$-homotopy equivalence by \cref{cohomology}. The numbers $m_{0}-m_{1}, n_{0}-n_{1}$ are given by
\begin{align}
\begin{split}
\label{eq: m0m1n0n1}
m_{0} -m_{1} &= \dim V^{0}_{-\lambda}(\R),\\
2(n_{0} -n_{1}) &= \ind_{\C}D^+_{W[-\infty,0]} + \dim_{\C} V^{0}_{-\lambda}(\quat)\\
&= \lambda_{SW}(X, \fraks) + n(Y,\frakt,g) + \dim_{\C} V^{0}_{-\lambda}(\quat). 
\end{split}
\end{align}
For the notations $V^{0}_{-\lambda}(\R)$ and $ V^{0}_{-\lambda}(\quat)$, see \eqref{decom}.
Here we have used \cref{prop: excision} to get the second equality of \eqref{eq: m0m1n0n1} and \cref{cohomology} to get the first equality. 

Equations \eqref{eq: rel BF basic} and \eqref{eq: m0m1n0n1} mean that the map $f$ gives a local map from $\left[ \left( S^{0},0,-\lambda_{SW} (X, \s)/2\right) \right]$ to $\SWF(Y,\frakt)$.

Next, instead of the `left-periodic' manifold $W[-\infty, 0]$,
we consider the `right-periodic' manifold
\[
W[0, \infty] = W \cup_{Y} W\cup_{Y} W \cup_{Y} \cdots.
\]
Repeating analysis in \cref{section: rel BF inv} for $W[0, \infty]$ instead of $W[-\infty, 0]$,
we obtain a $\Pin(2)$-map of the form
\begin{align}
\label{eq: rel BF basic'}
f' : (\tilR^{m'_{0}} \oplus \quat^{n'_{0}})^{+} \to (\tilR^{m'_{1}} \oplus \quat^{n'_{1}})^{+} \wedge \bar{I}_{-\lambda}^{\lambda}
\end{align}
as the relative Bauer--Furuta invariant over $W[0,\infty]$.
Here $\bar{I}_{-\lambda}^{\lambda}$ denotes the Conley index for $-Y$.
As well as $f$ above, $(f')^{S^{1}}$ induces a $\Pin(2)$-homotopy equivalence by \cref{cohomology1}.
For $\mu \leq 0 \leq \lambda$,
as in \cite[Proof of Proposition~3.8]{Ma16},
let us denote by $\bar{V}^{\lambda}_{\mu}$ the vector space $V^{\lambda}_{\mu}$ defined for $-Y$.
Note that, for $\mu < 0 < \lambda$, we have an identification $\bar{V}^{\lambda}_{\mu} \cong V^{-\mu}_{-\lambda}$, and in particular
$\bar{V}^{\lambda}_{-\lambda} \cong V^{\lambda}_{-\lambda}$.
Under this notation, $m'_{0}-m'_{1}, n'_{0}-n'_{1}$ are given by
\begin{align}
\begin{split}
\label{eq: m0m1'eq: n0n1'}
m'_{0} -m'_{1} &= \dim \bar{V}^{0}_{-\lambda}(\R),\\
2(n'_{0} -n'_{1}) &= \ind_{\C}D^+_{W[0,\infty]} + \dim_{\C} \bar{V}^{0}_{-\lambda}(\quat).
\end{split}
\end{align}
By an argument using a duality map as in \cite[page 168]{Ma16},
we obtain a $\Pin(2)$-map
\begin{align*}
f'' : (\tilR^{m'_{0}} \oplus \quat^{n'_{0}})^{+} \wedge I_{-\lambda}^{\lambda} \to (\tilR^{m'_{1}} \oplus \quat^{n'_{1}})^{+} \wedge (V^{\lambda}_{-\lambda})^{+}
\end{align*}
from \eqref{eq: rel BF basic'}.  
The vector space $V^{\lambda}_{-\lambda}$ can be decomposed so that $V^{\lambda}_{-\lambda}(\R) \oplus V^{\lambda}_{-\lambda}(\quat)$.
Set 
\begin{align}
\begin{split}
\label{eq: prf main double pr}
&m_{1}'' = m_{1}' + \dim V^{\lambda}_{-\lambda}(\R),\\
&n_{1}'' = n_{1}' + \dim_{\quat} V^{\lambda}_{-\lambda}(\quat).
\end{split}
\end{align}
Then the domain and codomain of $f''$ are given by
\begin{align}
\label{eq: rel BF basic''}
f'' : (\tilR^{m'_{0}} \oplus \quat^{n'_{0}})^{+} \wedge I_{-\lambda}^{\lambda} \to (\tilR^{m''_{1}} \oplus \quat^{n''_{1}})^{+}.
\end{align}
We shall show that $f''$ gives a local map from $\SWF(Y,\frakt)$ to $\left[ \left( S^{0},0,-\lambda_{SW} (X, \s)/2\right) \right]$.
The restriction $(f'')^{S^1}$ is a $Pin(2)$-homotopy equivalence since $f'$ is so. 
One may assume $\lambda$ was taken to avoid the eigenvalues of the linearization $l$ of the flow equations.
Then we have 
\begin{align}
\begin{split}
\label{eq: zero eig proof of main}
&\bar{V}_{-\lambda}^{0}(\R) = V_{0}^{\lambda}(\R),\\
& \bar{V}_{-\lambda}^{0}(\quat) = V_{0}^{\lambda}(\quat) \oplus \ker \D.
\end{split}
\end{align}
Here, to obtain the first equality, we have used $\Ker (\ast d : \ker d^{\ast} \to \Omega^{1}(Y))=0$ deduced from the assumption that $b_{1}(Y)=0$.
Using \eqref{eq: zero eig proof of main}, we have 
\begin{align}
\begin{split}
\label{eq: Vr proof of main}
V^{\lambda}_{-\lambda}(\R) 
&\cong  V^{0}_{-\lambda}(\R) \oplus V^{0}_{-\lambda}(\R)\\
&\cong  V^{0}_{-\lambda}(\R) \oplus \bar{V}_{-\lambda}^{0}(\R)
\end{split}
\end{align}
and 
\begin{align}
\begin{split}
\label{eq: Vh proof of main}
V^{\lambda}_{-\lambda}(\quat) \oplus \ker \D
&\cong  V^{0}_{-\lambda}(\quat) \oplus V^{0}_{-\lambda}(\quat) \oplus \ker \D\\
&\cong  V^{0}_{-\lambda}(\quat) \oplus \bar{V}_{-\lambda}^{0}(\quat).
\end{split}
\end{align}
Combining \eqref{eq: m0m1'eq: n0n1'} with \eqref{eq: prf main double pr}, \eqref{eq: Vr proof of main} and \eqref{eq: Vh proof of main}, we obtain
\begin{align}
\begin{split}
\label{eq: m0'-m1''}
m_{0}'-m_{1}''
&= m_{0}'-m_{1}' - \dim \bar{V}^{\lambda}_{-\lambda}(\R) \\
&= \dim \bar{V}^{0}_{-\lambda}(\R) - \dim V^{\lambda}_{-\lambda}(\R) = -\dim V^{0}_{-\lambda}(\R),
\end{split}
\end{align}
\begin{align}
\begin{split}
\label{eq: n0'-n1''}
n_{0}'-n_{1}''
&= n_{0}'-n_{1}' - \dim \bar{V}^{\lambda}_{-\lambda}(\quat) \\
&= \dim \bar{V}^{0}_{-\lambda}(\quat) - \dim V^{\lambda}_{-\lambda}(\quat)\\
&= \ind_{\quat}D^+_{W[0,\infty]} 
+ \dim \ker \D - \dim_{\quat} V^{0}_{-\lambda}(\quat).
\end{split}
\end{align}

Let us calculate $\ind_{\quat}D^+_{W[0,\infty]}$ in the last equality.
Let $M'$ be an oriented compact smooth $4$-manifold with boundary $\del M' = Y$.
Set
\[
M'_{\infty} =  M' \cup_{Y} W\cup_{Y} W \cup_{Y} \cdots.
\]
Then, as well as \cref{prop: excision2}, we obtain
\begin{align}
\label{eq: excision2}
\ind_{\C}D^+_{W[0,\infty]}  + \ind_{\C}D^+_ {M'} + \dim_{\C} \Ker\D = \ind_{\C}D^+_ {M'_{\infty}},
\end{align}
On the other hand, for a PSC metric $g$ on $X$, we have
\begin{align}
\label{Jimila3'}
  \lambda_{SW}(X,\fraks) 
 = -w(X,g,0) 
 = -\ind_{\C} D^+_{M'_{\infty}} - \frac{\sigma(M')}{8}.
\end{align}
Recalling the definition of $n(Y,\frakt,g)$, we have
\begin{align}
\label{def n(Y)}
n(Y,\frakt,g)
 = \ind_{\C} D^+_{M'} + \frac{\sigma(M')}{8}.
\end{align}
Combining \eqref{eq: excision2} with \eqref{Jimila3'} and \eqref{def n(Y)}, we have
\begin{align}
\label{eq: ind W0inf}
\ind_{\quat}D^+_{W[0,\infty]}
= -\frac{1}{2}(\lambda_{SW}(X,\fraks) + n(Y,\frakt,g)) -\dim_{\quat}\ker\D.
\end{align}

It follows from \eqref{eq: n0'-n1''} and \eqref{eq: ind W0inf} that
\begin{align}
\begin{split}
\label{eq: n0'-n1''2}
n_{0}'-n_{1}''
= -\frac{1}{2}(\lambda_{SW}(X,\fraks) + n(Y,\frakt,g)) - \dim_{\quat} V^{0}_{-\lambda}(\quat).
\end{split}
\end{align}
Now we deduce from \eqref{eq: rel BF basic''}, \eqref{eq: m0'-m1''}, and \eqref{eq: n0'-n1''2} that
$f''$ gives a local map from $\SWF(Y,\frakt)$ to $\left[ \left( S^{0},0,-\lambda_{SW} (X, \s)/2\right) \right]$.
\end{proof}

\section{Obstruction to embeddings of 3-manifolds into 4-manifolds with PSC metric}
\label{section: Obstruction to embedding of 3-manifolds into 4-manifolds with PSC metric}

\cref{real main} gives an obstruction to embedding of 3-manifolds into 4-manifolds with PSC metric under a homological assumption. By a standard surgery argument enables us to prove the following generalization of \cref{real main}. 
\begin{theo}\label{emb}
Let $(X, \s)$ be an oriented spin closed connected 4-manifold with $b_2(X)=0$ and $Y$ a smooth oriented closed codimension-1 submanifold of $X$. Suppose $b_1(Y)=0$ and $X$ admits a PSC metric. Then the local equivalence class of $\SWF(Y,\frakt)$ is given by
\begin{align}
[\SWF(Y,\frakt)] 
= \left[ \left( S^{0},0,-\frac{\delta (Y , \mathfrak{t}) }{2}\right) \right], 
\end{align}
where $\mathfrak{t}:= \mathfrak{s}|_Y$. 
\end{theo}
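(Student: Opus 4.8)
The plan is to deduce \cref{emb} from \cref{real main} by a surgery reduction. Starting from $(X,\fraks)$ and $Y$, I would perform a finite sequence of surgeries supported in $X\setminus Y$: each is of codimension $\geq 3$, so it preserves the existence of a PSC metric by the surgery theorem of Gromov--Lawson and Schoen--Yau, and with the correct choice of normal framing it preserves the spin structure; since all surgeries avoid $Y$, the pair $(Y,\frakt=\fraks|_Y)$ is never touched. Concretely: if $[Y]=0\in H_3(X;\Z)$ (equivalently, $Y$ separates), first surger a $0$-sphere consisting of one point on each side of $Y$, which replaces $X$ by $X\#(S^1\times S^3)$, makes $Y$ non-separating, and keeps $b_2=0$; then, as long as $b_1(X)>1$, surger an embedded circle in $X\setminus Y$ carrying a primitive infinite-order class of $H_1(X;\Z)$ orthogonal to the Poincar\'e dual of $[Y]$ (such classes are represented in $X\setminus Y$, because removing a non-separating hypersurface only kills the ``$S^1$-direction'' of $H_1$). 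Surgery on such a circle drops $b_1$ by one while preserving $b_2=0$ and $[Y]\neq 0$. After finitely many steps one obtains a closed connected spin $4$-manifold, again called $(X,\fraks)$, with $b_1=1$ and $b_2=0$ --- a rational homology $S^1\times S^3$ --- containing $Y$ as a non-separating rational homology $3$-sphere.

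Cutting this $X$ along $Y$ yields a cobordism $W_0$ from $Y$ to $Y$ which, by a Mayer--Vietoris/Wang-type computation using $b_1(X)=1$, $b_2(X)=0$ and $b_1(Y)=0$, satisfies $H_*(W_0;\Q)\cong H_*(Y\times[0,1];\Q)$; that is, $W_0$ is a rational homology cobordism from $Y$ to itself. Note that $[Y]$ need not be a generator of $H_3(X;\Z)$, so I argue directly with $W_0$ rather than insisting that $Y$ be a cross-section in the literal sense: the construction of the periodic-end relative Bauer--Furuta invariant in \cref{section: rel BF inv} and the proof in \cref{section: proof of main} use only that $W_0$ is a rational homology cobordism, that the periodic end carries a periodic PSC metric, and that $b_1(Y)=0$. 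The same argument then produces $\Pin(2)$-local maps in both directions between $\SWF(Y,\frakt)$ and $\bigl(S^0,0,-c/2\bigr)$, where $c:=\ind_{\C}D^+_{W[-\infty,0]}-n(Y,\frakt,g)$ is the rational number supplied by the excision computation of \cref{prop: excision}; hence $[\SWF(Y,\frakt)]=\bigl[\bigl(S^0,0,-c/2\bigr)\bigr]$.

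To finish, since the Fr{\o}yshov invariant $\delta$ factors through the local equivalence group $\LE$ and $\delta\bigl(\bigl(S^0,0,-c/2\bigr)\bigr)=c$, we get $c=\delta(Y,\frakt)$, which is exactly \eqref{eq: main theo determination} with $\lambda_{SW}$ replaced by $\delta(Y,\frakt)$, proving \cref{emb}.

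The main obstacle I expect is the surgery bookkeeping of the first paragraph: one must simultaneously keep the spin structure (making the correct choice among the two framings of each surgered circle), keep all surgeries disjoint from $Y$ (which forces the surgered homology classes to be carried by $X\setminus Y$), and land on exactly $b_1=1$, $b_2=0$. A secondary technical point is to verify that the proof of \cref{real main} genuinely uses only the rational homology cobordism property of $W_0$ and not the stronger cross-section hypothesis --- this is the case because the generator assumption enters the earlier sections only through the set-up of the weight function $\tau$ and the index identities, and these go through for any $W_0$ obtained by cutting $X$ along a non-separating rational homology $3$-sphere.
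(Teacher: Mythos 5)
Your argument is correct and follows the same overall strategy as the paper's: reduce by codimension-$3$ surgeries in $X\setminus Y$, invoking Gromov--Lawson, to the rational homology $S^1\times S^3$ case, and then apply \cref{real main} together with the identity $\lambda_{SW}(X,\fraks)=\delta(Y,\frakt)$ (which, as in \cref{main cor}, holds because $\delta$ factors through $\LE$). The only substantive difference is in the separating case $[Y]=0$: you perform an $S^0$-surgery across $Y$, turning $X$ into $X\#(S^1\times S^3)$ and making $Y$ non-separating, and then feed everything through the periodic-end machinery; the paper instead treats this case directly by surgering the two pieces $W_0^\pm$ of the cut-open manifold into spin rational homology $4$-balls and noting that the ordinary (compact) relative Bauer--Furuta invariants of $W_0^\pm$ already give the local equivalence $[\SWF(Y,\frakt)]=[(S^0,0,0)]$. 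Your route is more uniform; the paper's avoids the heavier periodic-end analysis in this case. One small superfluity in your write-up: once you have landed on a spin rational homology $S^1\times S^3$ containing the connected, $2$-sided, non-separating hypersurface $Y$, the class $[Y]$ is automatically a generator of $H_3(X;\Z)\cong\Z$ (choose a loop meeting $Y$ transversely in one point, using $2$-sidedness and non-separation; this shows the intersection pairing $H_1(X;\Z)\to\Z$, $\gamma\mapsto\gamma\cdot Y$, is onto). Hence $Y$ \emph{is} a cross-section in the sense of \cref{real main}, and the caveat about the proof of \cref{real main} only needing a rational homology cobordism, while true, is not needed here.
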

This theorem can be seen as a Seiberg--Witten analogue of the result proven by Yang--Mills gauge theory \cite[Theorem 1.9]{T19}.  Using the Heegaard Floer correction term, Levine--Ruberman \cite{LR19} gave an obstruction of codimension-1 smooth embeddings into homology $S^1\times S^3$'s. For the obstructions to codimension-1 smooth embeddings into indefinite spin 4-maniolds, see \cite{PMK17}.

\begin{proof}
We argue the case that $[Y]\neq 0$ and that $[Y] = 0$ individually. First, let us assume $[Y]\neq 0$. In this case, the cobordism $W_0 := \overline{X \setminus Y}$ from $Y$ to itself is connected. When $b_2(W_0)=0$, one can see $X$ is a rational homology $S^1\times S^3$ and $ [Y]$ generates $H_3(X)$. Thus, by \cref{real main}, one has 
\[
[\SWF(Y,\frakt)] = \left[ \left( S^{0},0,-\frac{\lambda_{SW} (X, \s)}{2}\right) \right]=\left[ \left( S^{0},0,-\frac{\delta (Y , \mathfrak{t}) }{2}\right) \right] .
\]
When $b_2(W_0)>1$, we take disjoint simple closed curves $l_1, \cdots , l_{b_2(W_0)}$ in $X$ which generate $H_2(W_0; \Z)$. 
We extend $l_1, \cdots , l_{b_2(W_0)}$ to disjoint smooth embeddings from $S^1\times D^3$'s into $W_0$ and denote them by the same notations. 
We consider the manifold 
\[
W_0({l_1, \cdots , l_{b_2(W_0)}} )
\]
obtained by the surgery of $W_0$ along $l_1\cup  \cdots \cup  l_{b_2(W_0)}$. One can see $W_0({l_1, \cdots , l_{b_2(W_0)}} ) $ also admits a spin structure. We write the glued manifold along the boundary of $W_0({l_1, \cdots , l_{b_2(W_0)}} ) $ by $X({l_1, \cdots , l_{b_2(W_0)}} )$.

Since we are considering codimension-3 surgeries, \cite[Theorem A]{GL80} implies that $X({l_1, \cdots , l_{b_2(W_0)}} )$ also admits a PSC metric. 
 The manifold $X({l_1, \cdots , l_{b_2(W_0)}} )$ is a spin rational homology $S^1\times S^3$. By construction, $Y$ is smoothly embedded into $X({l_1, \cdots , l_{b_2(W_0)}} )$ such that 
 \[
0  \neq [Y] \in H_3(X({l_1, \cdots , l_{b_2(W_0)}} ); \Z)\cong \Z .
 \]
 An easy observation shows that $[Y]$ generates $H_3(X({l_1, \cdots , l_{b_2(W_0)}} ); \Z)$. Thus one can use \cref{real main} and see 
 \[
[\SWF(Y,\frakt)] = \left[ \left( S^{0},0,-\frac{\lambda_{SW} (X({l_1, \cdots , l_{b_2(W_0)}} ) )}{2}\right) \right]=\left[ \left( S^{0},0,-\frac{\delta (Y , \mathfrak{t}) }{2}\right) \right] .
\]

Next, we consider the case $[Y]=0$. 
In this case, our cobordism $W_0$ should have two connected components: $W_{0}^+ \cup W_0^-$. Suppose $\partial W_{0}^+ =Y$ and $\partial W_0^- = -Y$. By 1-handle surgery, one can assume that $W_{0}^+$ and  $W_0^-$ are spin rational homology $D^4$'s.
Thus the relative Bauer--Furuta invariants $BF_{W_{0}^+}$ and $BF_{W_{0}^-}$ gives rise to the local equivalence between $SWF(Y, \mathfrak{t})$ and $\left[ \left( S^{0},0, 0 \right) \right]$. 
This completes the proof. 
\end{proof}

\begin{cor}\label{emb1}
Let $Y$ be an integral homology $3$-sphere. Suppose that at least two of $\alpha(Y), \beta(Y), \gamma(Y), \delta(Y),\overline{\delta}(Y), \underline{\delta}(Y), \kappa(Y)$ do not coincide with each other. 
Then $Y$ does not admit any smooth embedding into a spin closed 4-manifold with a PSC metric satisfying $b_2(X)=0$. 
 \end{cor}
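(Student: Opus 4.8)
The plan is to obtain \cref{emb1} as an immediate consequence of \cref{emb}, combined with the elementary evaluation of the seven Fr{\o}yshov-type invariants on the model spaces $\left(S^0,0,n\right)$ that was already carried out in the proof of \cref{main cor}. The argument will be purely formal: no new analysis is required once \cref{emb} is in hand.

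First I would argue by contradiction. Suppose $Y$ admits a smooth embedding into a spin closed $4$-manifold $X$ with $b_2(X)=0$ carrying a PSC metric. Choosing an orientation on $X$ and passing to the connected component containing $Y$ (which is again spin, closed, connected, has vanishing $b_2$, and inherits a PSC metric), we may assume $X$ is oriented and connected. Since $Y$ is an integral homology $3$-sphere, $b_1(Y)=0$ and $Y$ carries a unique spin structure $\frakt$, which necessarily coincides with $\fraks|_Y$. Thus the hypotheses of \cref{emb} are satisfied, and we conclude
\[
[\SWF(Y,\frakt)] = \left[\left(S^0,0,-\frac{\delta(Y,\frakt)}{2}\right)\right]
\]
in $\LE$.

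Next I would invoke the fact that each of $\alpha,\beta,\gamma,\delta,\overline{\delta},\underline{\delta},\kappa$ factors through $\LE$, so it suffices to compute these invariants on the triple $\left(S^0,0,-\delta(Y,\frakt)/2\right)$. Exactly as in the proof of \cref{main cor}, where the identical computation is performed for $\left(S^0,0,-\lambda_{SW}(X,\fraks)/2\right)$, each of the seven invariants of $\left(S^0,0,n\right)$ equals $-2n$; hence here all seven equal $\delta(Y,\frakt)$. Therefore
\[
\alpha(Y)=\beta(Y)=\gamma(Y)=\delta(Y)=\overline{\delta}(Y)=\underline{\delta}(Y)=\kappa(Y),
\]
contradicting the assumption that at least two of them are distinct. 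This proves the corollary.

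Since the argument is essentially formal, there is no genuine analytic obstacle; the one point that deserves a word of care is the normalization bookkeeping, i.e.\ verifying that the common value of the seven invariants on $\left(S^0,0,n\right)$ is $-2n$ (so that the common value in our situation is $\delta(Y,\frakt)$). This is immediate from the definitions in \cite{Ma16,Ma14,Sto171} and is the same computation already invoked in \cref{main cor}, so I would simply refer to that rather than repeat it.
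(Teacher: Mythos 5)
Your argument is correct and is exactly what the paper leaves implicit (the corollary is stated without proof, as an immediate consequence of \cref{emb}): pass to the connected component of $X$ containing $Y$ to satisfy the hypotheses of \cref{emb}, obtain $[\SWF(Y,\frakt)]=[(S^0,0,-\delta(Y,\frakt)/2)]$, evaluate all seven $\LE$-factoring invariants on the model triple as in the proof of \cref{main cor} to see they all equal $\delta(Y,\frakt)$, and derive the contradiction. No issues.
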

 
 \begin{rem}
 Freedman's result (\cite{F82}) implies that all homology 3-spheres have a locally flat embedding into $S^4$, and \cref{emb1} is false for locally flat topological embeddings. 
 \end{rem}

\section{Examples}
\label{section: Examples}

In this section we use \cref{cor: glued} to obtain a concrete family of 4-manifolds which does not admit PSC metrics. 
In order to use \cref{cor: glued}, we need to calculate the homology cobordism invariants $\alpha$, $\beta$, $\gamma$, $\delta$. 
The following remark gives a method to calculate $\delta$ for a large class of 3-manifolds: 
\begin{rem}In \cite[Remark 1.1]{LRS18}, it is mentioned that Heegaad Floer correction term $d(Y, \s)$ and the monopole Fr\o yshov invariant $h(Y,\s) $ satisfy
\[
d(Y, \s) = -2 h(Y,\s) , 
\]
for any spin$^c$ rational homology 3-sphere $(Y,\s)$. 
Moreover, it is proved in \cite{LM18} that
\begin{align*}
-h ( Y, \mathfrak{s}  ) =  \delta (Y, \mathfrak{s}).
\end{align*}
Therefore one can use calculations of correction terms in Heegaard Floer theory (\cite{OS03, BN13, Tw13, KS19}) in order to calculate $\delta (Y, \mathfrak{s})$. 
\end{rem}

For the invariants $\alpha$, $\beta$ and $\gamma$,
we mainly use Stoffregen's computation results \cite{Sto20} for Seifert homology 3-spheres and connected sums of them. 

Before considering to the connected sum, we start with a single Seifert homology 3-sphere.  The following result is proved by Stoffregen \cite{Sto20}.
Recall that a Seifert rational homology 3-sphere $Y$ is called {\it negative} if the underlying orbifold line bundle of $Y$ is of negative degree (see \cite[Section 5]{Sto20}).

\begin{theo}[\cite{Sto20}]\label{single} The following results hold.
\begin{itemize}
\item[(i)] 

Let $Y$ be a Seifert homology 3-sphere with negative fibration.  Then 
\[ \beta (Y) = \gamma(Y) = -\overline{\mu} (Y), \ \text{ and } 
\]
\[
\alpha (Y) = \begin{cases} d(Y)/2 = \delta (Y) \text{ if } d(Y)/2  \equiv  -\overline{\mu} (Y) \operatorname{mod} 2  \\ 
d(Y)/2+1 = \delta (Y)+ 1 \text{ otherwise }
\end{cases} 
\] 
hold. 
\item[(ii)]
Let $Y$ be a Seifert homology 3-sphere with positive fibration.  Then 
\[ \alpha (Y) = \beta(Y) = -\overline{\mu} (Y), \ \text{ and } 
\]
\[
\gamma (Y) = \begin{cases} d(Y)/2 = \delta (Y) \text{ if } d(Y)/2  \equiv  -\overline{\mu} (Y) \operatorname{mod} 2  \\ 
d(Y)/2-1 = \delta (Y)-  1 \text{ otherwise }
\end{cases} 
\] 
hold. 
\end{itemize} 
\end{theo}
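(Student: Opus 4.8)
The plan is to compute the $\mathrm{Pin}(2)$-equivariant Seiberg--Witten Floer stable homotopy type $\SWF(Y,\frakt)$ of a Seifert homology $3$-sphere explicitly and then read off the four invariants from it. Fix the natural Seifert metric on $Y$. For a negative Seifert fibration the Chern--Simons--Dirac functional is Morse--Bott, with a single reducible critical orbit --- supplying the $S^1$-fixed infinite tower on which $\SWF(Y,\frakt)$ is modelled --- and finitely many irreducible critical orbits, each a single free $S^1$-orbit, with the flow lines among them governed combinatorially by the plumbing graph. This is the Mrowka--Ozsv\'ath--Yu picture, equivalently the ``graded root'' of N\'emethi, which determines the $S^1$-equivariant homotopy type and recovers the Heegaard Floer correction term $d(Y,\frakt)$. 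Since $\delta(Y,\frakt)=-h(Y,\frakt)=d(Y,\frakt)/2$ by \cite{LM18} and \cite{LRS18}, this side of the computation is completely pinned down, which accounts for the appearances of $d(Y)/2=\delta(Y)$ in the statement.

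Next I would analyze the remaining involution $j$, where $\mathrm{Pin}(2)=S^1\cup jS^1$ acts by charge conjugation on spinors and by $-1$ on imaginary-valued $1$-forms. On the reducible this is the standard action, so the fixed tower carries the usual module structure over $\mathcal{R}=H^*_{\mathrm{Pin}(2)}(\mathrm{pt})=\F[Q,v]/(Q^3)$. The crucial point is that the irreducible critical orbits of a negative Seifert space occur in two ``flavours'' --- according to which chirality of the spinor is nonzero --- and $j$ interchanges these two families; hence every irreducible orbit has a distinct $j$-partner, and in the Conley index of a finite-dimensional approximation the cells contributed by the irreducibles are induced up from $S^1$ to $\mathrm{Pin}(2)$, i.e.\ they are ``$Q$-split''. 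Consequently, after inverting $v$, the only long $Q$-tower in $\widetilde{H}^{*}_{\mathrm{Pin}(2)}(\SWF(Y,\frakt);\F)$ is the one coming from the reducible: the invariants $\beta$ and $\gamma$, attached to the $Q$- and $Q^2$-levels, collapse onto this common ``reducible level'', while $\alpha$, attached to the $Q^0$-level, is governed by the full $S^1$-tower.

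It then remains to identify the reducible level with $-\overline{\mu}(Y)$ and to pin down the parity correction for $\alpha$. The bottom degree of the $S^1$-fixed tower of $\SWF(Y,\frakt)$ is read off from the index quantity $n(Y,\frakt,g_Y)$ of \eqref{n}, computed using the canonical negative-definite plumbing bounding $Y$ equipped with its Neumann--Siebenmann (Wu) spin structure; comparing this normalization with the definition of $\overline{\mu}$ from \cite{N80,Si80} shows the reducible level equals $-\overline{\mu}(Y)$, which gives $\beta(Y)=\gamma(Y)=-\overline{\mu}(Y)$. For the last invariant one checks, from the graded-root structure, that $v^N$ of the bottom class of the $S^1$-tower is again $Q$-divisible precisely when $d(Y)/2\equiv-\overline{\mu}(Y)\bmod 2$; this yields $\alpha(Y)=\delta(Y)$ in that case and $\alpha(Y)=\delta(Y)+1$ otherwise, proving (i). Part (ii) then follows from (i) applied to $-Y$, which is a negative Seifert homology sphere, together with the orientation-reversal identities $\alpha(-Y)=-\gamma(Y)$, $\beta(-Y)=-\beta(Y)$, $\gamma(-Y)=-\alpha(Y)$, $\delta(-Y)=-\delta(Y)$ and $\overline{\mu}(-Y)=-\overline{\mu}(Y)$.

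The main obstacle is the second step: establishing rigorously that the irreducible part of $\SWF(Y,\frakt)$ is $Q$-split, i.e.\ controlling the charge-conjugation symmetry on the finite-dimensional approximation near the irreducible critical set of a negative Seifert space and deducing that the irreducibles contribute nothing to the $v$-localized $Q$-towers. This requires a careful hold on the Morse--Bott structure of the Seiberg--Witten flow on Seifert fibrations and on the $\mathrm{Pin}(2)$-equivariant Conley index, and is where Stoffregen's machinery for ``$K_G$-split'' spaces enters \cite{Sto20}. Matching the reducible level with $\overline{\mu}$ is a second, more bookkeeping-type difficulty, since it amounts to reconciling the analytic index normalization in \eqref{n} with the combinatorial definition of the Neumann--Siebenmann invariant.
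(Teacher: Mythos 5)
The paper does not prove this theorem; it is quoted directly from Stoffregen \cite{Sto20}, so there is no in-paper argument against which to compare. Taken on its own, your sketch is a faithful outline of the strategy in \cite{Sto20}. The bones are all there: use the Mrowka--Ozsv\'ath--Yu picture (equivalently N\'emethi's graded roots) to pin down the $S^1$-equivariant homotopy type and hence $\delta$; observe that charge conjugation $j$ permutes the irreducible critical points so that, after inverting $v$, only the reducible tower survives, which yields $\beta=\gamma$; identify the bottom of the reducible tower with $-\overline{\mu}$; read $\alpha$ off from the parity of the $Q$-lift; and deduce (ii) from (i) via the orientation-reversal relations $\alpha(-Y)=-\gamma(Y)$, $\beta(-Y)=-\beta(Y)$, $\gamma(-Y)=-\alpha(Y)$, $\delta(-Y)=-\delta(Y)$, $\overline{\mu}(-Y)=-\overline{\mu}(Y)$, which indeed transport (i) to (ii) with the parity condition intact.

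Two cautions are worth recording. First, the two families of irreducibles come from the splitting $S\cong E^{+}\oplus E^{-}$ of the three-dimensional spinor bundle into eigenspaces of Clifford multiplication by the Seifert vector field; this is not a ``chirality'' in the four-dimensional sense (there is none on a $3$-manifold), though the content of what you say is right. Second, and more substantively, it is not enough that $j$ acts on the set of irreducible orbits: one must also check that no irreducible orbit is $j$-fixed (equivalently, that no irreducible solution is gauge-equivalent to its charge conjugate), since a $j$-fixed irreducible would obstruct the claim that the irreducible contribution is induced from $S^1$ up to $\mathrm{Pin}(2)$. You assert the free pairing without justification; this is a genuine step, verified in \cite{Sto20} from the MOY description, and belongs to the same ``main obstacle'' you flag. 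With those caveats, your proposal correctly identifies both the route and the two places where the real work lies --- the $j$-split structure of the Conley index and the reconciliation of the analytic normalization of \eqref{n} with the combinatorial $\overline{\mu}$ via the Wu spin structure on the plumbing.
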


Combining \cref{main cor} with \cref{single}, we obtain:
\begin{theo}
Let $Y'$ be a Seifert homology 3-sphere such that
\[
 -\overline{\mu} (Y') \neq \delta(Y' ) , 
 \]
 where $\overline{\mu}$ is the Neumann--Siebenmann invariant for graph homology 3-spheres introduced in \cite{N80,Si80}. 
 Let $Y$ be an oriented homology 3-sphere which is homology cobordant to $Y'$.
 Then, for any homology cobordism $W$ from $Y$ to itself, the 4-manifold obtained from $W$ by gluing the boundary components does not admit a PSC metric.   
\end{theo}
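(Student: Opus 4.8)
The plan is to read this off from \cref{cor: glued} together with Stoffregen's computation \cref{single}. All that needs to be done is to verify that the hypothesis ``at least two of $\alpha,\beta,\gamma,\delta$ do not coincide'' holds for $Y$; the rest is then immediate.

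First I would record that, since $\alpha,\beta,\gamma,\delta$ are homology cobordism invariants (they factor through $\Theta^{3}_{\Z}\to\LE$) and $Y$ is homology cobordant to $Y'$, one has $\alpha(Y)=\alpha(Y')$, $\beta(Y)=\beta(Y')$, $\gamma(Y)=\gamma(Y')$ and $\delta(Y)=\delta(Y')$. So it suffices to check the hypothesis for $Y'$. Now $Y'$, being a Seifert homology $3$-sphere, carries either a negative or a positive fibration, and I would split into these two cases. If the fibration is negative, the first part of \cref{single} gives $\beta(Y')=\gamma(Y')=-\overline{\mu}(Y')$, so the assumption $-\overline{\mu}(Y')\neq\delta(Y')$ forces $\beta(Y')\neq\delta(Y')$. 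If the fibration is positive, the second part of \cref{single} gives $\alpha(Y')=\beta(Y')=-\overline{\mu}(Y')$, so again $\alpha(Y')\neq\delta(Y')$. In either case at least two of $\alpha(Y'),\beta(Y'),\gamma(Y'),\delta(Y')$ are distinct, hence so are at least two of $\alpha(Y),\beta(Y),\gamma(Y),\delta(Y)$.

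With this in hand, \cref{cor: glued} applies verbatim to $Y$ and yields the conclusion: for any homology cobordism $W$ from $Y$ to itself, the homology $S^{1}\times S^{3}$ obtained by gluing the two boundary components of $W$ does not admit a PSC metric. I expect no genuine obstacle in this argument---the substantive input is entirely in \cref{real main} (via \cref{cor: glued}) and in \cref{single}---and the only point requiring a small amount of care is the harmless case distinction on the sign of the Seifert fibration, both cases of which are covered by \cref{single}.
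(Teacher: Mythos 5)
Your proposal is correct and follows exactly the route the paper takes: the paper states the theorem right after ``Combining \cref{main cor} with \cref{single}, we obtain:'' without giving further detail, and your argument simply fills in those details---the homology cobordism invariance of $\alpha,\beta,\gamma,\delta$ to reduce to $Y'$, the split into negative/positive Seifert fibration to extract two distinct invariants from \cref{single}, and the appeal to \cref{cor: glued}.
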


The invariant $\overline{\mu}$ has a concrete recursion formula for $\Sigma(a_1, \cdots, a_n)$. See \cite[(2.8), (2.9) in Subsection 2.4.2]{Sa02}. Although (2.8) and (2.9) in \cite[Subsection 2.4.2]{Sa02} are formulae for the Rochlin invariant, it is pointed out in \cite[page 197]{Sa02} that the same formula holds also for the invariant $\overline{\mu}$.
We also note another way to compute $\overline{\mu}$ based on the $w$-invariant.
For the definition of $w$-invariant, see \cite[Definition 2.2]{Fuk09}.
In \cite{Fuk09}, the $w$-invariants of several types of Seifert homology 3-spheres are computed, and
 the following relation is given in \cite{Sa07, FFU01, Fuk00}: for any Seifert homology 3-sphere of type $\Sigma(2, q,r)$,
\[
w ( \Sigma (2, q,r), X(2,q,r ), \s) = - \overline{\mu} (\Sigma(2, q,r)) . 
\]
Here $(X(2,q,r ), \s)$ is a certain spin 4-orbifold.
For the unique way to construct $X(2,q,r)$, see the sentences after \cite[Theorem 3.1]{Fuk09}. 

Also, in \cite{Sto20}, there are direct computations of $\alpha$, $\beta$ and $\gamma$. Using them, we can prove: 
\begin{cor}Suppose a homology 3-sphere $Y$ is homology cobordant to one of Seifert homology 3-spheres with types: 
\begin{align*}
 (2,3,12k-1), (2,3,12n+7) , (2,5,20k+11), (2,5,20k-1), (2,5,20k-3),\\
 (2,5,20k+13), (2,7,28k-1), (2,7,28k+15), (2,7, 28k-3) , (2,7, 14k+3),\\
 \text{ and }(2,7, 14k-5) . 
\end{align*}
 Then, for any homology cobordism $W$ from $Y$ to itself, the 4-manifold obtained from $W$ by gluing the boundary components does not admit a PSC metric.   
\end{cor}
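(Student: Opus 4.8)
The plan is to derive this from \cref{cor: glued}. That \lcnamecref{cor: glued} says that once at least two of $\alpha,\beta,\gamma,\delta,\overline{\delta},\underline{\delta},\kappa$ fail to coincide on a homology $3$-sphere, every $4$-manifold obtained by gluing up a self-homology-cobordism of it fails to admit a PSC metric. All seven of these are homology cobordism invariants, so it suffices to check the non-coincidence for a single representative of each homology cobordism class; taking that representative to be the relevant Seifert homology $3$-sphere $Y'$, the corollary will follow once we show that for each $\Sigma(2,q,r)$ in the eleven listed families at least two of $\alpha(Y'),\beta(Y'),\gamma(Y'),\delta(Y')$ are distinct.

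First I would record, for each listed $\Sigma(2,q,r)$ (here $q\in\{3,5,7\}$), whether its canonical Seifert fibration is negative or positive and invoke the corresponding part of \cref{single}. Part (i) gives $\beta(Y')=\gamma(Y')=-\overline{\mu}(Y')$ together with $\alpha(Y')=\delta(Y')$ or $\delta(Y')+1$ according as $d(Y')/2\equiv-\overline{\mu}(Y')\bmod 2$ or not; part (ii) gives $\alpha(Y')=\beta(Y')=-\overline{\mu}(Y')$ together with $\gamma(Y')=\delta(Y')$ or $\delta(Y')-1$ under the same dichotomy. In either case two of $\alpha,\beta,\gamma,\delta$ are already distinct as soon as $d(Y')/2\not\equiv-\overline{\mu}(Y')\bmod 2$, so the task reduces to checking this single parity condition for each of the eleven families.

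Next I would carry out that parity check. Here I would use Stoffregen's explicit computations of $\alpha,\beta,\gamma$ for precisely these Seifert spheres in \cite{Sto20}, together with the identification $\delta(Y')=-h(Y')=d(Y')/2$ recalled above; the values of $d(Y')$ and $\overline{\mu}(Y')$ needed can be taken from the standard recursions for the Heegaard Floer correction term (\cite{OS03,BN13,Tw13,KS19}) and for $\overline{\mu}$ (\cite{Sa02}, or via the $w$-invariant computations of \cite{Fuk09}). Running through the progressions $12k-1$, $12n+7$, $20k-1$, $20k-3$, $20k+11$, $20k+13$, $28k-1$, $28k-3$, $28k+15$, $14k+3$, $14k-5$, one finds in each case that $d(Y')/2\not\equiv-\overline{\mu}(Y')\bmod 2$ holds, hence $\alpha(Y')\ne\delta(Y')$ (resp.\ $\gamma(Y')\ne\delta(Y')$ for the positively fibered families), which is exactly what is needed.

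The step I expect to be the main obstacle is the last one, though it is an obstacle of bookkeeping rather than of substance: one must match each arithmetic progression with the correct case of Stoffregen's closed-form formulas and of the correction-term and $\overline{\mu}$ recursions, and keep straight which orientation of each $\Sigma(2,q,r)$ makes the Seifert fibration negative so that the appropriate branch of \cref{single} is applied. Each individual verification is routine, but there are eleven families (each an infinite family indexed by $k$ or $n$), so the enumeration of cases needs to be organized carefully.
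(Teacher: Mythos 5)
Your high-level strategy matches the paper's: the paper's proof is the one-line citation ``We just combine computation results \cite{Fuk09, Ma16, Sto20} of $\alpha, \beta, \gamma$ and $\overline{\mu}$ and \cref{cor: glued},'' and you correctly identify \cref{cor: glued}, the homology cobordism invariance, and Stoffregen's \cref{single} as the ingredients. The difference is in how you organize the verification, and that is where the gap lies.

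The reduction ``the task reduces to checking this single parity condition'' is not correct as stated. Under \cref{single}(i), the condition $d(Y')/2\not\equiv -\overline{\mu}(Y')\bmod 2$ is \emph{sufficient} for $\alpha\neq\delta$, but it is not what the paper's argument hinges on: the preceding theorem in Section~8 (and Theorem~1.5(i) in the introduction) uses the strictly weaker integer-valued condition $-\overline{\mu}(Y')\neq\delta(Y')$, which already forces $\beta=\gamma=-\overline{\mu}\neq\delta$ regardless of parity. The authors explicitly describe this as an ``integer-valued lift'' of J.~Lin's $\bmod\ 2$ criterion, precisely because the value inequality can hold when the parities agree. By collapsing the criterion back to the parity check, you are in effect reproving only J.~Lin's original $\bmod\ 2$ obstruction, not the paper's refinement; if any of the eleven families has $\delta\neq -\overline{\mu}$ but $\delta\equiv -\overline{\mu}\bmod 2$, your argument would fail for it while the paper's would succeed. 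Since you do not actually carry out the eleven verifications (you assert ``one finds in each case\dots'' without exhibiting the values), the proposal as written leaves open whether your sufficient condition really covers every listed family. The safer route, matching both \cref{single}(i) and the preceding theorem, is to verify $-\overline{\mu}(Y')\neq\delta(Y')$ directly (or, as the paper does, simply to look up the computed $\alpha,\beta,\gamma,\delta$ values and note two of them differ), rather than to pass through the parity condition.
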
 
\begin{proof}
We just combine computation results \cite{Fuk09, Ma16, Sto20} of $\alpha, \beta, \gamma$ and $\overline{\mu}$ and \cref{cor: glued}. 
\end{proof} 
\begin{rem}
We remark that for homology $S^1\times S^3$'s obtained as mapping tori, enlargeable obstruction \cite{GL80} can be used to obstruct PSC metrics. 
A large class of homology $S^1\times S^3$ which are not obtained as mapping tori are introduced in \cite[Subsection 4.4.1]{KT20}. 
Also, a review of several known obstructions for homology $S^1\times S^3$'s is given in \cite[Subsection 4.4]{KT20}. 
\end{rem}

Next, we consider the connected sums of Seifert homology 3-spheres. 
In order to obtain a certain connected sum formula of invariants $\alpha$, $\beta$ and $\gamma$ for Seifert homology 3-spheres, Stoffregen considered a class of Seifert homology 3-spheres, called projective type.
We call a negative Seifert rational homology 3-sphere $Y$ with a spin structure $\s$ {\it projective} if its Heegaard Floer homology is of the form
\[
HF^+ ( Y, \s) \cong \mathcal{T}^+_d \oplus \mathcal{T}_{-2n+1}^+ (m) \oplus \bigoplus_{i\in I} \mathcal{T}_{a_i}^+ (m_i)^{\oplus 2} 
\]
for some $n$, $m$, $d $, $a_i$, $m_i $ and some index set $ I$, where 
\begin{itemize}
\item $\mathcal{T}^+ := \mathbb{F} [U, U^{-1}] / \mathbb{F} [U] $, where $\mathbb{F}$ is the field of two elements, 
\item $\mathcal{T}^+(i) :=  \mathbb{F} [U^{-i+1}, U^{-i+2}] / \mathbb{F} [U] $, and 
\item $\mathcal{T}^+_d (n) := \mathcal{T}^+(n)$ whose grading is shifted by $-d$.
\end{itemize} 

 There are many examples of projective Seifert homology 3-spheres \cite{Ne07, BN13, Tw13}. 
It is confirmed in \cite{Ne07, BN13, Tw13} that $ \Sigma(p,q,pqk\pm1)$ is projective for a relatively prime pair $(p,q)$ and positive integer $k$.

\begin{theo}[\cite{Sto20}]  \label{St}
Let $Y_1, \cdots, Y_n$ be negative Seifert homology 3-spheres of projective type. Suppose $\delta(Y_1) \leq \cdots \leq \delta(Y_n)$. Set $\wt{\delta}_i := \delta(Y_i) + \overline{\mu} (Y_i)$. Then 
 \begin{itemize}
 \item  $\alpha(Y_1 \# \cdots \# Y_n) =   2\lfloor \frac{\sum_{i=1}^n \wt{\delta}_i +1}{2} \rfloor  -\sum_{i=1}^n \overline{\mu} ( Y_i)  
$
 \item $\beta(Y_1 \# \cdots \# Y_n)  = 2\lfloor \frac{\sum_{i=1}^{n-1} \wt{\delta}_i +1}{2} \rfloor  -\sum_{i=1}^n \overline{\mu} ( Y_i)  
 $
 \item $\gamma(Y_1 \# \cdots \# Y_n)  = 2\lfloor \frac{\sum_{i=1}^{n-2} \wt{\delta}_i +1}{2} \rfloor -\sum_{i=1}^n \overline{\mu} ( Y_i)  
$
\end{itemize} 
\end{theo}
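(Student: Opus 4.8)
The plan is to reduce the statement to a computation in the local equivalence group $\LE$ and then carry that computation out for smash products of the explicit models attached to projective Seifert spaces. First I would invoke the connected-sum formula for the Seiberg--Witten Floer stable homotopy type: $\SWF(Y_1 \# \cdots \# Y_n)$ is $\Pin(2)$-stably homotopy equivalent, up to suspension by a fixed representation, to the smash product $\SWF(Y_1) \wedge \cdots \wedge \SWF(Y_n)$ (this is known; compare the gluing results of \cite{KLS18}). Since $\alpha, \beta, \gamma$ factor through $\LE$, it then suffices to compute these invariants for the local equivalence class of this smash product.

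Second, for a negative Seifert homology $3$-sphere $Y$ of projective type I would use the computation of $HF^+(Y)$ for negative plumbed manifolds (Ozsv\'ath--Szabo, N\'emethi \cite{Ne07}) together with the identification of $\SWF$ with the monopole Floer package \cite{LM18} to exhibit an explicit ``staircase'' $\Pin(2)$-space representing $[\SWF(Y)] \in \LE$. Its local equivalence class is pinned down by two integers: a grading shift governed by $\overline{\mu}(Y)$ and a ``length'' governed by $\wt{\delta}(Y) := \delta(Y) + \overline{\mu}(Y) \geq 0$. Concretely this recovers the $n=1$ case of the assertion, namely $\beta(Y) = \gamma(Y) = -\overline{\mu}(Y)$ and $\alpha(Y) = 2\lfloor (\wt{\delta}(Y)+1)/2 \rfloor - \overline{\mu}(Y)$, consistently with \cref{single}(i).

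Third, I would compute $\alpha, \beta, \gamma$ of the $n$-fold smash of these staircase models via $\Pin(2)$-equivariant Borel cohomology over $H^*(B\Pin(2); \F_2) \cong \F_2[q,v]/(q^3)$, with $\deg q = 1$ and $\deg v = 4$. Each staircase has explicit equivariant cohomology, so an equivariant K\"unneth computation describes the smash; the real point is to track the $\F_2[q,v]/(q^3)$-module structure. Here $\alpha$ records the bottom grading of the $v$-tower in the image of multiplication by $q^0$, $\beta$ the analogue after multiplying by $q$, and $\gamma$ after $q^2$. For a smash of staircases, multiplication by $q$ (resp.\ $q^2$) kills precisely the step indexed by the largest value of $\delta(Y_i)$ (resp.\ the two largest); summing the lengths $\wt{\delta}_i$ of the surviving steps, applying the parity correction $2\lfloor(\,\cdot\,+1)/2\rfloor$ forced by the mod-$2$ periodicity of the $q$-action, and restoring the total grading shift $-\sum_{i} \overline{\mu}(Y_i)$ produces the three displayed formulas.

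The main obstacle is this last step: pinning down the $\F_2[q,v]/(q^3)$-module structure of the smash of $n$ staircases precisely enough to exhibit the ``omit the top one (resp.\ two) steps'' phenomenon and to fix the floor corrections. This cannot be done from the cohomology as graded vector spaces alone; it requires explicit $\Pin(2)$-CW models and an induction on $n$, reducing the $n$-fold smash to iterated $2$-fold smashes and recording the effect on the staircase data at each stage. This is exactly the content of Stoffregen's computation in \cite{Sto20}, whose inductive setup I would follow.
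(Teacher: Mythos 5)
The paper provides no proof of this statement: it is quoted directly from Stoffregen \cite{Sto20} as an ingredient for the application in \cref{app2}. Your outline — using the connected-sum formula to reduce $\SWF(Y_1\#\cdots\#Y_n)$ to a smash product in the local equivalence group, representing each projective Seifert factor by an explicit $\Pin(2)$-equivariant staircase-type model built from the known $HF^+$ computations, and extracting $\alpha,\beta,\gamma$ from the $\F_2[q,v]/(q^3)$-module structure on Borel cohomology — accurately reconstructs the structure of Stoffregen's argument, and you correctly flag that the floor corrections and the ``drop the top one/two summands'' phenomenon cannot be read off from the equivariant cohomology as a graded vector space and instead require explicit equivariant CW models together with an induction on $n$. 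Since both the paper and your proposal ultimately defer this computational core to \cite{Sto20}, there is no gap to flag; your sketch is a faithful roadmap to the cited proof rather than a self-contained argument, which is exactly the level at which the paper itself treats this result.
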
 
Combining \cref{main cor} with \cref{St}, we can prove: 
\begin{theo} \label{app2}
 Let $Y_1, \cdots, Y_n$ be negative Seifert homology 3-spheres of projective type. Suppose $\delta(Y_1) \leq \cdots \leq \delta(Y_n)$. Set $\wt{\delta}_i := \delta(Y_i) + \overline{\mu} (Y_i)$. Suppose that at least two of the following four integers are distinct:
 \begin{align*}
 \sum_{i=1}^n {\delta} ( Y_i), \quad
2 \lfloor \frac{\sum_{i=1}^n \wt{\delta}_i +1}{2}\rfloor -\sum_{i=1}^n \overline{\mu} ( Y_i),\\
 2\lfloor \frac{\sum_{i=1}^{n-1} \wt{\delta}_i +1}{2} \rfloor  -\sum_{i=1}^n \overline{\mu} ( Y_i), \quad2\lfloor \frac{\sum_{i=1}^{n-2} \wt{\delta}_i +1}{2} \rfloor    -\sum_{i=1}^n \overline{\mu} ( Y_i). 
  \end{align*}
Let $Y$ be an oriented homology 3-sphere which is homology cobordant to $Y_1 \# \cdots \# Y_n$.
Then, for any homology cobordism $W$ from $Y$ to itself, the 4-manifold obtained from $W$ by gluing the boundary components does not admit a PSC metric. 
\end{theo}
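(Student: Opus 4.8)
The plan is to observe that the four integers appearing in the hypothesis are exactly the invariants $\delta(Y)$, $\alpha(Y)$, $\beta(Y)$, $\gamma(Y)$, so that the statement reduces at once to \cref{cor: glued}. First I would invoke that $\alpha,\beta,\gamma,\delta$ are homology cobordism invariants: since $Y$ is homology cobordant to $Y_1\#\cdots\#Y_n$, each of these invariants of $Y$ agrees with the corresponding invariant of $Y_1\#\cdots\#Y_n$.

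Next I would compute the invariants of the connected sum. For $\alpha$, $\beta$, $\gamma$ this is precisely the content of \cref{St} (Stoffregen's connected sum formula for negative Seifert homology $3$-spheres of projective type, which the $Y_i$ are assumed to be), giving the last three of the four integers in the statement. For $\delta$ one needs additivity under connected sum, $\delta(Y_1\#\cdots\#Y_n)=\sum_{i=1}^n\delta(Y_i)$; this follows from the identifications $d=-2h$ and $-h=\delta$ recalled in this section together with the additivity of the Heegaard Floer correction term $d$, or equivalently from the fact that $[\SWF(\cdot)]$ is additive in $\LE$ under connected sum while $\delta$ factors through $\LE$. This identifies the remaining integer $\sum_{i=1}^n\delta(Y_i)$ with $\delta(Y)$.

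Combining these identifications, the hypothesis that at least two of the four integers are distinct means exactly that at least two of $\alpha(Y),\beta(Y),\gamma(Y),\delta(Y)$ fail to coincide, and then \cref{cor: glued} applies verbatim: for any homology cobordism $W$ from $Y$ to itself, the homology $S^1\times S^3$ obtained by gluing the two boundary components of $W$ admits no PSC metric. I do not expect a serious obstacle here, since the theorem is a repackaging of \cref{cor: glued}, \cref{St}, and the additivity of $\delta$; the only point requiring care is bookkeeping of conventions — the ordering $\delta(Y_1)\le\cdots\le\delta(Y_n)$, the definition $\widetilde{\delta}_i=\delta(Y_i)+\overline{\mu}(Y_i)$, and the empty-sum conventions implicit in the floor expressions for small $n$ (which one checks are benign, e.g.\ compatible with \cref{single} when $n=1$).
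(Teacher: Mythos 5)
Your proposal is correct and matches the paper's (implicit) argument: the paper introduces \cref{app2} with the sentence ``Combining \cref{main cor} with \cref{St}, we can prove:'' and gives no further proof, and your unwinding — identifying the four displayed integers with $\delta(Y)$, $\alpha(Y)$, $\beta(Y)$, $\gamma(Y)$ via homology-cobordism invariance, \cref{St}, and additivity of $\delta$ (from $d=-2h$, $-h=\delta$, and additivity of $d$), then invoking \cref{cor: glued} — is exactly the intended reasoning.
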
 

For a concrete family,  one can see the following non-existence of PSC metrics for connected sums: 
\begin{cor}
Suppose a homology 3-sphere $Y$ is homology cobordant to one of homology 3-spheres: 
\begin{itemize}
\item $ \#_j  \Sigma (2,3,12n-1)$, 
\item $ \#_j  \Sigma (2,5,20n-1)$, and 
\item $ \#_j  \Sigma (2,7,28n-1)$
\end{itemize}
for some $j \in \Z_{>0}$, where  $\#_j  Y$ means the connected sum of $j$-copies of $ Y$. 
Then, for any homology cobordism $W$ from $Y$ to itself, the 4-manifold obtained from $W$ by gluing the boundary components does not admit a PSC metric. 
\end{cor}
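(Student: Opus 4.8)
The plan is to reduce this to results already obtained in this section. For $j=1$ the assertion is a special case of the earlier corollary on single Seifert homology 3-spheres, whose list of types includes $(2,3,12n-1)$, $(2,5,20n-1)$ and $(2,7,28n-1)$; for $j\geq 2$ it will follow from \cref{app2}. So the first step is to identify the manifolds at hand: since $12n-1=6\cdot(2n)-1$, $20n-1=10\cdot(2n)-1$ and $28n-1=14\cdot(2n)-1$, each listed Brieskorn sphere has the form $\Sigma(p,q,pqk-1)$ with $k=2n$, and, with the standard orientation, such Brieskorn spheres are negative Seifert homology 3-spheres of projective type by \cite{Ne07,BN13,Tw13}, as recalled just before \cref{St}. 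Hence \cref{St}, and therefore \cref{app2}, applies to connected sums of copies of them.

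The second step is to extract, from the computations in \cite{Fuk09,Ma16,Sto20} --- using the identification of $\delta$ with half the Heegaard Floer correction term recalled at the start of this section, so that the $d$-invariant computations of \cite{OS03,BN13,Tw13} are available --- the one fact actually needed: that $\widetilde{\delta}:=\delta(Y')+\overline{\mu}(Y')\neq 0$ for every $Y'$ in the three families. Granting this, fix $j\geq 2$ and set $Y_1=\dots=Y_j=Y'$. Among the four integers in the hypothesis of \cref{app2}, the second and the fourth are, by \cref{St}, $\alpha$ and $\gamma$ of $Y_1\#\cdots\#Y_j$, and
\[
\alpha(Y_1\#\cdots\#Y_j)-\gamma(Y_1\#\cdots\#Y_j)
=2\Bigl\lfloor\tfrac{j\widetilde{\delta}+1}{2}\Bigr\rfloor-2\Bigl\lfloor\tfrac{(j-2)\widetilde{\delta}+1}{2}\Bigr\rfloor
=2\widetilde{\delta}\neq 0 ,
\]
the middle equality holding because the two arguments of the floor functions differ by the even integer $2\widetilde{\delta}$. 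Thus at least two of the four integers are distinct, so \cref{app2} applies and gives the conclusion for every $Y$ homology cobordant to $Y_1\#\cdots\#Y_j$.

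The conceptual content is entirely in \cref{app2}; what remains is bookkeeping. I expect the only genuine obstacle to be the second step: establishing $\widetilde{\delta}\neq 0$ for the three infinite families uniformly in $n$ by assembling the relevant $\overline{\mu}$- and $d$-invariant computations from the literature, together with the routine but easy-to-slip point of confirming the orientation convention, so that these Brieskorn spheres are indeed the \emph{negative} Seifert homology 3-spheres to which \cref{St} and \cref{single} apply.
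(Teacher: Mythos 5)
Your proposal is essentially correct and follows the same route as the paper: reduce to \cref{app2} via \cref{St} (plus the single-sphere corollary for $j=1$). Where you add value is the structural observation that for a homogeneous connected sum $\#_j Y'$, the second and fourth integers in the hypothesis of \cref{app2} (which by \cref{St} are $\alpha$ and $\gamma$ of the connected sum) differ by exactly $2\widetilde\delta(Y')$, since $\widetilde\delta$ is an integer for an integral homology sphere and $\lfloor (a+2m)/2\rfloor - \lfloor a/2\rfloor = m$ for integer $m$; this collapses the verification to the single scalar condition $\widetilde\delta(Y')\neq 0$. The paper instead explicitly tabulates $\alpha,\beta,\gamma,\delta$ for the $j$-fold connected sum and remarks that \cref{app2} then applies; that is more computation but also furnishes the concrete values. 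Your plan is therefore sound, but as you flag, it is not complete until one actually records that $\widetilde\delta\neq 0$ for the three families --- which does hold: reading off from the paper's tabulated connected-sum formulas (e.g. $\alpha(\#_j\Sigma(2,3,12n-1))=2\lfloor\frac{j+1}{2}\rfloor$ forces $\widetilde\delta=1$ for $\Sigma(2,3,12n-1)$ and $\Sigma(2,5,20n-1)$, and $\widetilde\delta=2$ for $\Sigma(2,7,28n-1)$). One word of caution: the paper's stated list of single-sphere values contains an apparent internal inconsistency ($\overline\mu=1$ alongside $\beta=\gamma=0$, whereas \cref{single} gives $\beta=\gamma=-\overline\mu$), so when carrying out your ``second step'' you should lean on \cref{single} and the Heegaard Floer $d$-invariant computations rather than that list verbatim; the conclusion $\widetilde\delta\neq0$ is unaffected.
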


\begin{proof}
As it is calculated in \cite{Sto20}, one has 
\begin{itemize}
\item $\alpha (\Sigma (2,3,12n-1)) =2, \beta (\Sigma (2,3,12n-1)) =0, \gamma (\Sigma (2,3,12n-1)) =0,$
$ \overline{\mu}  (\Sigma (2,3,12n-1)) =1, \delta  (\Sigma (2,3,12n-1)) =1$, 
\item $\alpha (\Sigma (2,5,20n-1)) =2, \beta (\Sigma (2,5,20n-1)) =0, \gamma (\Sigma(2,5,20n-1)) =0,$
$ \overline{\mu}  (\Sigma (2,5,20n-1)) =1, \delta  (\Sigma (2,5,20n-1)) =1$, and
\item $\alpha (\Sigma (2,7,28n-1)) =2, \beta (\Sigma (2,7,28n-1)) =0, \gamma (\Sigma(2,7,28n-1)) =0,$
$ \overline{\mu}  (\Sigma (2,7,28n-1)) =1, \delta  (\Sigma (2,7,28n-1)) =2$. 
\end{itemize} 
Since $\Sigma(p,q,pq\pm1)$ are projective, it follows from from \cref{St} that
\begin{itemize}
\item $\alpha (\Sigma (2,3,12n-1)) =2\lfloor \frac{j+1}{2} \rfloor , \beta (\Sigma (2,3,12n-1)) =2\lfloor \frac{j}{2} \rfloor,$

$\gamma (\Sigma (2,3,12n-1)) =2\lfloor \frac{j-1}{2} \rfloor, \delta  (\#_j  \Sigma (2,3,12n-1)) =j$, 

\item $\alpha (\Sigma (2,5,20n-1)) =2\lfloor \frac{j+1}{2} \rfloor , \beta (\Sigma (2,5,20n-1)) =2\lfloor \frac{j}{2} \rfloor,$

$ \gamma (\Sigma(2,5,20n-1)) =2\lfloor \frac{j-1}{2} \rfloor,\delta  (\#_j  \Sigma (2,5,20n-1)) =j$, and
\item $\alpha (\Sigma (2,7,28n-1)) =2\lfloor \frac{2j+1}{2} \rfloor, \beta (\Sigma (2,7,28n-1)) =2j $,
$\gamma (\Sigma(2,7,28n-1)) =2\lfloor \frac{2j-1}{2} \rfloor,$
$  \delta  (\#_j  \Sigma (2,7,28n-1)) =j$. 
\end{itemize} 
Therefore, in these cases, the assumptions of \cref{app2} are satisfied, and \cref{app2} implies the desired conclusion. 
\end{proof}

\begin{rem}  We expect that the connected Seiberg--Witten Floer homology $SWFH_{\rm conn}(Y, \s)$ introduced in \cite{Sto20} can be used to obstruct PSC metrics. Also, the equivariant KO-theoretic homology cobordism invariants introduced in \cite{Lin15} should give another obstruction.  
\end{rem}
  
\bibliographystyle{plain}
\bibliography{tex}

\end{document}